\documentclass[10pt,english,a4paper]{smfart}
\usepackage[margin=2.8cm,footskip=1cm]{geometry}
 
\usepackage{amsfonts}
\usepackage{phoenician} %don't worry 'bout it
\usepackage{runic}
\usepackage{amsmath}
\usepackage{amsthm}
\usepackage{dsfont}
\usepackage{comment}
\usepackage{amssymb}
\usepackage{xcolor}
\usepackage{hyperref}
\usepackage{breqn}
\usepackage{subcaption}
\usepackage{enumitem}
\usepackage{stmaryrd}
\usepackage{graphicx}
\usepackage[
backend=biber,
style=alphabetic,
sorting=ynt
]{biblatex}
\usepackage{bigints}
\newtheorem{theorem}{Theorem}[section]
\newtheorem{corollary}{Corollary}[section]
\newtheorem{proposition}{Proposition}[section]
\newtheorem{lemma}{Lemma}[theorem]
\theoremstyle{definition}
\newtheorem{definition}{Definition}[section]
\theoremstyle{remark}
\newtheorem{remark}{Remark}

\newcommand{\R}{\mathbb{R}}

\newcommand{\N}{\mathbb{N}}

\newcommand{\rs}{\textbf{RS}}
\newcommand{\rb}{(\alpha_{sc}, \beta_{sc})}
\newcommand{\Leb}{\mathrm{Leb}}

\newcommand{\landsc}{\mathrm{L}}
\newcommand{\slope}{\mathrm{t}}
\newcommand{\Half}{\R \times \R_+}
\newcommand{\below}[3]{\mathrm{below}\left(#1, #2, #3\right)}
\newcommand{\belowa}[2]{\mathrm{below}\left(#1, #2\right)}
\newcommand{\primi}{\mathfrak{F}}
\newcommand{\event}{\Xi}
\newcommand{\graph}{\mathcal{G}}
\newcommand{\zone}{\mathrm{Z}}
\newcommand{\placehold}{\mathrm{w}}

\newcommand{\p}{\mathbb{P}}
\newcommand{\e}{\mathbb{E}}
\renewcommand{\epsilon}{\varepsilon}
\renewcommand{\max}{\text{max}}

\author{Paul Rax}
\address{MathNet Team, INRIA Paris}
\email{paul-pierre.rax@inria.fr}
\urladdr{\href{https://paul-rax.github.io/webpage/}{paul-rax.github.io/webpage}}

\author{François Baccelli}
\address{DI ENS \& MathNet Team, INRIA Paris}
\email{Francois.Baccelli@ens.fr}
\urladdr{}

\title[The Relay Random Tree]{The Relay Random Tree:\\A stochastic geometry approach of multihop relay in an urban visibility setting.}

\begin{document}

\maketitle

\begin{abstract}
    In a recent work (\cite{Junse_Francois}), a one dimensional stochastic geometry model was introduced to study Line of Sight (LoS) connections using Reconfigurable Intelligent Surfaces (RIS), in the context of non terrestrial networks. In this model, signal can be propagated in a urban environment, with buildings acting as obstacles with RIS (which, for the scope of this present article can essentially be thought of as relays) on their rooftops, relaying the connection. The present paper extends this model by both allowing arbitrary distributions for the buildings heights, and considering multi-hop connections. Those generalities also lead to considering structural problems linked to the total load of a relay. Furthermore, studying this Line of Sight connection geometry at the light of geometric random graph theory, we show that it constitutes a computationally well understood example that highlights the different classes of the Eternal Family Trees (EFTs) classification (\cite{EFT}).
\end{abstract}
\tableofcontents
\section{Introduction}
Over the recent years, satellite networks and their geometric modeling have been the object of a growing interest (starting from \cite{okati_stochastic_2020}). However, if most models concern the \textit{long range} geometry of the problem, that is focusing on the geometry of the satellites (such as the spherical cap seen by a user), the environment near a terrestrial user can be of crucial interest, as obstacles near a user can impact the connection to a satellite. Using stochastic geometry tools, Baccelli and Lee (\cite{Junse_Francois}) recently proposed a model of blockage, and studied the possible uses of Reconfigurable Intelligent Surfaces (RIS, \cite{di_renzo_reconfigurable_2020}) to cope with them within this framework. We propose here to continue the study of this model, extending some of its results to more general distributions, and to questions linked with the dimensioning of the network. While doing so, we also link this model to the known classification of covariant vertex shifts (\cite{EFT}, \cite{CVPS}), as it constitute an example where this qualitative behaviour can be complemented with exact computations.
Moreover, while the main motivation stems from satellite networks, and the potential uses of RIS, our work enters in the scope of the study of line of sight and obstacle models in telecommunication (\cite{INFOCOM}).

Let us now give a brief overview of the model and results of \cite{Junse_Francois}.

In that model, a user on the line $\R$ sees building as vertical segments on the same line. A blocking building is seen as the one that blocks its line of sight upward, that is the one with the top of which the user forms the largest angle (the so called \textit{visibility angle}). Considering that there are RISs on top of the buildings, the authors study how much one can decrease this angle through the use of either a transmissive or a reflexive process.

Formally, a user $o$ is placed at the origin of the line $\R$. On the rest of the line, a PPP of intensity $\lambda >0$, endowed with i.i.d marks following a Weibull distribution with shape parameter $k$ and scale parameter $\frac{1}{\mu}$ is generated, to model buildings through their position (point process) and height (marks). In this setting, they consider the \textit{right blockage angle} $\Theta^+$ as the greatest angle formed by $o$, the line $\R$, and the top of one of the buildings on the right of $o$. This corresponds to the arc tangent of 
\[\underset{(x, h)\text{ building}}{\max} \frac{h}{x}.\]
The \textit{left blockage angle} is defined similarly. Under the hypotheses on the buildings and their heights, this is always well defined, and so are the corresponding \textit{blocking buildings}, as the unique buildings $(X_+, H_+)$ and $(X_-, H_-)$ achieving the said maxima.\\

\begin{figure}[h]\label{Situ_Junse_Francois}
            \centering
            \includegraphics[width=0.75\textwidth]{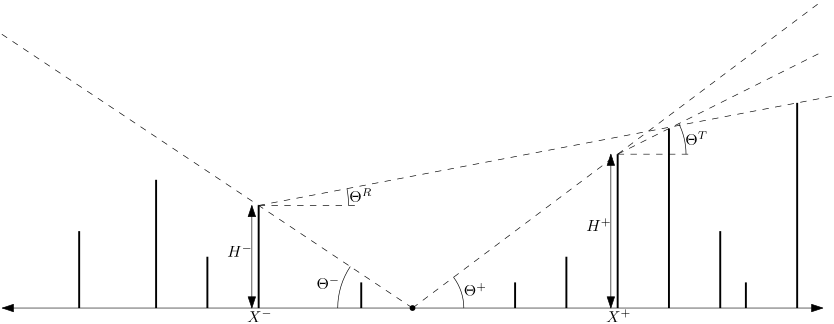}
            \caption{The situation considered in \cite{Junse_Francois}.}
\end{figure}

From this situation, two angles are then defined :
\begin{itemize}
    \item The \textit{transmissive blockage angle} $\Theta^T$, which is the right blockage angle seen from $(X^+, H^+)$, or in other words the enhanced blockage angle obtained through the use of transmissive $RIS$ on the right blocking building.
    \item The \textit{reflexive blockage angle} $\Theta^R$, which is right blockage angle seen from $(X^-, H^-)$, that is the enhanced blockage angle given by the use of reflexive RIS on the top of the left blocking building. 
\end{itemize}

The goal of the present paper is to further study this model in the transmissive case, generalising on the one hand the framework, allowing any kind of height distribution, and looking at different types of relays, and on the other hand extending the questions, by looking at multihop relaying and studying the geometry it induces.

\subsection*{Contributions}

Throughout this article, we extend the results and setting of the transmissive case study of \cite{Junse_Francois}:
\begin{itemize}
    \item In this context, we study the multihop case, giving distribution of the evolution of the successive position and height of the blockage buildings as well as blockage angles. Our methods also extend the model to accommodate any height distribution as well as in-homogeneous Poisson point process for the positions of the buildings.
    \item Still in the above context, we also give precise results on the distribution of the zone of users and buildings that eventually connect to a given building.
    \item We extend our study to the case where the transmissions are limited in range, where we manage to extend our methods and results to some specific ranges (where only the horizontal distance between the building matters, which can be seen as an approximation for dense urban settings), but where they ultimately fail in the general case.
    \item In the last case however we manage to find the distribution of the first blocking building of any given point.
\end{itemize}

Doing so, we highlight the structural properties of the multihop process: we find that, except for the last, general finite range, case, the Poisson point process structure of the buildings yields a Markovian evolution of blockage.

Moreover, in the stationary case, we study the underlying unimodular random tree, the Relay EFT, that gives the structure of the connections in this scheme. Thanks to our analysis, it constitutes a computationally well understood example of unimodular random tree emerging from a point-map. This unimodularity is central in our study as leveraging it is one of the ways we have to recover results in finite range.
\subsection*{Structure of the paper}

First, in Section \ref{model}, we introduce our framework, some basic properties, as well as the quantities of interest in our study. Then, in Section \ref{multihop}, we study the behaviour of a multihop infinite range scheme, that is the direct generalisation of the study of the transmissive enhancement of \cite{Junse_Francois} to the case where multiple successive enhancements are allowed. We give the explicit joint law of the evolution of position and height of the successive blocking buildings, as well as blockage angle, showing that it exhibits a Markovian behaviour. We also briefly prove the intuitive almost-sure convergence of the height, position and angle. In Section \ref{tree}, we then study the geometry generated by such a scenario, using it to answer load questions related to the number of users being relayed by a given RIS, by giving explicit computations and characterisations in this context. Finally, in Section \ref{finite_range}, we extend our methods to the finite range case, where relays are limited in their range.

\subsection*{Acknowledgements.}

The question of multi-hop communications in the context of the model discussed here was raised by Dr. Junse Lee of Sungshin University in Seoul. The authors are grateful for this question of his which is at the origin of the present paper.

\section{Model: definition and first results}\label{model}

Let us first define our model, as well as the notation and vocabulary which we use for the rest of the article.\\
First, we introduce the geometric objects we are interested in when the buildings are seen as marked points on the line.

\subsection{Geometry of the problem}

\begin{definition}\textbf{Landscape, buildings.}

    Let us consider the half plane $\R \times \R_+$. Unless specified otherwise, the term \textit{point} will refer to an element of $\Half$.\\
    A \textit{landscape} $\landsc$ is a set of points $\landsc = {(x_i, h_i)}$ of $\R \times \R_+$, where the set $\{x_i\}$ is locally finite, and which we call \textit{buildings}. We interpret the building $(x, h)\in \landsc$ as the vertical segment from $(x, 0)$ to $(x, h)$. For $(x, y)\in \landsc$ a building, we call $x$ its base, and $y$ its height.\\
\end{definition}
In this setting, the crucial element is \textit{visibility}: we say that a building $(x, h)\in \landsc$ is visible from a point $(a, b)$ if there is no other building of $\landsc$ above the segment $[(a, b),(x, y)]$. This definition changes in the finite range case, but in the current, infinite-range, case, it justifies the definition of the \textit{blocking building}, the furthest visible building, as the locus of a global maximum:
\begin{definition}\textbf{Blockage.}

    For $\landsc$ a landscape and $(x, y)\in \Half$ a point, we call (first) blocking building of $(x, y)$ in $\landsc$ the following quantity when it is well defined:
    \[\tau(x, y, \landsc) = (x_b, y_b) := \underset{\substack{(a, b) \in \landsc \\a>x}}{\text{argmax}} \frac{b-y}{a-x}.\]
    When the blocking building of $(x, y)$ is well defined, we further introduce its blockage angle as:
    \[\theta(x, y, \landsc): = \arctan\left(\frac{y_b - y}{x_b-x}\right).\]
    In the following, when the involved landscape is unequivocal, we will simply write $\tau(x, y)$.
\end{definition}

\begin{remark}
Rather than the blockage angle, we are more interested in its tangent, which we call the \textit{blockage slope}:
\[\slope(x, y, \landsc): = \frac{y_b - y}{x_b-x}.\]
\end{remark}
\begin{remark}
In the following, we call \textit{angle} between (or angle formed by) two points of the half plane $\Half$ the angle between the horizontal line $\R \times {0}$ and the line induced by these two points. Moreover, we endow $\Half$ with the lexicographical order: when comparing points, we in fact compare their $x-$coordinate, and then their $y-$coordinate.
\end{remark}

An important viewpoint of this situation consists in formulating the problem in terms of \textit{shade}:

\begin{definition}\textbf{Shade.}

Let $\landsc$ be a landscape, $(x_1, y_1)$ a point of $\Half$, and $(a, b)\in \landsc$ a building. We call the \textit{shade} of $(a, b)$ from $(x_1, y_1)$ the set of points right to $a$ that form a lower angle with $(x_1, y_1)$ than $(a, b)$ does:
\[S^{(x_1, y_1)}_{(a, b)} = \left\{ (x, y)\in \Half:x \geq a,\  \frac{y-y_1}{x-x_1}\leq \frac{b-y_1}{a-x_1}\right\}.\]
\end{definition}

\begin{figure}[h]\label{shade_fig}
            \centering
            \includegraphics[width=0.75\textwidth]{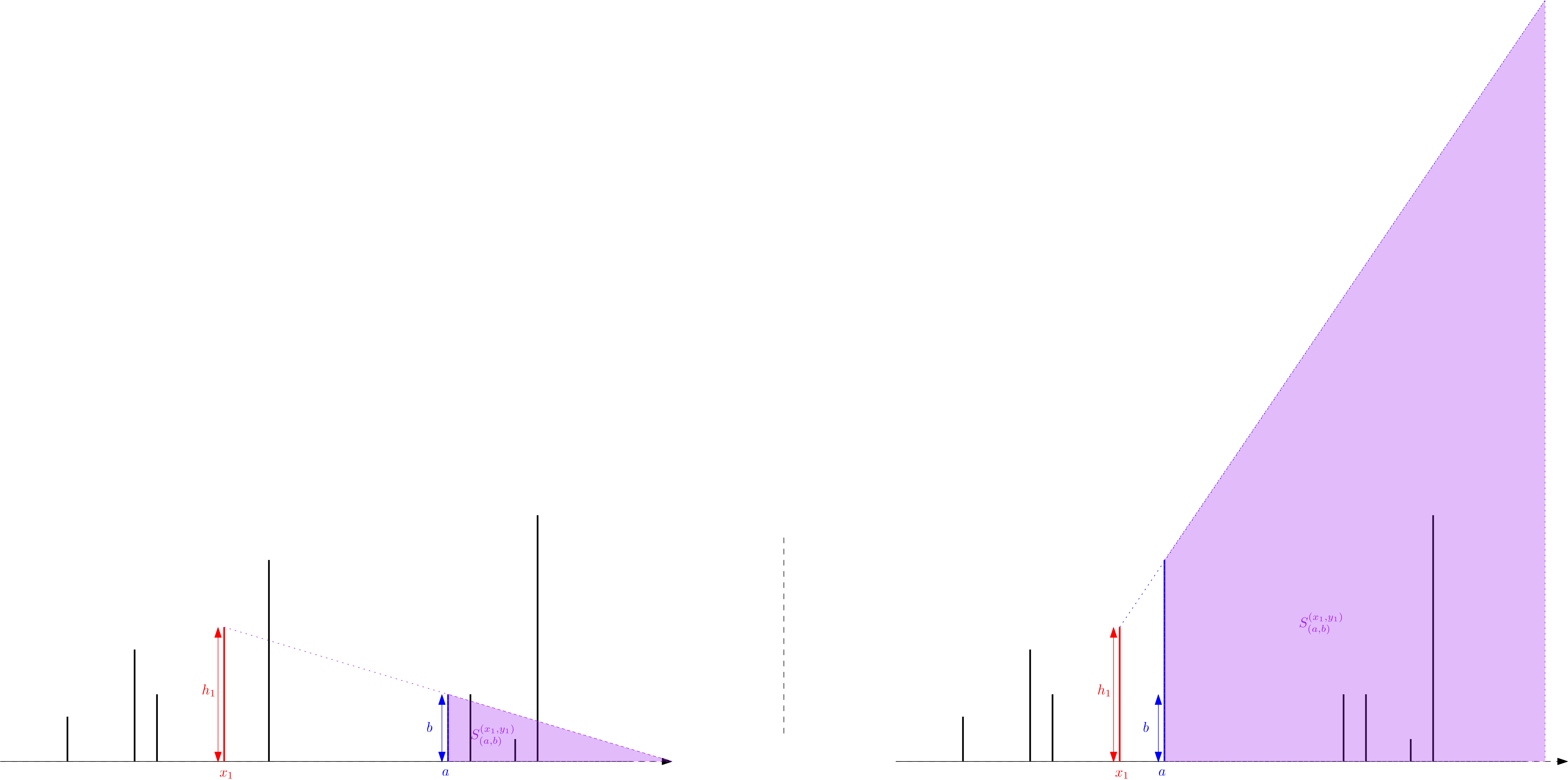}
            \caption{The shade can be seen as the part of the half plane that is in the shadow of $(a, b)$ from a light source situated in $(x_1, h_1)$.}
\end{figure}
        
This notion is the natural one for the model described here. Indeed, the shade of a building from a point corresponds to the locations of $\Half$ of which this building blocks direct transmission from the said point. It also allows us to reformulate the definition of blocking building: a building $(a_1, b_1)$ is the blocking building of a point $(x, y)$ if and only if the shade $S^{(x, y)}_{(a_1, b_1)}$ contains that of all subsequent buildings, i.e.
\[\forall (a, b)\in \landsc \ \mbox{st}\ a\geq a_1,\ S^{(x, y)}_{(a, b)}\subset S^{(x, y)}_{(a_1, b_1)}.\]
This is equivalent to requiring that $S^{(x, y)}_{(a_1, b_1)}$ contains all the buildings on the right of $a_1$:
\[\forall (a, b)\in \landsc \ \mbox{st}\ a\geq a_1,\ (a, b)\in S^{(x, y)}_{(a_1, b_1)}.\]

Moreover, this notion of \textit{shade} is the main idea behind the characterisation of \textit{eventual relay} through the notion of \textit{reverse shade}, central in Section \ref{tree}, and the finite range case clearly highlights the differences and limitations we face in Section \ref{finite_range}.

Since the geometric aspect of the problem boils down to understanding the angles formed by tops of buildings, let us now state a basic fact on the angles formed by points of the half plane that come into play in the following sections:

\begin{proposition}\label{geomsimple}
Let $(x_1, y_1)$, $(x_2, y_2)$ and $(x_3, y_3)$ be three points of the half plane $\Half$ such that $x_1\leq x_2 \leq x_3$. Then the angle formed by $(x_1, y_1)$ and $(x_3, y_3)$ is between that formed by $(x_1, y_1)$ and $(x_2, y_2)$ and that formed by $(x_2, y_2)$ and $(x_3, y_3)$.
\end{proposition}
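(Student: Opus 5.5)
The plan is to reduce the statement about angles to a statement about slopes. Since $\arctan$ is increasing, it suffices to show that the slope $s_{13} = \frac{y_3-y_1}{x_3-x_1}$ lies between $s_{12} = \frac{y_2-y_1}{x_2-x_1}$ and $s_{23} = \frac{y_3-y_2}{x_3-x_2}$. The key identity is that $s_{13}$ is a convex combination of the other two slopes. Write
\[
y_3 - y_1 = (y_2-y_1) + (y_3-y_2) = s_{12}(x_2-x_1) + s_{23}(x_3-x_2),
\]
and divide by $x_3-x_1$. This gives
\[
s_{13} = \lambda\, s_{12} + (1-\lambda)\, s_{23}, \qquad \lambda = \frac{x_2-x_1}{x_3-x_1}\in[0,1].
\]
A convex combination of two reals lies between them, which yields the claim.

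The only real obstacle is the degenerate cases where some of the $x_i$ coincide, so that the slopes are not finite. I would first treat the generic case $x_1<x_2<x_3$ as above. For the boundary cases I would adopt the natural convention that the angle of two points with equal abscissa is $\pm\pi/2$ according to the sign of $y_j-y_i$ (or any value, if the points coincide). Then I would check each case directly.

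If $x_1=x_2<x_3$, the angle from $(x_1,y_1)$ to $(x_3,y_3)$ is still determined by a finite slope. Since the two lower points share an abscissa, $s_{13}$ is compared with $s_{23}$ through the vertical offset $y_2-y_1$. The angle formed by the first two points is $\pm\pi/2$ with the same sign as $y_2-y_1$. If $y_2\ge y_1$, then $s_{13}\ge s_{23}$ and the bound is $\pi/2$. If $y_2\le y_1$, then $s_{13}\le s_{23}$ and the bound is $-\pi/2$.

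The case $x_2=x_3$ is symmetric. When all three abscissae are equal, the statement is a trivial comparison of signs of vertical differences, under whatever convention is adopted.

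Since the paper only uses the proposition for buildings with distinct bases, I would state that case as the substantive one. I would then mention the degenerate cases briefly as a limiting or convention-dependent check.
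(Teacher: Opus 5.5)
Your proof is correct and is the paper's argument: the paper also writes $\frac{y_3-y_1}{x_3-x_1}$ as the convex combination of the two other slopes, with weights $\frac{x_2-x_1}{x_3-x_1}$ and $\frac{x_3-x_2}{x_3-x_1}$, and concludes from there. The paper silently omits the degenerate cases with coinciding abscissae; your extra treatment of them under the $\pm\pi/2$ convention is correct.
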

\begin{proof}
This is simply because the tangent of the angle formed by $(x_1, y_1)$ and $(x_3, y_3)$ is a convex combination of the two others:
\[\frac{y_3 - y_1}{x_3-x_1} = \frac{y_2-y_1}{x_2-x_1}\frac{x_2-x_1}{x_3-x_1}+\frac{y_3-y_2}{x_3-x_2}\frac{x_3-x_2}{x_3-x_1}.\]
\end{proof}

This gives rise to an important property for the infinite range case: 

\begin{corollary}\label{increase}\textbf{Increase of information.}\\
    Let $(x, y)$ be a point of $\Half$ and $\landsc$ be a landscape of buildings. Then the angle formed by $(x, y)$ and its blocking building $\tau(x, y)$ is greater than that formed by $\tau(x, y)$ and its own blocking building $\tau (\tau (x, y))$. In other words, the sequence of shades $\left(S^{\tau^{(n)}(x, y)}_{\tau^{(n+1)}(x, y)}\right)_{n\in \N}$ is non increasing with respect to inclusion.
\end{corollary}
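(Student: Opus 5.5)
Write $p_0=(x,y)$, $p_1=\tau(x,y)=(x_1,y_1)$ and $p_2=\tau(p_1)=(x_2,y_2)$, assuming both blocking buildings are well defined. By definition of the blocking building, $x<x_1<x_2$, since $p_2$ is a building strictly to the right of $x_1$. The plan is to apply Proposition \ref{geomsimple} to the triple $p_0,p_1,p_2$ and combine it with the maximality defining $p_1$.

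First, $p_2$ is a building with base $x_2>x$, so it competes in the argmax defining $\tau(x,y)$. Hence
\[\frac{y_2-y}{x_2-x}\le \frac{y_1-y}{x_1-x}=\slope(x,y,\landsc).\]
Second, Proposition \ref{geomsimple} (the convex combination identity in its proof) writes $\frac{y_2-y}{x_2-x}$ as a convex combination, with positive weights, of $\frac{y_1-y}{x_1-x}$ and $\frac{y_2-y_1}{x_2-x_1}$. A convex combination of two numbers that is at most the first one forces the second to be at most the first. Therefore
\[\slope(p_1,\landsc)=\frac{y_2-y_1}{x_2-x_1}\le \frac{y_1-y}{x_1-x}=\slope(p_0,\landsc).\]
Since $\arctan$ is increasing, $\theta(p_1)\le\theta(p_0)$, which is the first claim. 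Applying the same argument to $\tau^{(n)}(x,y)$ for each $n$ shows that the sequence of slopes $\slope(\tau^{(n)}(x,y))$ is non-increasing.

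For the shade formulation, I would show $S^{p_1}_{p_2}\subset S^{p_0}_{p_1}$ and then iterate. Take $(u,v)\in S^{p_1}_{p_2}$, so $u\ge x_2\ge x_1$ and $\frac{v-y_1}{u-x_1}\le \slope(p_1)\le\slope(p_0)$. Apply Proposition \ref{geomsimple} (its convex-combination form) to $p_0,p_1,(u,v)$: the slope from $p_0$ to $(u,v)$ is a convex combination of $\slope(p_0)$ and $\frac{v-y_1}{u-x_1}\le\slope(p_0)$, so it is at most $\slope(p_0)$. Together with $u\ge x_1$, this gives $(u,v)\in S^{p_0}_{p_1}$. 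Replacing $(p_0,p_1,p_2)$ by $(\tau^{(n)},\tau^{(n+1)},\tau^{(n+2)})$ then yields the monotone sequence of shades.

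The main delicate point is the boundary case $u=x_1$, where the quotient from $p_1$ is undefined. If $u=x_1$ then $u\ge x_2>x_1$ cannot hold, so this case never arises and the inclusion is safe. The only other care needed is confirming that $x_2-x_1>0$ and $x_1-x>0$, so that the convex weights are strictly positive.
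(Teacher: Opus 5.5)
Your proof is correct and is essentially the paper's own argument. The paper justifies this corollary only through the caption of Figure~\ref{increase_figure}: $\tau(\tau(x,y))$ lies in the shade $S^{(x,y)}_{\tau(x,y)}$, and the convex-combination identity of Proposition~\ref{geomsimple} then forces both the slope decrease and the nesting of shades, which are exactly your two steps, made explicit. One small addition: for the scheme $\tau$ the inequality of angles is in fact strict, since equality would make $\tau(\tau(x,y))$ a second maximiser of the slope from $(x,y)$, contradicting uniqueness of the argmax.
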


\begin{figure}[h]
            \centering
            \includegraphics[width=0.75\textwidth]{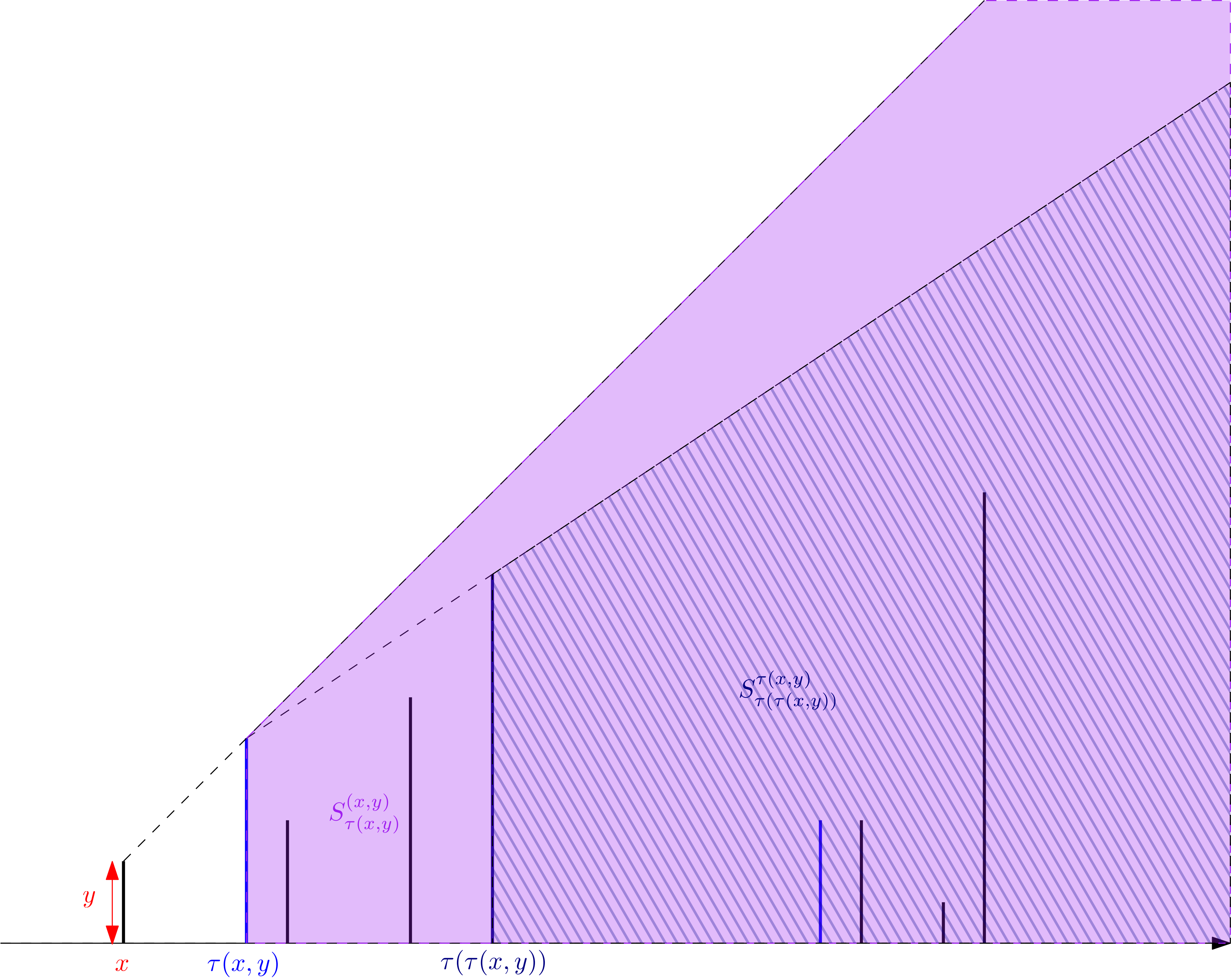}
            \caption{\label{increase_figure}The convexity of the shades implies this non-increasing character of the shades: Indeed, since the blocking building $\tau(x, y)$ as well as all buildings to its right are in the shade $S^{(x, y)}_{\tau(x, y)}$, the shade $S^{\tau(x, y)}_{\tau(\tau(x, y))}$ is included in $S^{(x, y)}_{\tau(x, y)}$.}
\end{figure}

Having defined the quantities of interest for a fixed landscape, let us now look at the types of landscapes we consider:

\subsection{Random setting}

In the following, as in \cite{Junse_Francois}, we assume the buildings are distributed as a marked Poisson point process with i.i.d. marks. However, we assume more general forms for the distributions of the heights (i.e., the marks). \\

In the following, for $\lambda > 0$ and $\mathcal{L}$ a probability measure on $\R_+$, we call a Poisson building process with intensity $\lambda$ and height distribution $\mathcal{L}$ a marked Poisson Point Process on $\R$ with i.i.d. marks (independent from the process itself) having law $\mathcal{L}$.

In this paper, we call such landscapes Poisson building processes. For our study, we are interested in the height distributions that make it possible to define the blocking building and blockage angle. Let $L$ be a Poisson building process with intensity $\lambda >0$ and height distribution $\mathcal{L}$.\\

One can notice that having $\underset{(a, b) \in L, a>0}{\text{max}} \frac{b}{a}$ almost surely well defined is equivalent to having $\mathcal{L}$ integrable, which is in turn equivalent to having $\underset{(a, b) \in L, a>0}{\text{max}} \frac{b-y}{a-x}$ almost surely well defined for all $(x, y)\in \Half$ (this comes from the law of large numbers, but one can see it in terms of thinning for marked PPP, see for instance Proposition \ref{dessous_diago}). As such, in the following we always assume that $\mathcal{L}$ has finite mean.

However, this is not enough to ensure that the blocking building is well defined, as the said maximum needs to be reached in a unique building for our current framework. A necessary and sufficient condition for that is to have $\mathcal{L}$ not put any positive mass on the supremum of its support. Indeed, the only lines having a positive probability to have the top of building on them are the horizontal lines with the height being an atom of $\mathcal{L}$. This, combined with the Markov property as well as the independence of marks ensures that, for any point of the half plane, it is aligned with two (and in practice an infinite amount) tops of buildings if and only if its height $h$ is that of an atom of $\mathcal{L}$, in which case the tops it is aligned with are also at said height, and as such, the angle they form is $0$. But this can be the blockage angle if and only if there are no taller buildings, i.e., $h$ is the supremum of the support of $\mathcal{L}$. This specific case, where the blockage angle eventually reaches $0$, once reformulated as to be part of our framework, is of particular interest in Section \ref{tree}, as it exhibits a different qualitative behaviour, linked with a difference in terms of the classification of covariant vertex shifts developed in \cite{EFT}.\\

Thanks to these observations, we are now ready to formulate our results on this framework.

\section{Evolution in the multihop case}\label{multihop}

In this section, we assume the following:
\begin{itemize}
    \item $\mathcal{L}$ is a law on $\R_+$ that has finite means and puts no mass on the maximum of its support, and $\lambda >0$ is a positive real number.
    \item $L$ is a random landscape that is a Poisson building process of intensity $\lambda$ and height distribution $\mathcal{L}$.
\end{itemize}

The goal of the section is to understand the law of the sequence $(\tau^{(n)}(0, 0, L))_{n\in \N}$ of successive blocking buildings of a typical user placed in this landscape. However, for the sake of completeness, as well as for highlighting some later properties of this evolution, we assume $(x_0, h_0)$ to be a random point in $\Half$, distributed according to a certain law $\mu$. From this, let us introduce some notation:
\begin{itemize}
    \item We let $F$ be the CDF of $\mathcal{L}$, and $\primi$ be the primitive of $1-F$ that goes to zero at $+\infty$ (it exists since $\mathcal{L}$ is integrable).
    \item We let the sequence $(X_n, H_n)_{n\in\N}$ be the blockage sequence $(\tau^{(n)}(x_0, h_0, L))_{n\in \N}$, and for $n\geq 1$, $\mathrm{t}_n$ be the tangent of the $n$-th blockage angle: 
    \[\mathrm{t}_n = \frac{H_n-H_{n-1}}{X_n - X_{n-1}}.\]
\end{itemize}

\begin{figure}[h]\label{Figure_seq}
            \centering
            \includegraphics[width=0.75\textwidth]{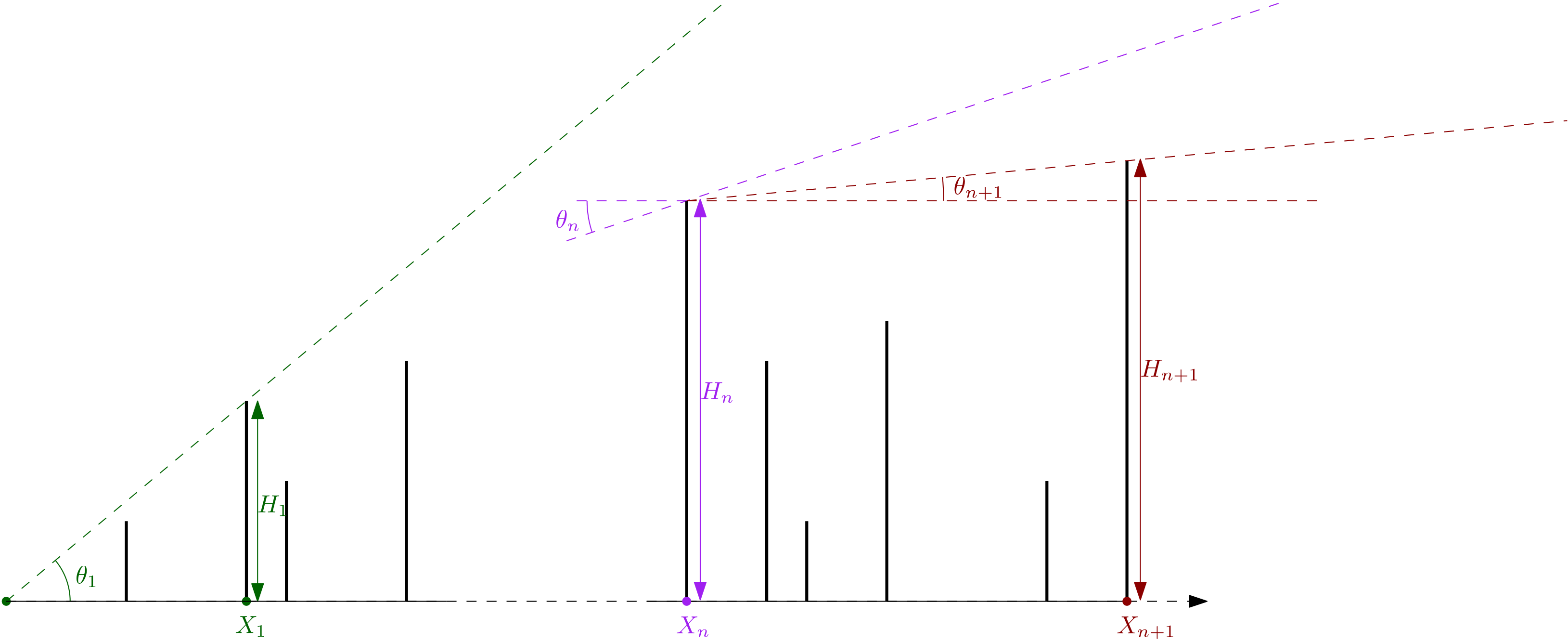}
            \caption{The blockage sequence seen from the origin.}
\end{figure}
The main result of this section is the following:
\begin{theorem}\label{evo}\textbf{Evolution of blockage.}

Let $N$ be a fixed integer.
The sequence $(X_n, H_n)_{0\leq n\leq N}$ has density $g_N$ with respect to the product measure $\mu\displaystyle\bigotimes_{i=1}^{N} (\Leb \otimes \mathcal{L})$ (we consider here the Lebesgue measure on $\R_+$), which is given by
\[g_N: (x_n, h_n)_{0\leq n\leq N} \mapsto \lambda^N\exp\left(-\lambda\left(\frac{-\primi(h_0)}{t_1}+\sum_{i=1}^{n-1}\primi(h_i)(\frac{1}{t_{i}}-\frac{1}{t_{i+1}})\right)\right)\prod_{1\leq i\leq n}\mathds{1}_{h_i\geq h_{i-1}}\mathds{1}_{t_i\geq t_{i+1}},\]
where $t_i$ is defined to be the slope from $(x_{i-1}, h_{i-1})$ to $(x_i, h_i)$.
\end{theorem}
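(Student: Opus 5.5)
The plan is to compute the joint law directly from the Poisson structure. Writing $t_i$ for the slope from $(x_{i-1},h_{i-1})$ to $(x_i,h_i)$, the event $\{(X_i,H_i)\in d(x_i,h_i),\ 0\leq i\leq N\}$ will be written as "the marked PPP has an atom in each infinitesimal box $d(x_i,h_i)$, $i\geq 1$, and no atom in an explicit \emph{void region} $V_N$". The density $g_N$ should then be $\lambda^N$, times $\exp(-\text{intensity measure of } V_N)$, times the indicators encoding geometric consistency. I would argue by induction on $N$, conditioning on $(x_0,h_0)\sim\mu$ first; since $\mu$ is arbitrary and $L$ is independent of it, it suffices to treat a deterministic starting point.

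\textbf{Step 1: identify the void region.} By the definition of $\tau$, $(x_1,h_1)$ is the blocking building of $(x_0,h_0)$ iff it is in $L$, $x_1>x_0$, and no other building $(a,b)$ with $a>x_0$ satisfies $b>h_0+t_1(a-x_0)$. Let $U_1=\{(a,b): a>x_0,\ b>h_0+t_1(a-x_0)\}$ denote this region. Inductively, given the first $i$ blocking buildings, $(x_{i+1},h_{i+1})$ is the next one iff no building lies in
\[U_{i+1}=\{(a,b): a>x_i,\ b>h_i+t_{i+1}(a-x_i)\}.\]
Because $(x_i,h_i)$ lies on the boundary line of $U_i$, Corollary \ref{increase} (equivalently Proposition \ref{geomsimple}) shows that consistency forces $t_{i+1}\leq t_i$. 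The part of $U_{i+1}$ not already known to be empty is then $U_{i+1}\setminus U_i$, the wedge at $(x_i,h_i)$ between the lines of slopes $t_{i+1}$ and $t_i$ to the right of $x_i$. The indicator $h_i\geq h_{i-1}$ comes from $t_i>0$ a.s. Indeed, blocking slopes are positive: arbitrarily tall buildings exist far away unless the height reached is the sup of the support, which has no mass. Conversely, any configuration satisfying the indicators, with atoms exactly at the $(x_i,h_i)$ and none in $V_N=\bigcup_i U_i$, produces this blockage sequence. So the event decomposes exactly.

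\textbf{Step 2: compute the intensity of $V_N$.} The intensity measure of the marked PPP is $\lambda\,da\otimes\mathcal{L}(db)$. For $U_1$ this gives
\[\lambda\int_{x_0}^\infty \bigl(1-F(h_0+t_1(a-x_0))\bigr)\,da=\frac{\lambda}{t_1}\int_{h_0}^\infty(1-F)=-\frac{\lambda\,\primi(h_0)}{t_1}.\]
By the same change of variables, the wedge $U_{i+1}\setminus U_i$ has intensity $\lambda(-\primi(h_i))\bigl(\tfrac{1}{t_{i+1}}-\tfrac{1}{t_i}\bigr)=\lambda\,\primi(h_i)\bigl(\tfrac1{t_i}-\tfrac1{t_{i+1}}\bigr)$. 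This is the difference of the two half-plane integrals from $(x_i,h_i)$. Summing gives exactly the exponent in $g_N$. The factor $\lambda^N$ and the reference measure $\Leb\otimes\mathcal{L}$ for each building come from the atoms in the boxes.

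\textbf{Step 3: justify the product formula.} This is the main obstacle. The $(X_i,H_i)$ are random, and the regions $U_i$ are themselves random. I would use the multivariate Mecke (Slivnyak) formula: for a nonnegative test function $\varphi$,
\[\e\,\varphi\bigl((X_i,H_i)_{i\le N}\bigr)=\e\sum_{\text{distinct }p_1,\dots,p_N\in L}\varphi(p_1,\dots,p_N)\,\mathds{1}\{p_1,\dots,p_N\text{ is the blockage sequence}\}.\]
By Step 1 the indicator equals the consistency indicators times $\mathds{1}\{L\cap V_N(p_1,\dots,p_N)=\emptyset\}$, and $V_N$ contains none of the $p_j$ (they lie on the boundaries). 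Applying Mecke then turns the sum into $\lambda^N\int\varphi\,\p(L\cap V_N=\emptyset)\prod(da_j\,\mathcal{L}(db_j))$, with $L$ the original process. The void probability of a Poisson process is $\exp(-\text{intensity}(V_N))$. The points needing care are: verifying that the $U_i$'s union has intensity equal to the sum of the disjoint wedge pieces, which follows from the nesting in Corollary \ref{increase}; and checking that ties have probability zero, which follows from the no-atom-at-the-supremum assumption discussed before the theorem. Integrating over $\mu$ then yields the density $g_N$.
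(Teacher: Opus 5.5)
Your proof is correct, and it reaches the density by a genuinely different route from the paper. The paper never describes the void region as a whole. It fixes a product of small boxes $\event$ and builds an explicit sub-event $\event^-$: exactly one building in each box, all buildings in the gaps $[a_i+\epsilon_i,a_{i+1}]$ below lines of the minimal admissible slopes $t^{min}_i$, and a tail condition after the last box. It factorizes $\p(\event^-)$ using independence of the Poisson process on disjoint strips together with Lemma \ref{dessous_diago}. Refining the boxes and passing to a Riemann-sum/monotone-convergence limit then gives only the inequality $\p(\event)\geq\int_{\event}g_N$. The paper upgrades this to equality by arguing that $g_N$ has total mass one. Your multivariate Mecke formula, combined with the exact description of the event as ``atoms at the $p_i$ and no atom in the disjoint union of $U_1$ and the wedges $U_{i+1}\setminus U_i$'', yields the identity in one step. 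This buys three things. First, there is no $\epsilon$--$\eta$ bookkeeping. Second, you never need $\int g_N=1$; the paper asserts this without checking it, and it is in fact equivalent to the a.s. existence of the length-$N$ blockage sequence. Third, your argument applies verbatim to defective laws, such as the restricted law in Proposition \ref{evo_finite}, where the paper itself notes that the total-mass trick fails and a separate matching bound is needed. You also get a transparent geometric reading of the exponent as the intensity of a union of disjoint wedges. What the paper's route buys in exchange is that it uses nothing beyond independence over disjoint sets and the thinning lemma, avoiding Palm/Mecke calculus. One small sharpening of your Step 1: inside the Mecke integral the indicator $h_i\geq h_{i-1}$ is automatic, because for $t_i\leq 0$ and $h_{i-1}<S$ the region $U_i$ has infinite intensity, so the void probability vanishes. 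This also makes explicit the implicit requirement $\mu(\{h_0<S\})=1$ for $\tau(x_0,h_0)$ to be defined at all.
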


This density can be decomposed in a product of terms containing only quantities depending of the $i$-th and $(i+1)$-th jump, hinting at an evolution with a loss of memory. A simple change of variable yields the following characterisation:

\begin{corollary}\label{Markov}\textbf{Markovian characteristic of the evolution of blockage.}\\
    The sequence $(\mathrm{t}_n, H_n)$ is a Markov chain with its kernel $\mathrm{VIS}$ given by:
    \begin{align*}
        &\forall (T, H)\in \Half, \forall A \in \mathcal{B}(\Half), \mathrm{VIS}((T, H), A) = \int_A \mbox{vis}^{T, H}(t, h) \rm{d}\Leb(t) \rm{d}\mathcal{L}(h)\\
        &\forall T, H, t, h \in \R_+, \mbox{vis}^{T, H} (t, h) :=\mathds{1}_{h> H, 0\leq t\leq  T} \frac{h-H}{t^2}\lambda \exp\left(-\lambda \primi(H)(\frac{1}{T}-\frac{1}{t})\right).
    \end{align*}
\end{corollary}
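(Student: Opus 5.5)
We derive the corollary from Theorem~\ref{evo} by a change of variables followed by a factorisation of the density into one-step transition terms.

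\textbf{Change of variables.} Fix $N$. Consider the map
\[
(x_n,h_n)_{0\leq n\leq N}\ \longmapsto\ \big((x_0,h_0),(t_n,h_n)_{1\leq n\leq N}\big),\qquad t_n=\frac{h_n-h_{n-1}}{x_n-x_{n-1}}.
\]
On the support, $h_n>h_{n-1}$ and $x_n>x_{n-1}$, so each $t_n\in(0,\infty)$.

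With $h_{n-1},h_n$ and $x_{n-1}$ held fixed, the substitution $x_n=x_{n-1}+(h_n-h_{n-1})/t_n$ is a diffeomorphism $x_n\mapsto t_n$. Its Jacobian is
\[
\left|\frac{\partial x_n}{\partial t_n}\right|=\frac{h_n-h_{n-1}}{t_n^2}.
\]
The full change of variables is triangular: $x_n$ depends only on $t_n$ and on earlier coordinates. Hence the total Jacobian is the product of these one-dimensional factors. The marks $h_n$ keep their reference measure $\mathcal{L}$ throughout.

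Therefore $\big((X_0,H_0),(\mathrm{t}_n,H_n)_{1\leq n\leq N}\big)$ has density with respect to $\mu\otimes\bigotimes_{i=1}^N(\Leb\otimes\mathcal{L})$ equal to
\[
\lambda^N\prod_{i=1}^N\frac{h_i-h_{i-1}}{t_i^2}\exp\left(-\lambda\left(-\frac{\primi(h_0)}{t_1}+\sum_{i=1}^{N-1}\primi(h_i)\Big(\frac{1}{t_i}-\frac{1}{t_{i+1}}\Big)\right)\right)
\]
times the indicators $\mathds{1}_{h_i> h_{i-1}}$ and $\mathds{1}_{t_{i+1}\leq t_i}$. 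The inequalities can be taken strict in $h$ because ties are $\mathcal{L}$-null, as argued in Section~\ref{model}.

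\textbf{Factorisation.} Regroup the exponent so that each term involves only consecutive pairs $(t_{i},h_{i})$ and $(t_{i+1},h_{i+1})$. The bracket pairs $\primi(h_i)$ with $\frac{1}{t_i}-\frac{1}{t_{i+1}}$ for $1\leq i\leq N-1$, while $h_0$ appears only through $-\primi(h_0)/t_1$.

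The first step needs separate treatment, since there is no $t_0$. Introduce the convention $t_0=+\infty$, i.e.\ $1/t_0=0$. Then the first term becomes $\primi(h_0)\big(\tfrac1{t_0}-\tfrac1{t_1}\big)$, which matches the generic form. The density thus becomes
\[
g(x_0,h_0)\prod_{i=0}^{N-1}\mathrm{vis}^{t_i,h_i}(t_{i+1},h_{i+1}),
\]
where $g$ is the density of $\mu$ and $\mathrm{vis}$ is as in the statement.

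This product form is exactly the finite-dimensional law of a Markov chain started from $(\infty,h_0)$ with kernel $\mathrm{VIS}$. The first step is an initial kernel $\mathrm{VIS}((\infty,H_0),\cdot)$, and from the first jump onward $(\mathrm{t}_n,H_n)$ evolves with kernel $\mathrm{VIS}$.

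Since this holds for every $N$, the Markov property follows. Concretely, the conditional density of $(\mathrm{t}_{N+1},H_{N+1})$ given the past is the ratio of the densities for $N+1$ and $N$, which equals $\mathrm{vis}^{t_N,h_N}(t_{N+1},h_{N+1})$.

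\textbf{Consistency check.} I will verify that $\mathrm{VIS}$ is a probability kernel, i.e.\ that
\[
\int\mathds{1}_{h>H,\,t\leq T}\,\frac{h-H}{t^2}\,\lambda\, e^{-\lambda\primi(H)(1/T-1/t)}\,dt\,d\mathcal{L}(h)=1.
\]
Substitute $u=1/t-1/T$, so that $du=dt/t^2$. Using $\int_{h>H}(h-H)\,d\mathcal{L}(h)=\primi(H)$, the integral reduces to $\int_0^\infty\lambda\primi(H)e^{-\lambda\primi(H)u}\,du=1$.

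\textbf{Main obstacle.} The delicate points are bookkeeping rather than substance. First, the sign in the exponent must be arranged so that the exponent is $-\lambda\primi(H)(1/t-1/T)\leq 0$; the sign written in the statement should be read this way given $t\leq T$. Second, the initial step must be handled via $t_0=\infty$ without losing the dependence on $x_0$, which indeed drops out of the density.

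I expect the Jacobian and the indicator conventions, including null sets where $t_i=t_{i+1}$, to be where care is needed. The remainder is a direct consequence of Theorem~\ref{evo}.
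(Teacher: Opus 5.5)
Your core argument is the same as the paper's. The paper obtains the corollary from Theorem~\ref{evo} by the change of variables $x_n\mapsto \mathrm{t}_n$, with Jacobian $(h_n-h_{n-1})/t_n^2$, and by regrouping the exponent into consecutive-pair terms using $1/t_0=0$. That part of your proposal is correct. One small correction on ties: the set $h_n=h_{n-1}$ need not be $\mathcal{L}$-null when $\mathcal{L}$ has atoms, but it carries no mass because the density vanishes there.

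What is wrong is your normalisation check and the ``sign correction'' you draw from it. In the paper, $\primi$ is the primitive of $1-F$ that vanishes at $+\infty$. So $\primi\leq 0$, and
\[
\int_{h>H}(h-H)\,\mathrm{d}\mathcal{L}(h)=\int_H^{\infty}(1-F(u))\,\mathrm{d}u=-\primi(H),
\]
not $\primi(H)$. Consequences:
\begin{itemize}
\item For $t\leq T$, the exponent $-\lambda\primi(H)\bigl(\frac1T-\frac1t\bigr)$ in the statement is already nonpositive, since it is the negation of a product of two nonpositive factors.
\item With $u=\frac1t-\frac1T$, the check reads $\int_0^\infty\lambda(-\primi(H))\,e^{\lambda\primi(H)u}\,\mathrm{d}u=1$. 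The statement needs no reinterpretation.
\item Your proposed reading $-\lambda\primi(H)\bigl(\frac1t-\frac1T\bigr)$ would make the exponent nonnegative and the kernel non-integrable.
\item That reading also contradicts your own factorisation step. Read off Theorem~\ref{evo} with $1/t_0=0$, it yields $\exp\bigl(-\lambda\primi(h_i)(\frac1{t_i}-\frac1{t_{i+1}})\bigr)$, which is exactly the statement's sign.
\end{itemize}
Drop that remark and fix the identity.

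A minor point: the reference measure already contains $\mu$, so there is no factor $g(x_0,h_0)$. The density in the initial coordinates is $1$.
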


\begin{remark}
As a sanity check, one can verify that, by setting $N = 1$ or $N=2$ and choosing $\mathcal{L}$ to be exponential, our results correspond to those of \cite{Junse_Francois} in the transmissive case.
\end{remark}

\begin{remark}
The Markov property can be understood from Corollary \ref{increase}: the building process to the right of a given blocking building $(X_n, H_n)$ is, from the Markov property of the independently marked Poisson Point Process, a Poisson building process conditioned on stay in the successive shades $\left(S^{(X_i,H_i)}_{(X_{i-1}, H_{i-1})}\right)$. But the decreasing characteristic of the shades makes it that all this conditioning is captured by the condition of having all the buildings to the right in the last shade $S^{(X_n,H_n)}_{(X_{n-1}, H_{n-1})}$ (see fig. \ref{increase_figure}).
\end{remark}

One can also analyse the law in Theorem \ref{evo} as the following:
\begin{itemize}
    \item The measure $\lambda^N\displaystyle\bigotimes_{i=1}^{N} (\Leb \otimes \mathcal{L})$ represents the presence of buildings in $(x_n, h_n)_{0\leq n\leq N}$.
    \item The indicators come from the \textit{increase of information}.
    \item The last exponential term represents the successive conditioning of the Poisson building process we mentioned above.
\end{itemize}
As such, before proving the theorem, let us first recall a classical thinning fact (see for instance \cite{baccelli_random_2024}) for PPP, that is central in getting the exponential term.

Let $\Phi$ be a point process, and $A$ be a measurable portion of the line $\R$ and $f:\R\rightarrow \R_+$ a function. We denote \[\belowa{A}{f}:=\{\forall i \in \N\text{ st }X_i\in A\cap\Phi, H_i\leq f(X_i)\}\] the event that all buildings located in $A$ have their heights lower than the curve of the function $f$.

\begin{lemma}\label{dessous_diago}
    Let $(X_i)$ be a Poisson Point Process of intensity $\lambda'$, marked with heights $(H_i)$ being i.i.d random variables of law $\mathcal{L}$. Then, for any given function $f$ that is Lebesgue measurable and positive on $A$, the probability of $\belowa{A}{f}$ is:
    \[\exp \left(\int_A -\lambda' \mathcal{L}(]f(x), \infty[) \rm{d}x\right) .\]
\end{lemma}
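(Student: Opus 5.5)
The plan is to use the marking theorem to turn the event $\belowa{A}{f}$ into a void event for an auxiliary Poisson process, and then read off the probability from the Poisson void formula.

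Consider the marked process $\Psi=\{(X_i,H_i)\}$ as a point process on $\R\times\R_+$. Since the marks are i.i.d. with law $\mathcal{L}$ and independent of the Poisson point process $(X_i)$ of intensity $\lambda'$, the marking theorem makes $\Psi$ a Poisson point process on $\R\times\R_+$. Its intensity measure is $\lambda'\,\Leb\otimes\mathcal{L}$.

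Next, introduce the region strictly above the graph of $f$ over $A$:
\[D=\{(x,h)\in \R\times\R_+ : x\in A,\ h>f(x)\}.\]
This set is measurable, because $f$ is Lebesgue measurable on $A$ (we may work with the completed product $\sigma$-algebra, or replace $f$ by a Borel version equal to it almost everywhere; this changes $D$ only by a null set for the intensity). By definition, $\belowa{A}{f}$ is exactly the event $\Psi(D)=0$, i.e. no point of $\Psi$ lies in $D$.

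Since $\Psi$ is Poisson, $\Psi(D)$ is a Poisson random variable with mean $\Lambda(D)$, possibly infinite, in which case the probability below is $0$. Hence $\p(\Psi(D)=0)=\exp(-\Lambda(D))$. By Fubini–Tonelli, the mean is
\[\Lambda(D)=\lambda'\int_A \mathcal{L}(]f(x),\infty[)\,\mathrm{d}x,\]
which is the claimed formula.

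The only step needing care is the use of the marking theorem and the measurability of $D$. If one prefers to avoid the marking theorem, the alternative is to condition on the number of points in bounded pieces $A\cap[-k,k]$. Given that number, the point locations are i.i.d. uniform on the piece. Summing the resulting Poisson series gives the exponential for each piece, and monotone convergence as $k\to\infty$ then yields the result for the whole of $A$.
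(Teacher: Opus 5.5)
Your proof is correct and is essentially the paper's argument: the paper views $\{X_i : H_i > f(X_i)\}$ as an independent thinning of $(X_i)$ with retention function $x\mapsto\mathcal{L}(]f(x),+\infty[)$ and reads off the void probability of the resulting Poisson process, which is just the projection onto the $x$-axis of your computation $\mathbb{P}(\Psi(D)=0)=\exp(-\Lambda(D))$ via the marking theorem. Your treatment of the measurability of $D$ and of the case $\Lambda(D)=+\infty$ is, if anything, slightly more careful than the paper's.
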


\begin{proof}
We consider the point process $\Phi\cap A$  on $A$. The event $\{\forall i \in \N, H_i\leq f(X_i)\}$ can be reformulated as $\{\{X_i : H_i> f(X_i)\} = \emptyset\}$. But the set of points $\{X_i : H_i> f(X_i)\}$ is a thinning of $(X_i)$ with retention function 
\[p:x \mapsto \mathcal{L}(]f(x), +\infty[).\]
As such, $\{X_i : H_i> f(X_i)\}$ is a PPP of intensity $\Lambda: B \mapsto \int_B \lambda'p(x)\rm{d}x$, which concludes our proof.
\end{proof}

In this paper, we mainly use this result in the case the function $f$ is linear, and as such we denote
\[\below{A}{(a, b)}{t}:=\belowa{A}{f: x\mapsto t(x-a)+b}\] 
the event that all buildings located in $A$ have their heights lower than the line of slope $t$ and passing through the point $(a, b)$.\\
Let us now prove Theorem \ref{evo}:

\begin{proof}
Let $N$ be fixed. Our objective is to compute the probability of $(X_n, H_n)_{0\leq n\leq N}$ being in a product of intervals.\\
To do so, let us consider the event 
\[\event  = \{\forall 0\leq i\leq N, X_i\in [a_i, a_i+\epsilon_i] \text{ and } H_i\in [b_i, b_i+\eta_i]\}.\]
For the sequences $(a_i)$, $(b_i)$, $(\epsilon_i)$ and $(\eta_i)$ given.\\
First, we bound downward $\event$. To do so, let us decompose the event $\event$: it is the conjunction of the following events:
\begin{enumerate}
    \item There are buildings $(x_i, h_i)$ in each of the $[a_i, a_i+\epsilon_i]\times[b_i, b_i+\eta_i]$.
    \item $\displaystyle\bigcap_{i=1}^{N}\below{[x_{i-1}, +\infty[}{(x_{i-1}, h_{i-1})}{t_i}$, where $t_i := \frac{h_{i+1}-h_i}{x_{i+1}-x_i}$.
\end{enumerate}
By Proposition \ref{geomsimple}, and the almost sure decreasing characteristic of the blockage angles, one can reformulate the second point to get the following characterisation:
\begin{enumerate}
    \item There are buildings $(x_i, h_i)$ in each of the $[a_i, a_i+\epsilon_i]\times[b_i, b_i+\eta_i]$.
    \item $\displaystyle\bigcap_{i=1}^{N}\below{[x_{i-1}, x_{i}]}{(x_{i-1}, h_{i-1})}{t_i}$, where $t_i := \frac{h_i-h_{i-1}}{x_i-x_{i-1}}$.
    \item $\below{[x_N, +\infty[}{(x_N, h_N)}{t_N}$.
    \item $(t_i)$ is decreasing and $(h_i)$ is increasing.
\end{enumerate}
We now want to find lower and higher bounds for each of these events that are independent from each other. We make use of the spatial Markov property of Poisson Point Processes to change the first three events in events taking place on disjoint domains of the line $\R$: the first only involves points with $x-$coordinate in $[a_i, a_i+\epsilon_i]$ and the second only involves buildings which are between $a_i+\epsilon_i$ and $a_{i+1}$. For the fourth, we simply impose conditions on the $a_i$, $b_i$, $\epsilon_i$ and $\eta_i$.

To that end, for $1\leq i\leq N$, we define
    \begin{align*}
    & t^{min}_{i}:= \frac{b_{i}-b_{i-1}-\eta_{i-1}}{a_{i}+\epsilon_i-a_{i-1}}.
    \end{align*}
    The variable $t^{min}_{i}$ can be interpreted as the the minimal slope formed by points in $[a_i, a_i + \epsilon_i]\times [b_i, b_i+\eta_i]$ and $[a_{i+1}, a_{i+1} + \epsilon_{i+1}]\times [b_{i+1}, b_{i+1}+\eta_{i+1}]$ (see fig.\ref{t_min}).

\begin{figure}[h]
            \centering
            \includegraphics[width=0.3\textwidth]{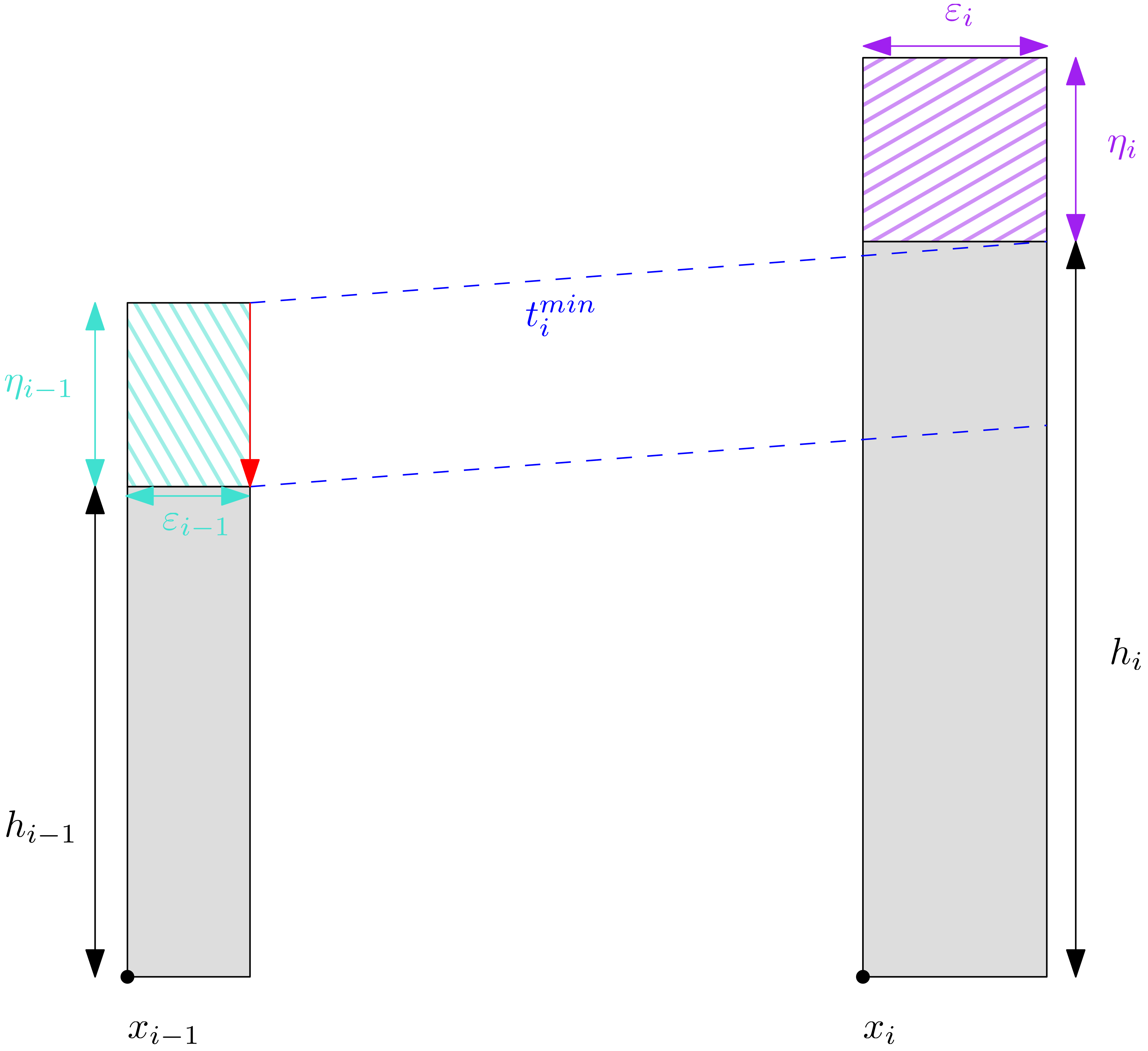}
            \caption{\label{t_min}Asking for having two points in the dashed areas such that every building between $x_{i-1}+\epsilon_{i-1}$ and $x_i$ is below the line between the two is weaker than asking that all buildings between the two wanted $x-$coordinates are under the line of slope $t^{min}_i$ and starting from $(x_{i-1} +\epsilon_{i-1}, h_{i-1})$.}
\end{figure}

Now, thanks to these notations and observations, we can estimate the probability of $\event$:\\
\textbf{Lower bound on $\p(\event)$.}

We have the following conjunction that implies $\event$:

\begin{enumerate}
    \item There is exactly one building in each of the $[a_i, a_i+\epsilon_i]$, and its height is in $[b_i, b_i +\eta_i]$.
    \item $\displaystyle\bigcap_{i=1}^{N}\below{[a_{i-1}+\epsilon_{i-1}, a_i]}{(a_{i-1}+\epsilon_{i-1}, b_{i-1})}{t^{min}_i}$.
    \item $\below{[a_N+\epsilon_N, +\infty[}{(a_N+\epsilon_N, b_N+\eta_N)}{t^{min}_N}$.
    \item $\forall 1\leq i\leq N, \ b_i\geq b_{i-1}+\epsilon_{i-1}$ and $t^{max}_i \leq t^{min}_{i-1}$.
\end{enumerate}

As such, let us denote 
\begin{align*}
    \event^{-} := &\left( \bigcap_{1\leq i \leq N} \{\text{there is exactly one building }(x, h)\text{ st }x \in [a_i, a_i+\epsilon_i],\text{ and it satisfies } h\in [b_i, b_i +\eta_i]\}\right)\\
    &\bigcap \left(\bigcap_{0\leq i \leq N-1} \below{[a_i+\epsilon_i, a_{i+1}]}{(a_i+\epsilon_i}{b_i),t^{min}_{i+1}}\right)\\
&\bigcap \below{[a_N+\epsilon_N, \infty[}{(a_N+\epsilon_N, b_N)}{t^{min}_N}\\
&\bigcap\left(\{\forall 1\leq i\leq N, \ b_i\geq b_{i-1}+\epsilon_{i-1}\}\cap \{\forall 1\leq i\leq N, \ t^{max}_i \leq t^{min}_{i-1}\}\right).
\end{align*}

All the events in this intersection are independent from the domain Markov property and the independence of the marks, so it suffices to compute their probabilities.

On the one hand, the probability of the last one is quite straightforward to compute, as we have:

\[\p\Biggl(\{\forall 1\leq i\leq N, \ b_i\geq b_{i-1}+\epsilon_{i-1}\}\cap \{\forall 1\leq i\leq N, \ t^{max}_i \leq t^{min}_{i-1}\Biggl) = \prod_{i=1}^{N}\mathds{1}_{b_i\geq b_{i-1}+\epsilon_{i-1}}\mathds{1}_{t^{max}_i \leq t^{min}_{i-1}}.\]

Then, the probability of the first event is
\begin{align*}
&\p\left( \bigcap_{1\leq i \leq N} \{\text{there is exactly one building }(x, h)\text{ st }x \in [a_i, a_i+\epsilon_i],\text{ and it satisfies } h\in [b_i, b_i +\eta_i]\}\right)\\
&= \mu([a_0, a_0+\epsilon_0]\times [\mathrm{b}_0, \mathrm{b}_0+\eta_0])\prod_{i = 1}^{N} \lambda\epsilon_i \exp(-\epsilon_i) \mathcal{L}([b_i, b_i +\eta_i]).
\end{align*}

For the second event, we can use Lemma \ref{dessous_diago} to get:
\begin{align*}
    \p\Biggl(&\below{[a_i+\epsilon_i, a_{i+1}]}{(a_i+\epsilon_i}{t^{min}_{i+1}}\Biggl) \\
    &= \exp \left(\displaystyle\int_{a_i+\epsilon_i}^{a_{i+1}} -\lambda \mathcal{L}(]t^{min}_{i+1} (x-a_i-\epsilon_i)+b_i, \infty[) \rm{d}x\right)\\
    &=  \exp \left( -\frac{\lambda}{t^{min}_{i+1}} ( \primi(b_i + (b_{i+1} - b_i -\eta_i)\frac{a_{i+1}-a_i-\epsilon_i}{a_{i+1}+\epsilon_{i+1}-a_i})-\primi (b_i))\right)\\
    &= \exp \left( -\frac{\lambda}{t^{min}_{i+1}} ( \primi(b_{i+1} )-\primi (b_i))\right)\exp \left( -\frac{\lambda}{t^{min}_{i+1}} ( \primi(b_i + (b_{i+1} - b_i -\eta_i)\frac{a_{i+1}-a_i-\epsilon_i}{a_{i+1}+\epsilon_{i+1}-a_i})-\primi (b_{i+1}))\right).
\end{align*}
The last exponential is greater than $1$ since $\primi$ is increasing.
And lastly, from our choice of $\primi$, we have:
\begin{align*}
    \p\Biggl(\below{[a_N+\epsilon_N, \infty[}{(a_N+\epsilon_N, b_N)}{t^{min}_N}\Biggl) = \exp \left(\frac{\lambda}{t^{min}_{N}} ( \primi(b_{N} ))\right).
\end{align*}

Gathering all this, we get the following:
\begin{align*}
    \p(\event^-) \geq \ &\mu([a_0, a_0+\epsilon_0]\times [\mathrm{b}_0, \mathrm{b}_0+\eta_0])\times \prod_{i=1}^{N}\lambda\epsilon_i  \mathcal{L}([b_i, b_i +\eta_i]) \\
    &\times\exp\left(-\lambda\left(\frac{-\primi(\mathrm{b}_0)}{t^{min}_1}+\sum_{i=1}^{n-1}\primi(b_i)(\frac{1}{t^{min}_{i}}-\frac{1}{t^{min}_{i+1}}\right)\right)\\
    &\times\prod_{i = 1}^{N} \exp(-\epsilon_i) \mathds{1}_{b_i\geq b_{i-1}+\epsilon_{i-1}}\mathds{1}_{t^{max}_i \leq t^{min}_{i-1}}.\\
\end{align*}
Note that, in this expression, the exponential term is continuous in all its arguments (since $\primi$ is differentiable), and the last product is a monotone function in all of the $\eta_i$ and $\epsilon_i$, that grows to $\prod \mathds{1}_{b_i\geq b_{i-1}}\mathds{1}_{t_i \leq t_{i-1}}$ when all the parameters $\epsilon_i$ and $\eta_i$ go to zero.

Moreover, our expressions are true regardless of whether the intervals $[a_i, a_i+\epsilon_i]$ and $[b_i, b_i+\eta_i]$ are open or closed. As such, by decomposing them as the unions:
\[[b_i, b_i+\eta_i] = \bigcup_{j = 0}^{n-1}\biggl[b_i+j\frac{\eta_i}{n}, b_i+(j+1)\frac{\eta_i}{n}\biggl[ \ \cup \{b_i+\eta_i\}\]
and
\[[a_i, a_i+\epsilon_i] = \bigcup_{j = 0}^{n-1}\biggl[a_i+j\frac{\epsilon_i}{n}, a_i+(j+1)\frac{\epsilon_i}{n}\biggl[ \ \cup \{a_i+\epsilon_i\},\]
and fixing a certain precision $\epsilon_i/n_0$ for the last term, we end up computing a Riemann sum in $2(n+1)$ dimensions, which yields, for any $n_0$:
\begin{align*}
\p(\event)\geq \int\lambda^N\exp\left(-\lambda\left(\frac{-\primi(h_0)}{t_1}+\sum_{i=1}^{N-1}\primi(h_i)(\frac{1}{t_{i}}-\frac{1}{t_{i+1}}\right)\right)\prod_{1\leq i\leq N}\mathds{1}_{h_i> h_{i-1}+\frac{\epsilon_{i-1}}{n_0}}&\mathds{1}_{\frac{h_i+\frac{\eta_i}{n_0}-h_{i-1}}{x_i-x_{i-1}}\geq\frac{h_i-h_{i-1}}{x_i-x_{i-1}-\frac{\epsilon_{i-1}}{n_0}} }\\\exp(-\frac{\epsilon_i}{n_0})
&\rm{d}\mu\displaystyle\bigotimes_{i=1}^{N} (\Leb \otimes \mathcal{L})(x_i, h_i).
\end{align*}

Monotone convergence for $n_0$ going to $+\infty$ finally gives us:
\[\p(\event)\geq \int g_N((x_i, h_i)\rm{d}\mu\displaystyle\bigotimes_{i=1}^{N} (\Leb \otimes \mathcal{L})(x_i, h_i).\]

However, $g_N$ is a density, and since we are on $\R^{2(N+1)}$, having the law of $(X_n, H_n)_{0\leq n\leq N}$ greater than the \textit{probability measure} $g_N\mu\displaystyle\bigotimes_{i=1}^{N} (\Leb \otimes \mathcal{L})$ on every product of intervals means that they are equal. 

As such, we have the wanted result.
\end{proof}

\begin{remark}
The Markov chain $(H_n, \mathrm{t}_n)$ has a directed evolution. Indeed, its first component is almost surely decreasing, and its second component almost surely increasing. Moreover, we have the following:
\end{remark}

\begin{proposition}\textbf{Limit of the Markov chain}\label{limite}

Let $(H_n, \mathrm{t}_n)$ be the Markov chain defined in Corollary \ref{Markov}. Then, almost surely, $H_n$ goes to $\sup \mathrm{supp}(\mathcal{L})$ and $t_n$ goes to zero when $n$ goes to infinity.
\end{proposition}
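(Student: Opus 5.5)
We have to prove that $H_n \to M := \sup\mathrm{supp}(\mathcal{L})$ and $\mathrm{t}_n \to 0$ almost surely. Both sequences are monotone: $H_n$ is non-decreasing and $\mathrm{t}_n$ is non-increasing and non-negative. So the limits $H_\infty \le M$ and $\mathrm{t}_\infty \ge 0$ exist almost surely, and only their identification remains. The plan is to work directly on the landscape rather than through the kernel, using the geometric meaning of blockage.

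\emph{Step 1: $\mathrm{t}_n \to 0$.} First I will check that $X_n \to +\infty$. Since $\{x_i\}$ is locally finite and the $X_n$ are distinct buildings with $X_n$ strictly increasing, $X_n$ cannot accumulate. By the convex-combination identity of Proposition \ref{geomsimple}, the slope from $(x_0,h_0)$ to $(X_n,H_n)$ is a convex combination of $\mathrm{t}_1,\dots,\mathrm{t}_n$, and all of these are at least $\mathrm{t}_n$. Hence
\[H_n - h_0 \;\geq\; \mathrm{t}_n\,(X_n - x_0)\;\geq\; \mathrm{t}_\infty (X_n - x_0).\]
If $\mathrm{t}_\infty>0$, this forces $H_n \to \infty$. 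That contradicts $H_n \le M$ when $M<\infty$. When $M=\infty$, I would instead use the integrability of $\mathcal{L}$: by Lemma \ref{dessous_diago} with a line of any positive slope, almost surely only finitely many buildings lie above $h_0 + s(x-x_0)$ for each rational $s>0$. This still yields a contradiction, since infinitely many $(X_n,H_n)$ lie above the line of slope $\mathrm{t}_\infty/2$.

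\emph{Step 2: $H_n \to M$.} Suppose $H_\infty < M$ on some event of positive probability. Fix a rational $c$ with $H_\infty<c<M$, so that $p:=\mathcal{L}(]c,\infty[)>0$. By the strong law for the marked PPP, almost surely there are infinitely many buildings with height $>c$ at arbitrarily large positions. Take such a building $(a,b)$ with $a > X_n$. By the definition of $\tau$, every building to the right of $X_n$ lies in the shade of $(X_{n+1},H_{n+1})$ from $(X_n,H_n)$, so
\[b - H_n \le \mathrm{t}_{n+1}(a - X_n).\]
The difficulty is that $a - X_n$ may be large, so this inequality alone gives no contradiction. The cleaner route is to argue that since every building beyond $X_n$ lies under a line from $(X_n,H_n)$ of slope $\mathrm{t}_{n+1}$, that line must pass above heights close to $M$ eventually. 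The precise version would go as follows: the building after $X_n$ that achieves the maximal slope is itself the next blocking building, and if some building of height $>c$ exists to the right, then $\tau^{(k)}$ eventually reaches height at least $c$.

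To make this precise, I would show that the blockage sequence from $(X_n,H_n)$ is exactly the upper concave majorant (the "upper convex hull") of the buildings to the right. Every building $(a,b)$ with $a\ge X_n$ lies below this concave hull, and $H_k \to H_\infty$ means the hull is bounded by $H_\infty$ at infinity. Concretely, the hull value at $a$ is at most $H_k + \mathrm{t}_{k+1}(a-X_k)$ for the $k$ with $X_k \le a < X_{k+1}$, and this is at most $H_{k+1}\le H_\infty$. So every building beyond $X_0$ has height at most $H_\infty<c$, which contradicts the existence of a building of height $>c$ to the right of $x_0$. That existence holds almost surely by Lemma \ref{dessous_diago} with $f\equiv c$ on $[x_0,\infty[$, whose probability is $\exp(-\infty)=0$. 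Taking the union over rational $c$ finishes the proof. The key step, and the expected obstacle, is this concave-majorant bound; it uses Corollary \ref{increase} to guarantee that consecutive segments cover all of $[x_0,\infty[$ (since $X_k\to\infty$ by Step 1) and that buildings between $X_k$ and $X_{k+1}$ lie below the chord.
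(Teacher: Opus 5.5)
Your proof is correct, and for $\mathrm{t}_n\to 0$ it takes a genuinely different route from the paper.

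\textbf{How the paper argues.} The paper first proves $H_n\to S$: no tall building can be skipped, so $H_n$ dominates the maximum of the first $n$ i.i.d.\ heights. Only then does it get $\mathrm{t}_n\to 0$, analytically. It integrates the kernel of Corollary \ref{Markov} to bound $\p(\mathrm{t}_n\ge\alpha,\,H_n\ge A)$ by a multiple of $\int_A^{\infty}h\,\mathrm{d}\mathcal{L}(h)$, uniformly in $n$. It combines this with $H_n\to S$ in probability to get $\mathrm{t}_n\to 0$ in probability, then upgrades to almost sure convergence by monotonicity.

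\textbf{Your Step 1.} It is pathwise and does not depend on Step 2. The convex-combination identity gives $H_n-h_0\ge \mathrm{t}_\infty(X_n-x_0)$ with $X_n\to\infty$. So a positive $\mathrm{t}_\infty$ would place infinitely many distinct buildings strictly above the line through $(x_0,h_0)$ of any slope $s\in\,]0,\mathrm{t}_\infty[$. That is impossible, because those exceedances form a PPP of finite total intensity $\frac{\lambda}{s}\int_{h_0}^{\infty}(1-F(u))\,\mathrm{d}u$.

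\textbf{What each route buys.} Yours is a shorter argument that uses only $X_n\to\infty$ and the fact that only finitely many buildings lie above any line of positive slope; it never uses the transition kernel or the convergence of $H_n$. The paper's route is tied to the explicit kernel and produces an $n$-uniform estimate on $\p(\mathrm{t}_n\ge\alpha,\,H_n\ge A)$, which could be made quantitative.

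\textbf{Your Step 2.} This is essentially the paper's first step. For buildings with $X_k<a<X_{k+1}$, your chord bound $b\le H_k+\mathrm{t}_{k+1}(a-X_k)\le H_{k+1}$ is exactly the paper's observation that $H_{k+1}$ dominates the heights between $X_k$ and $X_{k+1}$. You then conclude from the a.s.\ existence of a building taller than $c$ to the right of $x_0$, rather than by comparison with a maximum of i.i.d.\ heights.

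\textbf{Minor points.}
\begin{enumerate}
\item The a.s.\ finiteness of exceedances comes from the thinning in the proof of Lemma \ref{dessous_diago}, not from its statement, which only gives the probability of having none.
\item Your case split on whether $M$ is finite is unnecessary, since the integrability argument covers both cases.
\item In Step 2, neither Corollary \ref{increase} nor the concave-majorant language is needed. Covering $]x_0,\infty[$ by the intervals $]X_k,X_{k+1}]$ only uses $X_k\to\infty$. The chord bound is just the definition of $\tau$ together with $\mathrm{t}_{k+1}\ge 0$.
\item The contradiction framing can be dropped: the argument directly gives $H_\infty> c$ a.s.\ for each rational $c<M$.
\end{enumerate}
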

\begin{remark}
This, among other things, implies that $X_n$ almost surely goes to infinity when $n$ goes to infinity, but that could be simply deduced from the fact that the set of bases of the buildings $\{x_i\}$ is locally finite, and $(X_n)$ is an increasing sequence of points of the said set.
\end{remark}
\begin{proof}
The main idea is to notice that no tall building can be "skipped" by the blocking building sequence, which yields the first limit. Then, knowing the limit of the blocking heights, the sparsity of tall buildings yields the fact that blockage angles go to zero.\\

More formally, let us denote $S : = \sup \mathrm{supp}(\mathcal{L})$. 

On the one hand, from the definition of $\tau$, it is clear that $H_{n+1}$ is the maximum of the heights of the buildings between $X_n$ and $X_{n+1}$. As such, $H_n$ is greater than the maximum of the first $n$ from the origin (since $(X_n)$ is strictly increasing). However, this maximum goes almost surely to $S$, since it is the maximum of $n$ independent random variables of law $\mathcal{L}$. As such, we have the first convergence announced.\\

On the other hand, we have, for $A<S$ and $\alpha>0$ given,

\begin{align*}
    \p\left(t_n\geq \alpha, H_n\geq A\right) &= \int_\alpha^{+\infty}\int_{A}^{+\infty}\int_{\R_+}\int_{\R_+} \mathds{1}_{h> H, t\leq  T} \frac{h-H}{t^2}\lambda \exp\left(-\lambda \primi(H)(\frac{1}{T}-\frac{1}{t})\right)\\
    & \qquad\qquad\qquad\qquad\qquad\qquad\qquad\qquad\qquad\rm{d}\p_{h_{n-1}}(H)\rm{d}\p_{t_{n-1}}(T)\rm{d}\mathcal{L}(h)dt\\
     &\leq \int_\alpha^{+\infty}\int_{A}^{+\infty}\frac{h}{t^2}\lambda \rm{d}\mathcal{L}(h)dt\\
    &\leq \int_{A}^{+\infty}\frac{h}{\alpha^2}\lambda \rm{d}\mathcal{L}(h).
\end{align*}

As such, since $\mathcal{L}$ has a finite first moment, for $\alpha$ and $n$ fixed, $ \p\left(t_n\geq \alpha, H_n\geq A\right)$ goes to zero when $A$ goes to $S$ (indeed, monotone convergence yields $\e[H\mathds{1}_{H\geq A}]\rightarrow S\mathcal{L}(S) = 0$ if $S$ is finite or directly $0$ when $S = +\infty$).\\

Let $\alpha>0$ be fixed. For any $\epsilon>0$, we have the following :
\begin{itemize}
    \item From what we just computed, there is a certain $A< S$ such that for all $n$,  $\p\left(\mathrm{t}_n\geq \alpha\right) = \frac{\epsilon}{2}+ \p\left( H_n< A\right)$.
    \item Since the convergence of $H_n$ to $S$ when $n$ goes to infinity is almost sure, it also holds in probability. Thus, there is a certain $n_0$ starting from which $\p\left( H_n< A\right)$ is lower than $\frac{\epsilon}{2}$.
\end{itemize}
We proved here that $\mathrm{t}_n$ goes to zero in probability when $n$ goes to infinity, which, since $\mathrm{t}_n$ is almost surely non-increasing, means that $\mathrm{t}_n$ almost surely goes to zero when $n$ goes to infinity.

Hence, we have the two wanted results.
\end{proof}

\subsection{Mass-at-a-maximum-height case}\label{max_height}

So far, we stated results for the case where there is no mass on the maximum of the support of $\mathcal{L}$, since our definition of blockage does not hold in the case where such a mass exists. However, there are some ways to mend this definition that allow one to consider this maximum height case. Here are two schemes $\tau_1$ and $\tau_2$ that extend $\tau$ to a height distribution with mass on its maximum $S$ (which we refer as \textit{max-height case} in the following):
\begin{itemize}
    \item If the framework is focused on the connection to a satellite, with no restriction of range or connection to users on the ground, one can then consider that, once the maximum height is reached, no further angular improvement can be made, and as such consider the blocking building of a max height building to be itself:
    \[\tau_1(x, S) := (x, S).\]
    \item If one is interested in the interconnection between grounded users, then one might want to consider the blocking building of a max height one to be the next max height building, as an "optimal way" to relay the signal:
    \[\tau_2(x, S) := (\min\{x: (x, S)\in \landsc\},S).\] 
\end{itemize}

In the first case, we can define by convention the blockage angle for a max height building to be $0$.

\begin{remark}
One can formulate the schemes $\tau_1$ and $\tau_2$ in a deterministic context, in a way that does not depend on an external parameter being the law $\mathcal{L}$:
\begin{itemize}
    \item $\tau_1 (x, y):= \tau(x, y)$ when it is well defined and $(x, y)$ otherwise.
    \item $\tau_2 (x, y) := \min\{(a, b)\in \landsc: \frac{b-y}{a-x} = \underset{\substack{(a, b) \in \landsc \\a>x}}{\max} \frac{b-y}{a-x}\}$ when this is well defined, which happens if and only if $\underset{\substack{(a, b) \in \landsc \\a>x}}{\max} \frac{b-y}{a-x}$ is well defined.
\end{itemize}
\end{remark}

\begin{remark}
There can be other schemes in different settings. For instance in a finite range case, one can look at the \textit{furthest} building in range that has maximum height.
\end{remark}

These schemes do not change much in terms of the formulation of the evolution we gave in Theorem \ref{evo} and Lemma \ref{Markov}. 

One the one hand, $\tau_1$ changes the Markov chain so that any $(S, t)$ can only transition to a cemetery state $(S, 0)$, and the rest of the evolution stays true up to a small change: in the formulation we had as a product for the density, one has to add to each term an indicator $\mathds{1}_{H_{n-1} = S, H_{n} = S, t_n = 0}$. In other words, if one is to condition to $(X_{N-1}, H_{N-1})$, the conditional expectation of $(X_{N}, H_{N})$ has law:
\[\mathds{1}_{h>H_{N-1}}\mathds{1}_{t_{N-1}\geq t^{N}}\lambda \exp\left(-\lambda\primi(H_{N-1})(\frac{1}{t_{N-1}}-\frac{1}{t})\right)\Leb\otimes\mathcal{L} + \mathds{1}_{H_{N-1} = S}\delta_{^{N-1}, S}.\]

On the other hand, when studying the second scheme $\tau_2$, the same behaviour of cemetery state appears for the Markov chain $(H_n, t_n)$. However, the density of the evolution stays exactly the same as in Theorem \ref{evo}, where one takes the convention $\primi(S)(\frac{1}{t_i}-\frac{1}{t_{i+1}}) = \mathcal{L}(S)\times (x_{i+1}-x_i)$, or in other words, the conditional law of $(X_{N}, H_{N})$ conditioned on $(X_{N-1}, H_{N-1})$ is:
\[\left(\mathds{1}_{h>H_{N-1}}\mathds{1}_{t_{N-1}\geq t^{N}}\lambda \exp\left(-\lambda\primi(H_{N-1})(\frac{1}{t_{N-1}}-\frac{1}{t})\right)+\mathds{1}_{H_{N-1} = S}\mathds{1}_{h=S}\lambda\exp\left(-\lambda\mathcal{L}(S)(x-X_{N-1})\right)\right)\Leb\otimes\mathcal{L} .\]

In both cases, the proof can be obtained using the same method as in Theorem \ref{evo}, by simply considering the change of conditioning induced by the new schemes. Alternatively, one can also get those results from the Markov chain behaviour by noticing that, up until the hitting time of $\{S\}\times \R_+$, the existence of a maximum height does not change the transitions of the blockage chain defined in Corollary \ref{Markov}.\\

Moreover, we have the following limit behaviour for those schemes:

\begin{proposition}\textbf{Limit of the Markov chain: Max-Height case.}

Let us suppose that $\mathcal{L}$ puts non zero mass on $S$, the maximum of its support. Then, almost surely, $(H_n, t_n)$ goes to $(S, 0)$ in finite time:
\[\p\left(\exists n \in N : (H_n, t_n) = (S, 0)\right) = 1.\]
Moreover, we have the following behaviour for $(X_n)$:
\begin{itemize}
    \item Under scheme $\tau_1$, it is constant starting from the hitting time of $\{S\}\times \R_+$ (which, from the above, is almost surely finite).
    \item Under scheme $\tau_2$, it almost surely goes to $+\infty$.
\end{itemize}

\end{proposition}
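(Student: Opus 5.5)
The plan is to reuse the ``no tall building can be skipped'' argument from the proof of Proposition \ref{limite}, made sharper by the atom $\mathcal{L}(\{S\})=p>0$.

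\textbf{Step 1: hitting $\{S\}$ in finite time.} First I show that, as long as $H_n<S$, the blocking building $\tau(X_n,H_n)$ is well defined under either scheme (the argmax is unique because $H_n<S$ forces a positive slope). Then I show the process cannot jump past a max-height building. Suppose $(a,S)\in L$ with $a>X_n$ and $H_n<S$. The blocking building $(X_{n+1},H_{n+1})$ maximises the slope from $(X_n,H_n)$. If $X_{n+1}>a$, Proposition \ref{geomsimple} together with the maximality of the slope forces $H_{n+1}\geq S$, so $H_{n+1}=S$. Hence the first max-height building to the right of $X_0$, call it $(a^*,S)$, is never jumped over: either some $H_n=S$ with $X_n\le a^*$, or the chain lands on $a^*$ itself.

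It remains to show the chain reaches $a^*$ in finitely many steps. This holds because the bases of $L$ are locally finite and $(X_n)$ is strictly increasing while $H_n<S$, so only finitely many buildings lie in $]X_0,a^*]$. Also, $a^*<\infty$ almost surely, since the max-height buildings form a thinned PPP of intensity $\lambda p>0$. Once $H_n=S$ and the next slope is $0$, the state is $(S,0)$. Under $\tau_1$ this holds by convention. Under $\tau_2$ the next building is again at height $S$, so $\mathrm{t}_{n+1}=0$ and the state $(S,0)$ is absorbing. This gives $\p(\exists n:(H_n,\mathrm{t}_n)=(S,0))=1$.

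\textbf{Step 2: behaviour of $(X_n)$.} Under $\tau_1$, once $H_n=S$ we have $\tau_1(X_n,S)=(X_n,S)$, so $X_m=X_n$ for all $m\ge n$, with $n$ almost surely finite by Step 1. Under $\tau_2$, after the hitting time each step moves to the next point of the thinned PPP of max-height buildings, which is strictly to the right. A strictly increasing sequence of points of a locally finite set tends to $+\infty$. Equivalently, the increments are i.i.d. $\mathrm{Exp}(\lambda p)$ by the strong Markov property, and the strong law of large numbers gives the same conclusion.

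\textbf{Main obstacle.} The delicate point is Step 1's claim that the scheme is well defined and the argmax unique at every step before the hitting time. This requires $H_n<S$ and uses the remark after the model definition: alignments of tops occur almost surely only at atom heights, so they are irrelevant before height $S$ is reached. I would handle this by invoking that remark directly rather than reproving it.
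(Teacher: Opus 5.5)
Your proof is correct and follows essentially the paper's argument. The paper notes that $H_n$ dominates the highest of the first $n$ buildings to the right of $x_0$ (the ``no tall building can be skipped'' fact), which is your observation that the first max-height building $(a^*,S)$ cannot be jumped over while only finitely many buildings precede it; the behaviour of $(X_n)$ then follows, as in the paper, from $\tau_1$ being stationary at height $S$ and from local finiteness of the bases under $\tau_2$. (As a minor sharpening, the convex-combination identity of Proposition~\ref{geomsimple} gives $H_{n+1}>S$ in your skipping scenario, so such a jump is simply impossible rather than forcing $H_{n+1}=S$.)
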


\begin{proof}
The first part directly comes from the fact that until the hitting time of $\{S\}\times \R_+$, both schemes coincide, and that, in a similar fashion as what was the case for $\tau$, $\tau_2^{(n)}(x_0, h_0)$ stochastically dominates the highest of the $n$ first buildings at the right of $x_0$.

The second part simply comes from the fact that $\tau_1$ stays constant after hitting the max height, while the bases of the buildings reached through $\tau_2$ form an increasing sequence of a locally finite set.
\end{proof}

\begin{remark}
So far, excpet the convergence result, we never used the homogeneous characteristic of the PPP of the buildings' positions, nor the identically distributed characteristic of the heights. In fact, since our results rely mostly on the thinning Lemma \ref{dessous_diago}, and this lemma is still valid in the in-homogeneous case or with position-dependent marks, our methods can be adapted in a straightforward manner to those cases. Note that, in the following section, while our numerical study, which depends on a deterministic geometrical study, still holds, the qualitative vertex shift classification, which depends heavily on the unimodular aspect of the underlying graph, needs a homogeneity assumption.
\end{remark}

\section{The Eventual Relay Tree and the Backwards Problem}\label{tree}

So far, we considered the evolution of visibility and blockage experienced by a typical user. In other words, if we consider the relaying scheme $\tau$ as directed edges between the buildings (and up to adding a Poisson Process of users with null height on the line), we studied the descendants of a typical user. However, it can be of interest to understand other aspects of the graph stemming from those directed edges. For instance, a motivation and focus for this section is to understand the converse problem of the number of connections passing through a given building, which is linked to dimensioning problems for the RISs one might install on the tops of buildings. 

As such, a first part of this section is dedicated to better understanding the connection between buildings induced by the scheme $\tau$ or one of the alternative schemes $\tau_1$ and $\tau_2$. In the following, when it is not precised, we denote $\tau$ for the schemes $ \tau_2$, as it strictly extends the first defined scheme $\tau$. However, unless mentioned otherwise, the results also extend to scheme $\tau_1$, albeit with some changes in the definitions and notations.

\subsection{Eventual Relay}

In this subsection, we assume $\landsc$ to be a given landscape such that the scheme $\tau_2$ is well defined from any point.

\begin{definition}\textbf{Eventual Relay Forest.}

We say that a point $(x, y)$ is eventually relayed by a building $(a, b)$ if there is an $n\in \N$ such that
\[\tau^{(n)}(x, y) = (a, b).\]

Moreover, we define the \textit{eventual relay graph} as the directed graph $\graph = (V, E)$:
\begin{itemize}
    \item Set of vertices $V := \landsc$, the set of buildings tops;
    \item Set of (directed) edges $E := \{(b, \tau(b):b\in \landsc\}$, the edges going from every building to its blocking building.
\end{itemize}

Clearly, from the increasing characteristic of the $x$-coordinate of the blockage sequence, the only possibility for the existence of a cycle in $\graph$ is if we take the scheme $\tau_1$, in which case all cycles are of length one. By abuse of notation, we call  $\graph$ the \textit{Eventual Relay Forest}, as its cycle structure is trivial.\\
One can notice that saying that a building $b_1$ is eventually relayed by a building $b_2$ is equivalent to saying that $b_2$ is a descendant of $b_1$ in the Eventual Relay Forest.
\end{definition}

Since we are not always only interested in the building-to-building connection, we further introduce the following notation:
\begin{definition}\textbf{Zone of Eventual Relaying.}

For a building $(a, b)$, we define its \textit{zone of eventual relaying} $\zone (a, b)$ as the set of points of $\Half$ that are eventually relayed by $(a, b)$:
\[\zone (a, b) := \{(x, y)\in \Half : \exists n\in \N : \tau^{(n)}(x, y) = (a, b).\]
\end{definition}

In order to better understand the zones of eventual relaying, let us first give a characterisation of the eventual relaying by a given building. For understanding the "forward" relaying, an appropriate object to consider was the \textit{shade} of buildings from a point. Here, since the goal is to understand the backward process, a key object to consider is the \textit{Reverse Shade} of a given building:

\begin{definition}\textbf{Reverse Shade, Shadow-Cutting Building.}

Let $(\alpha, \beta)\in \landsc$ be a building. We define its \textit{reverse shade} $\rs(\alpha, \beta)$ as the shade (on the left of a) of $(\alpha, \beta)$ from $\tau(\alpha, \beta)$ ($=(\alpha_b, \beta_b)$):
\[\rs(\alpha, \beta) :=  \left\{(x, y) \in \Half: x\leq \alpha \text{ and } \frac{\beta-y}{\alpha-x} \geq \frac{\beta_b-\beta}{\alpha_b-\alpha}\right\}.\]

From there, we further define the \textit{shadow-cutting building} $\rb$ of $(\alpha, \beta)$ as the closest building to the left of $(\alpha, \beta)$ that does not lie in $\rs(\alpha, \beta)$:
\[\rb := \max\left\{(x, y) \in L: x\leq \alpha \text{ and } \frac{\beta-y}{\alpha-x} < \frac{\beta_b-\beta}{\alpha_b-\alpha}\right\}.\]
        \begin{figure}[h]\label{reverse_shade_fig}
            \centering
            \includegraphics[width=0.75\textwidth]{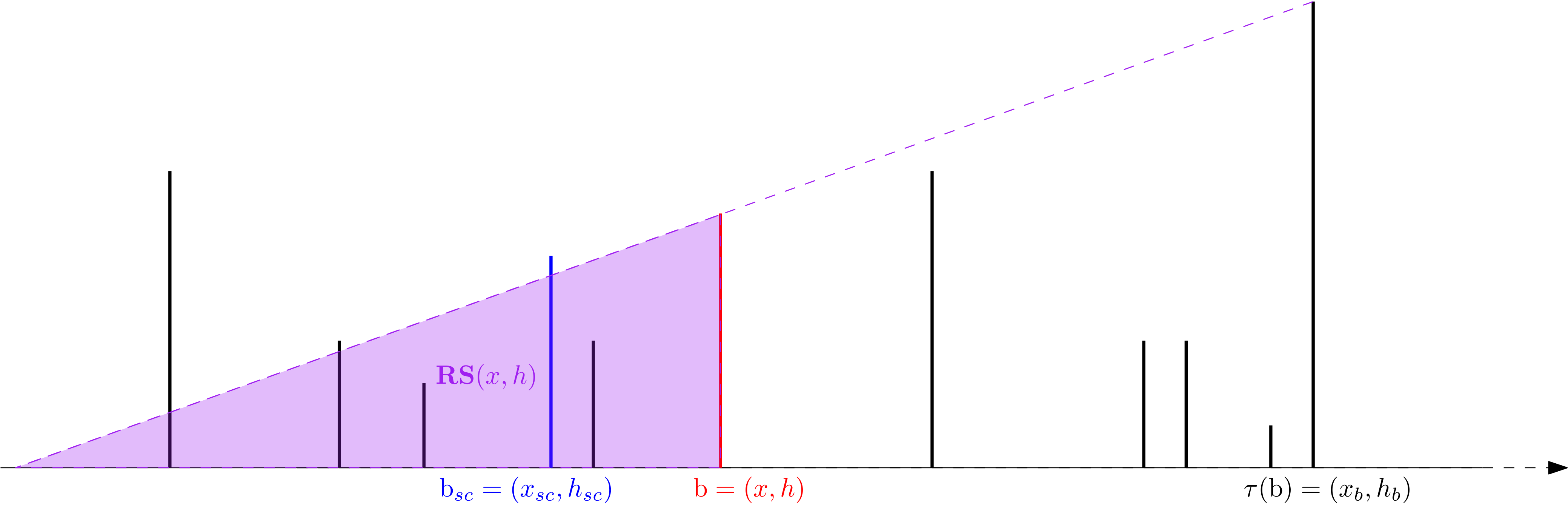}
            \caption{The shadow-cutting building can be seen as the one on which the shadow of $b$ stops.}
        \end{figure}\\
\end{definition}

\begin{remark}\label{changetau1}
If one is to consider scheme $\tau_1$, then the definitions slightly change: as we see, the main characteristic of the shadow-cutting-building is that it is "the first to not be relayed by $(\alpha, \beta)$. Hence, the proper definition for $\tau_1$ are found by interchanging the large and strict inequalities:
\begin{align*}
    \rs_{1}(\alpha, \beta) &:=   \left\{x, y) \in \Half: x\leq \alpha \text{ and } \frac{\beta-y}{\alpha-x} < \frac{\beta_b-\beta}{\alpha_b-\alpha}\right\}\\
    \rb_{1}  &:= \max\left\{(x, y) \in L: x\leq \alpha \text{ and } \frac{\beta-y}{\alpha-x} < \frac{\beta_b-\beta}{\alpha_b-\alpha}\right\}.
\end{align*}
One can notice that it does not change for Poisson building processes, outside of the max height case. All the following results (when not mentioned otherwise), save for the computational ones, extend in a straightforward way to those definitions, by changing $F$ to \[\tilde{F}: t \mapsto \mathcal{L}([0, t[).\]
\end{remark}

A first fact one can notice is that, for any building $(\alpha, \beta)$, all the buildings located between $\alpha$ and $\alpha_{sc}$ is inside the reverse shade of $(\alpha, \beta)$. This intuition of the shadow-cutting building as the one to "stop the influence" of the reverse shade is in fact embodied by the following result:

\begin{proposition}\textbf{Characterisation of the eventual relaying.}\label{charact}

For $(\alpha, \beta)$ a building and $(x, y)$ a point of $\Half$, $(x, y)$ is eventually relayed by $(\alpha, \beta)$ if and only if the two following conditions are fulfilled:
\begin{itemize}
    \item $(x, y)$ is in the reverse shade of $(\alpha, \beta)$.
    \item The base $(x, y)$, $x$, is between $\alpha$ the base of $(\alpha, \beta)$ and $a_{sc}$ the base of its shadow-cutting building: $x\in [\alpha_{sc}, \alpha]$.
\end{itemize}
In other words,
\[\zone(\alpha, \beta) = \{(x, y)\in \rs(\alpha, \beta): x> \alpha_{sc}\}.\]
       \begin{figure}[h]\label{eventual_connection_fig}
            \centering
            \includegraphics[width=0.75\textwidth]{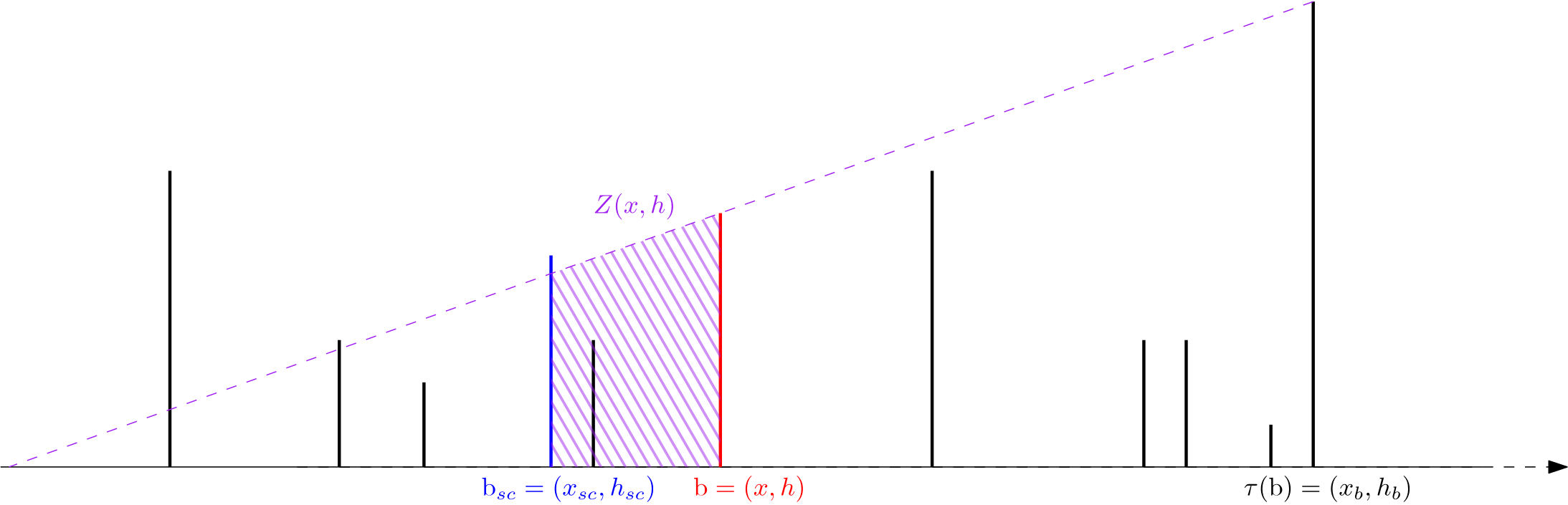}
            \caption{The buildings that are eventually relayed by $b$ are those located between $b$ and its shadow-cutting building.}
        \end{figure}
\end{proposition}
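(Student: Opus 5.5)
Write $b=(\alpha,\beta)$, $\tau(b)=(\alpha_b,\beta_b)$, and $s=\frac{\beta_b-\beta}{\alpha_b-\alpha}$ for the slope of $b$'s outgoing edge. Then $\rs(b)$ is the set of points to the left of $\alpha$ from which $b$ is seen with slope at least $s$. The whole argument rests on one elementary observation, which is Proposition \ref{geomsimple} applied to the triple (point, $b$, building to the right of $b$). Every building to the right of $\alpha$ lies below the line of slope $s$ through $b$. Hence if $(x,y)$ satisfies $\frac{\beta-y}{\alpha-x}\geq s$, then $b$ sees the top of every building right of $\alpha$ under slope at most $s$. It follows that, from $(x,y)$, the slope towards $b$ is at least the slope towards any building right of $\alpha$. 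So when $(x,y)\in\rs(b)$, no building right of $\alpha$ can be the blocking building of $(x,y)$ ahead of $b$. For $\tau_2$, ties are resolved towards the smaller abscissa, which favours $b$ and matches the large inequality in $\rs$.

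\textbf{Sufficiency.} Take $(x,y)\in\rs(b)$ with $x\in]\alpha_{sc},\alpha]$. I argue by (downward) induction on the number of buildings of $\landsc$ with base in $]x,\alpha]$, which is finite by local finiteness. By the observation, $\tau(x,y)$ has base in $]x,\alpha]$.
\begin{itemize}
\item If $\tau(x,y)=b$, we are done.
\item Otherwise $\tau(x,y)=c=(c_1,c_2)$ with $x<c_1<\alpha$, so $\alpha_{sc}<c_1$. By the definition of the shadow-cutting building, $c\in\rs(b)$. Now $c$ has strictly fewer buildings in $]c_1,\alpha]$, so the induction hypothesis gives that $c$, hence $(x,y)$, is eventually relayed by $b$.
\end{itemize}
The base case is when no building other than $b$ lies in $]x,\alpha]$; then the observation forces $\tau(x,y)=b$.

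\textbf{Necessity.} Suppose $\tau^{(n)}(x,y)=b$ with $n\geq1$. Since the abscissae of the blockage sequence are strictly increasing, $x<\alpha$.
\begin{itemize}
\item \emph{Membership in the reverse shade.} By Corollary \ref{increase}, the blockage slopes along the sequence are non-increasing. Applying Proposition \ref{geomsimple} along the chain $(x,y),\tau(x,y),\dots,b$ shows that the slope from $(x,y)$ to $b$ lies between the first slope $t_1$ and the last slope $t_n$. Since $t_1\geq t_n\geq t_{n+1}=s$, we get $\frac{\beta-y}{\alpha-x}\geq s$, that is $(x,y)\in\rs(b)$.
\item \emph{Position relative to the shadow-cutting building.} Suppose instead that $x\leq\alpha_{sc}$. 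The building $(\alpha_{sc},\beta_{sc})$ is not in $\rs(b)$, so it lies strictly above the line of slope $s$ through $b$. Then the sequence $\tau^{(k)}(x,y)$ must jump over $\alpha_{sc}$ at some step. Say $\tau^{(k)}(x,y)=p$ with base at most $\alpha_{sc}$ and $\tau^{(k+1)}(x,y)=q$ with base greater than $\alpha_{sc}$ (the case where $p$ is the shadow-cutting building itself included). By Proposition \ref{geomsimple} applied to $p$, the shadow-cutting building, and $b$, the building $q$ must see $b$ under slope at least the jump slope. Moreover, $q$ lies in $\rs(b)$ by the previous item applied to $q$ itself, which is also relayed by $b$. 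Comparing the slopes from $p$ to the shadow-cutting building and from $p$ to $q$ should then contradict the fact that $q$ is the argmax from $p$, because the shadow-cutting building sits above the slope-$s$ line through $b$ while $q$ and $b$ sit on or below lines of slope at least $s$.
\end{itemize}

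The main obstacle is this last convexity comparison, in particular making it hold with the correct strict/large inequalities so that ties are handled consistently with scheme $\tau_2$ (or $\tau_1$ via Remark \ref{changetau1}). Should the direct comparison turn out to be delicate, my first fallback is a minimal counterexample: take the last building relayed by $b$ that lies left of $\alpha_{sc}$, and apply the observation to its blocking building to derive a contradiction.
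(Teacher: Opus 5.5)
Your sufficiency induction and your first necessity item are correct. Your first item writes the slope from $(x,y)$ to $b$ as a convex combination of the non-increasing blockage slopes, which is a more direct route than the paper's invariance argument for the complement of $\rs(b)$. The gap is in the crossing step, the only genuinely non-trivial comparison (it corresponds to the paper's second fact). Write $c=\rb$ and $\ell$ for the line of slope $s$ through $b$.

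Your intermediate claim that $q$ sees $b$ under slope at least the jump slope is false. Since $q=\tau(p)$, the slope from $p$ to $q$ is at least the slope from $p$ to $b$, and Proposition \ref{geomsimple} on $(p,q,b)$ then makes the slope from $q$ to $b$ \emph{at most} the jump slope. So $(p,c,b)$ is the wrong triple. More importantly, the final comparison is only asserted (``should then contradict''), and it does not follow from the facts you list ($c$ strictly above $\ell$, $q$ and $b$ on or below it). If $p$ sits high above $\ell$, it can see $q$ more steeply than $c$: take $s=0$, $b=(10,10)$, $c=(5,11)$, $q=(7,9)$, $p=(4,100)$. The missing ingredient is that $p$ is itself eventually relayed by $b$. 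Your first item therefore gives $p\in\rs(b)$, i.e.\ $p$ lies on or below $\ell$.

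With this the step closes cleanly. Since $c\notin\rs(b)$, item 1 already excludes $p=c$. That case is ruled out separately rather than ``included'' in a slope comparison, so $p_1<\alpha_{sc}$; assume $x<\alpha_{sc}$, see the caveat below. As $p$ is on or below $\ell$ and $c$ strictly above it, the slope from $p$ to $c$ exceeds $s$. As $q$ is on or below $\ell$ with $q_1>\alpha_{sc}$, the slope from $c$ to $q$ is below $s$. Proposition \ref{geomsimple} on $(p,c,q)$, with strictly positive weights, then makes the slope from $p$ to $q$ strictly smaller than that from $p$ to $c$, contradicting $q=\tau(p)$. The inequality is strict, so the tie-breaking of $\tau_2$ plays no role, which settles your worry about strict versus large inequalities.

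One caveat: the argument needs $p_1<\alpha_{sc}$. For $k=0$, a non-building point $(\alpha_{sc},y)\in\rs(b)$ does not have $c$ as a candidate, and it is in fact relayed by $b$, since your sufficiency induction applies verbatim. So your necessity claim should assume $x<\alpha_{sc}$. The correct zone is the bullet version $x\in[\alpha_{sc},\alpha]$ rather than the displayed $x>\alpha_{sc}$. This is a null set, and the paper's proof also skips it.
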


From this characterisation, we can deduce the following for schemes $\tau$ and $\tau_2$:

Let $\mathrm{b}_1 = (x_1, h_1)$ and $\mathrm{b}_2 = (x_2, h_2)$ be two buildings, with $\mathrm{b}_1$ to the left of $\mathrm{b}_2$ ($x_1\leq x_2$). Then, since the sequence of the bases of the buildings $\tau^{(n)}(b_1)$ (let us denote this sequence $(x_1^{n}$) goes to infinity, this means that there is a certain $n_0$ such that $x_1^{n_0}>x_2$. But from the characterisation of eventual relaying, this means that $x_{1, sc}^{n_0}\leq x_1\leq x_2$, which in turn means that $\mathrm{b}_2$ is eventually relayed by $\tau^{(n_0)}(\mathrm{b}_1)$. This observation leads to the

\begin{corollary}
    Under schemes $\tau$ and $\tau_2$, the Eventual Relay Forest is a tree.
\end{corollary}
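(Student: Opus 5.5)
The plan is to show that $\graph$ is connected and acyclic.

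\emph{Acyclicity.} Under $\tau$ and $\tau_2$ the blocking building of $(x,y)$ has base strictly greater than $x$: the argmax is taken over $a>x$, and for $\tau_2$ the next max-height building also lies strictly to the right. Hence the base strictly increases along every directed path, so there is no directed cycle. Every vertex has out-degree exactly one, so a cycle in the underlying undirected graph would force some vertex on it to have two outgoing edges on the cycle, or else the cycle would be a directed cycle; both are impossible. So $\graph$ is a forest, and the only remaining point is that it has a single component.

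\emph{Connectedness.} Fix two buildings $\mathrm{b}_1=(x_1,h_1)$ and $\mathrm{b}_2=(x_2,h_2)$ with $x_1\leq x_2$. I will show they have a common descendant.

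First, the bases $x_1^{n}$ of $\tau^{(n)}(\mathrm{b}_1)$ go to $+\infty$. They form a strictly increasing sequence in the locally finite set of bases, so they cannot accumulate; this is the remark following Proposition \ref{limite}, and it also holds under $\tau_2$ by the max-height proposition. So there is a first index $n_0$ with $x_1^{n_0}>x_2$.

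Next, set $c:=\tau^{(n_0)}(\mathrm{b}_1)$. By definition $\mathrm{b}_1\in\zone(c)$, so Proposition \ref{charact} gives that the shadow-cutting base $\alpha_{sc}(c)$ satisfies $\alpha_{sc}(c)<x_1\leq x_2<x_1^{n_0}$.

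The key step is to check that $\mathrm{b}_2\in\rs(c)$. Recall that all buildings with base strictly between $\alpha_{sc}(c)$ and $x_1^{n_0}$ lie in $\rs(c)$, by the very definition of the shadow-cutting building as the closest building to the left outside $\rs(c)$. Since $\mathrm{b}_2$ is a building with base in that interval, $\mathrm{b}_2\in\rs(c)$.

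Applying Proposition \ref{charact} again then gives $\mathrm{b}_2\in\zone(c)$. So $c$ is a common descendant of $\mathrm{b}_1$ and $\mathrm{b}_2$, and the two lie in the same component.

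\emph{Main obstacle.} The delicate part is the boundary conventions. One must ensure strictness: $x_2$ must be strictly greater than $\alpha_{sc}$, since $\zone$ uses $x>\alpha_{sc}$. If $\mathrm{b}_2$ were itself the shadow-cutting building of $c$, this would fail. This is excluded because $\alpha_{sc}(c)<x_1\leq x_2$, where the first inequality holds because $\mathrm{b}_1$ is relayed by $c$.

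A second subtlety is the case $x_1=x_2$, i.e.\ two buildings sharing a base. This has probability zero for a Poisson building process, and can be handled by the lexicographic order convention.

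For $\tau_2$ the same argument applies verbatim, because Proposition \ref{charact} is stated for that scheme and $\rs$ uses the large inequality. This is what makes max-height buildings aligned horizontally relay each other, whereas under $\tau_1$ the argument breaks down, consistent with the statement excluding $\tau_1$.
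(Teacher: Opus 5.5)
Your proof is correct and follows the paper's argument. You push $\mathrm{b}_1$ along $\tau$ until its base passes $x_2$, then apply the reverse-shade characterisation (Proposition \ref{charact}) to conclude that $\tau^{(n_0)}(\mathrm{b}_1)$ also eventually relays $\mathrm{b}_2$. You also make explicit two points the paper leaves implicit: acyclicity, which the paper dismisses when defining $\graph$ via the strictly increasing bases, and the fact that $\mathrm{b}_2\in\rs(c)$ because every building strictly between $\alpha_{sc}(c)$ and the base of $c$ lies in the reverse shade. Your use of the strict bound $x>\alpha_{sc}$ is the right reading of the slightly inconsistent interval in Proposition \ref{charact}.
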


As such, in the following, we use the term Eventual Relay Tree, or Relay EFT, when speaking in the context of schemes $\tau$ and $\tau_2$.\\
Let us now prove Proposition \ref{charact}:

\begin{proof}[Proof of Proposition \ref{charact}.]
Let $(\alpha, \beta)$ be a fixed building, and $t$ the tangent of its blockage angle $\theta$. The proof relies on three facts, which are formalised and proved in what follows:
\begin{itemize}
    \item A point outside of $\rs(\alpha, \beta)$ is relayed $\tau (\alpha, \beta)$ rather than by $(\alpha, \beta)$.
    \item A point left of the shadow-cutting building $\rb$ is relayed by a building outside the reverse shade before being relayed by $(\alpha, \beta)$.
    \item A point satisfying both conditions of the proposition is relayed by $(\alpha, \beta)$ before being relayed by any building on its right.
\end{itemize}
All of three being direct consequences of Proposition \ref{geomsimple}.

       \begin{figure}[h]\label{expla_eventual}
            \centering
            \includegraphics[width=0.75\textwidth]{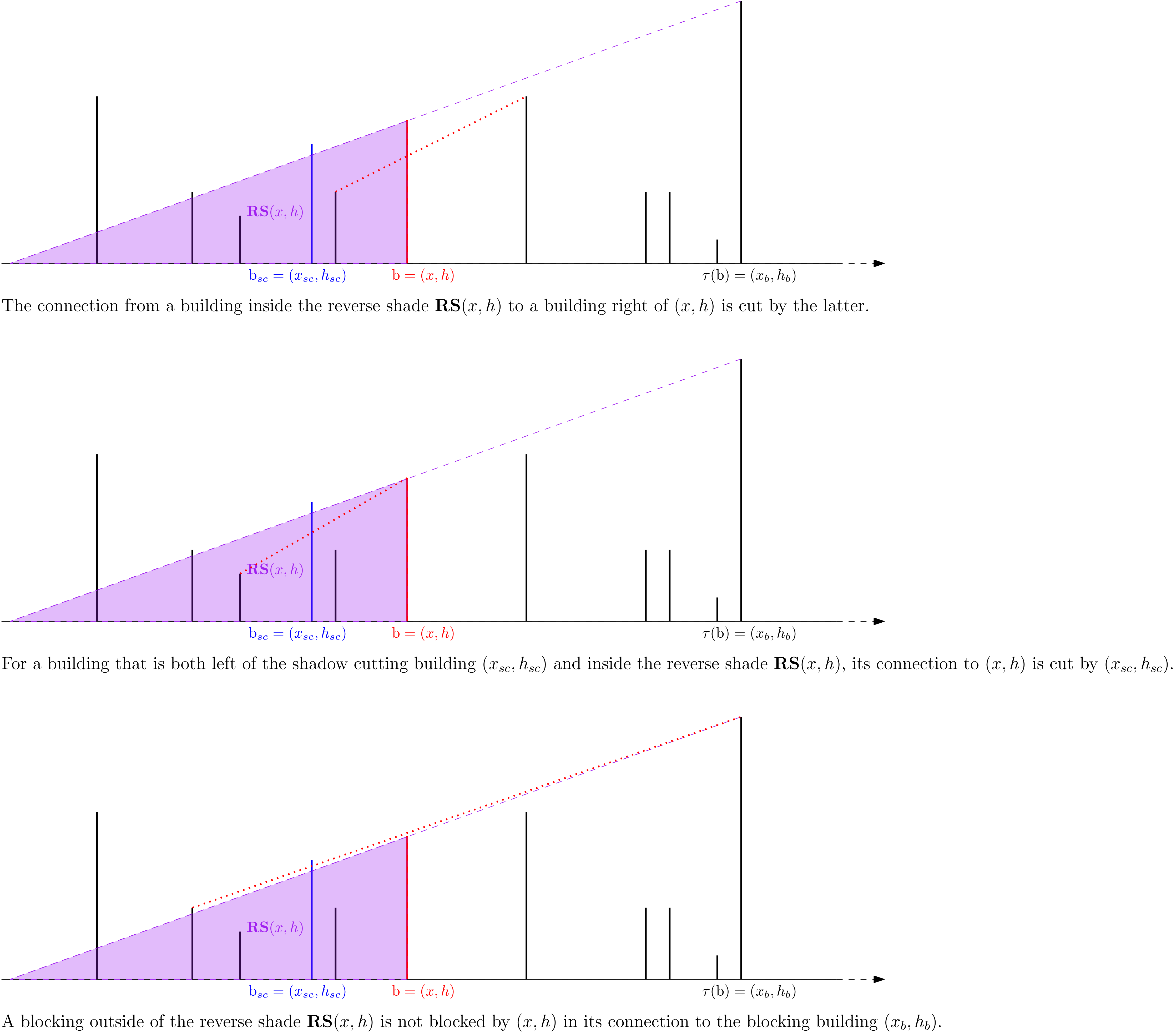}
            \caption{Representation of the three points above.}
        \end{figure}

First, let us show that, for a point outside of the reverse of $(\alpha, \beta)$, its blocking building is neither in the reverse shade of $(\alpha, \beta)$ nor $(\alpha, \beta)$ itself.

Let $(x, y)\notin \rs(\alpha, \beta)$ be given. This means that the angle $\theta '$ between $(x, y)$ and $(\alpha, \beta)$ is strictly lower than $\theta$. Thus, this means, from Proposition \ref{geomsimple}, that the angle between $(x, y)$ and $\tau(\alpha, \beta)$ is strictly between $\theta$ and $\theta '$, hence strictly greater than $\theta '$. This implies that $\tau (x, y) \neq (\alpha, \beta)$.\\
Similarly, for $(x', y')\in \rs(\alpha, \beta)$ such that $x'> x$, the angle between $(x', y')$ and $(\alpha, \beta)$ is greater than that between $(x, y)$ and $(\alpha, \beta)$, hence the fact that the angle between $(x, y)$ and $(x', y')$ is lower than the one between $(x, y)$ and $(\alpha, \beta)$. Since this is true for any building in the reverse shade of $(\alpha, \beta)$, this means that $\tau (x, y) \notin \rs(\alpha, \beta)$. This yields the fact that no building outside the reverse shade $\rs(\alpha, \beta)$ is eventually relayed by $(\alpha, \beta)$.

Now, let us show that if a point is in the reverse shade of $(\alpha, \beta)$, but left of the shadow-cutting building, then its blocking building is either on the left of the blocking building $\rb$ or the said shadow-cutting building itself.

Let $(x, y)\in \rs(\alpha, \beta)$ such that $x<\alpha_{sc}$ be given. From Proposition \ref{geomsimple}, we deduce the following:
\begin{itemize}
    \item The angle $\tilde{\theta}$ between $(x, y)$ and $(\alpha, \beta)$ is greater than $\theta$. Moreover, it is between $\theta '$ the angle formed by $(x, y)$ and $\rb$ and the angle formed by $\rb$ and $(\alpha, \beta)$. From the above observations, this means that $\tilde{\theta}$ is lower than $\theta '$.
    \item Since $\theta$ is the blockage angle of $(\alpha, \beta)$, this means that the angle between $(x, y)$ and any building to the right of $(\alpha, \beta)$ is lower than $\tilde{\theta}$, hence lower than $\theta '$.
    \item For $(x', y')\in \rs(\alpha, \beta)$ such that $x '>\alpha_{sc}$, the angle formed by $(x, y)$ and $(x', y')$ is between $\theta '$ and the ange between $\rb$ and $(x', y')$. From the observations in the previous case, this means that the angle between $(x, y)$ and $(x', y')$ is lower than $\theta '$.
\end{itemize}
Overall, this means that for any building $(a', b')$ right of $\rb$, the angle between $(x, y)$ and $(a', b')$ is lower than $\theta'$, which means that $\tau(x, y)$ is either left of $\rb$ or $\rb$ itself. This, together with the previous case, implies that no building left of the shadow-cutting building $\rb$ is eventually relayed by $(\alpha, \beta)$.

Lastly, we want to show that all points satisfying both conditions of the proposition are eventually relayed by $(\alpha, \beta)$:

Let $(x, y)\in \rs(\alpha, \beta)$ such that $x>\alpha_{sc}$ be given. We know that $\theta '$, the angle formed by $(x, y)$ and $(\alpha, \beta)$, is  greater than $\theta$. Since $\theta$ is the blockage angle of $(\alpha, \beta)$, this means that, for any building $(x ', y')$ with $x '> \alpha$, the angle formed by $(x, y)$ and $(x',  y')$ is between $\theta$ and $\theta '$, hence lower than or equal to $\theta '$. Which means that $\tau (x, y)$ cannot be right of $(\alpha, \beta)$. But since the sequence $(\tau^{(n)}(x, y))$ eventually goes to the right of $(\alpha, \beta)$, this means that there is a certain $n_0$ such that $\tau^{(n_0)}(x, y) = (\alpha, \beta)$.
\end{proof}
This characterisation of the zone of eventual relaying turns out to be a powerful tool for qualitative and quantitative study of the Eventual Relay Forests. Moreover, our results give key properties that are essential for understanding the infinite range case. A zone of eventual relaying is a connected part of the half plane, and is the right trapezoid with angle $\theta(\alpha, \beta)$ that has one of its edges be the segment between $(\alpha, 0)$ and $(\alpha, \beta)$.

However, more hypotheses need be made to derive either general graph properties on the Relay EFTs or quantitative estimates on the zones of eventual relay. As such, let us understand its implication in the case of Poisson building processes.

\subsection{Backward Problem in the Poisson case}

Thanks to Proposition \ref{charact}, we are able to get numerical estimates and to quantify some objects of interest in the study of the Relay EFT for Poisson building landscapes. However, this also allows for a more qualitative study of the said forest under more general hypothesis, which can then be further analyzed to the light of the quantitative results. Let us first give a few of these results.

In the remainder of this section, $\mathcal{L}$ is a law on $\R_+$ with finite mean, $\landsc$ is a Poisson building process of intensity $\lambda >0$ and height distribution $\mathcal{L}$.

\subsubsection*{Unimodularity, Covariant Vertex Shifts and Eternal Family Trees}

A first strong structural property of the Eventual Relay Forest in this context comes from its stationary distribution: Indeed, since it is built in a translation invariant way from a stationary marked point process, we have the following (see \cite{EFT} example $2.6$, \cite{Aldous} example $9.5$), which links our model to a well understood class of graphs, and allows us to use the powerful tool of \textit{mass transport principle}:

\begin{proposition}\label{unimodular}\textbf{Unimodularity of the Relay EFT.}

Under the Palm measure of the marked point process $\landsc$, the graph $\graph$ rooted at the "origin building" is unimodular.
\end{proposition}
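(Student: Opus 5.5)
The plan is to reduce unimodularity to the mass transport principle for the Palm version of a stationary marked point process, which is the standard route (cf. \cite{EFT} example $2.6$, \cite{Aldous} example $9.5$). First I record that the graph $\graph$ is a factor of $\landsc$. The blocking map $\tau$, or $\tau_2$ in the max-height case, is defined from the marked points alone, so the whole graph is a deterministic measurable function of the landscape. This map is also translation covariant: writing $\theta_s\landsc$ for the landscape shifted by $s$ along the line, we have $\tau(x+s,y,\theta_s\landsc)=\tau(x,y,\landsc)+(s,0)$. This holds because the quantity maximised, $\frac{b-y}{a-x}$, and the condition $a>x$ depend only on the differences of bases, and the tie-breaking rule of $\tau_2$ (taking the minimal base) is shift invariant as well. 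Hence $\graph(\theta_s\landsc)=\theta_s\graph(\landsc)$. In the language of \cite{EFT}, $\tau$ is a covariant vertex shift on the stationary marked point process $\landsc$. Measurability comes from writing $\tau$ as an argmax over a locally finite set, for which the almost sure existence and uniqueness shown in Section \ref{model} is needed.

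Next I verify the unimodular mass transport principle. Take a measurable $f(G,u,v)\geq 0$ on doubly rooted graphs, invariant under isomorphisms. Since the vertices carry spatial marks, I define from it a transport $g(\landsc,x,y):=f(\graph(\landsc),x,y)$ for $x,y$ in the support of $\landsc$. By the covariance above, $g(\theta_s\landsc,x+s,y+s)=g(\landsc,x,y)$. The point-stationary mass transport principle for Palm probabilities of stationary point processes on $\R$ then gives
\[\e^0\Big[\sum_{y\in\landsc}g(\landsc,0,y)\Big]=\e^0\Big[\sum_{x\in\landsc}g(\landsc,x,0)\Big].\]
This is the standard consequence of stationarity: apply the refined Campbell theorem to $\e\big[\sum_{x\in\landsc\cap[0,1]}\sum_y g(\landsc,x,y)\big]$ and use translation invariance of Lebesgue measure (see \cite{baccelli_random_2024}). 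Once $g$ is replaced by $f$, this identity is exactly the unimodularity of $(\graph,0)$ under $\p^0$, that is, of the graph rooted at the origin building.

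The main obstacle is the subtlety that unimodularity concerns the graph as an unlabelled rooted graph, whereas the mass transport principle above is stated for functions of the embedded, marked configuration. This is easy to handle because any isomorphism-invariant $f$ composed with the covariant factor map yields a covariant $g$, so only this direction is needed. A further point is local finiteness, or at least the level of generality in which unimodularity is meant. In-degrees in $\graph$ are almost surely finite by Proposition \ref{charact}: the in-neighbours of $(\alpha,\beta)$ lie in $[\alpha_{sc},\alpha]$, which is a bounded interval whenever a shadow-cutting building exists, and that happens almost surely because there are infinitely many buildings to the left and they cannot all lie in a cone of positive slope. I would note this and otherwise defer to the general framework of \cite{EFT}, where covariant vertex shifts on point-stationary processes are shown to give unimodular networks.
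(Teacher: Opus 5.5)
Your proposal is correct and matches the paper's argument: the paper notes that $\graph$ is built in a translation-covariant way from a stationary marked point process and cites the standard fact that such covariant graphs are unimodular under the Palm measure (\cite{EFT} Example 2.6, \cite{Aldous} Example 9.5), and you unpack that fact via the refined Campbell theorem and the point-stationary mass transport principle. One small correction to your side remark on local finiteness: under $\tau_2$ in the max-height case, a building of height $S$ has blockage slope $0$, so its reverse shade contains every building to its left and it has no shadow-cutting building; its in-degree is still finite, but for a different reason, namely that its children all lie between it and the previous building of height $S$, which exists a.s.\ because such buildings form a Poisson process of intensity $\lambda\mathcal{L}(\{S\})>0$.
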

In other words, if we add a building at the origin, with height distribution $\mathcal{L}$, then the Relay EFT, rooted in this new building, is unimodular. In the following, we denote by $\graph_0$ the said tree.

This property only relying on the building distribution and the stationarity of our relaying scheme means that it extends to further cases where our study of reverse shade might fail (see Section \ref{finite_range} for instance).\\
Moreover, as we saw in the first part, the  relaying schemes can be seen not only as a way to connect buildings to each other in a kind of static percolation way, but also as a dynamical system : every building has exactly one blocking building. Moreover, this way of doing is certainly covariant under directed graph isomorphism, since it is exactly following the edges of the said graph.\\
As such, the function $\tau$ can be seen as a \textit{covariant vertex shift}, which makes $\graph_0$ fall in the scope of the \textit{unimodular foil classification Theorem} (\cite{EFT}, Theorem $3.10$). 

This theorem states the following:\\
Let a \textit{foil} be a generation in the following sense: two buildings $\mathrm{b}_1$ and $\mathrm{b}_2$ are in the same foil if and only if there is an $n\in \N$ such that $\tau^{(n)}(\mathrm{b}_1) = \tau^{(n)}(\mathrm{b}_2)$. Then, for each connected component of $\graph_0$, it is in either of the three following classes:
\begin{itemize}
    \item Class $\mathcal{F}/\mathcal{F}$: The component is finite, and has finitely many foils, that are all finite. It can be decomposed as a cycle, on which all trajectories end, and finitely many finite branches, that converge to the said cycle.
    \item Class $\mathcal{I}/\mathcal{F}$: The component is infinite, but all its foils are finite. When forgetting about the directions of the edges, it is a two-ended tree, with a unique bi-infinite path, which contains exactly one vertex per foil. It can be decomposed as a bi-infinite spine along which every trajectory ends up traveling, and infinitely many finite "bushes" attached to said spine, that are converging to the spine.
    \item Class $\mathcal{I}/\mathcal{I}$: The component is infinite, and so are all of its foils. When forgetting about directions, it is a one ended tree, with no bi-infinite path. Almost surely, all vertices have finitely many trajectories passing through them, and all trajectories are vanishing towards a "direction" infinitely far away.
\end{itemize}

In our case, differentiating between the classes ends up being quite simple:
\begin{itemize}
    \item If we are not in the max-height case, then as mentioned, every trajectory is infinite, excluding the case of class $\mathcal{F}/\mathcal{F}$. Moreover, our characterisation of eventual relaying through the use of reverse shades, coupled with the fact that the blockage angle can never be $0$, means that, for any building, the number of eventual relayings by this building is a.s. finite, which in turn stops the possibility of bi-infinite path. In other words, the entire Relay EFT is of class $\mathcal{I}/\mathcal{I}$. One can then further interpret Proposition \ref{limite}, where there is a convergence towards an asymptotic, as the embodiment of the vanishing of the trajectories of this class towards the unique end of the tree.
    \item In the max height case, with scheme $\tau_1$, all trajectories are finite as we saw. This in turn means that the Eventual Relay Forest is in class $\mathcal{F}/\mathcal{F}$. The cycle structure is here trivial, as it corresponds to cycles of length $1$ as we discussed in the previous subsection. In this context, the cemetery state almost surely reached corresponds to the said cycles.
    \item In the max height case with scheme $\tau_2$, we already saw that the graph $\graph_0$ was in fact a tree. Moreover, thanks to the reverse shade characterisation, we know that on the event that the origin has maximum height, all the buildings to its left are eventually relayed by it, which in turn implies that the Eventual Relay Tree is in class $\mathcal{I}/\mathcal{F}$. The spine is exactly the set of buildings of maximum height, and the "finite bushes" are the buildings between two consecutive max-height buildings. The cemetery state convergence, coupled with the divergent sequence of blocking buildings' positions corresponds exactly to the behaviour of reaching the spine, then moving along it. 
\end{itemize}

\subsubsection*{Computation of the zone of eventual relaying}

While the results and classification we saw hold in a more general context, and not only for Poisson building processes, this i.i.d. marked Poisson characteristic allows us to derive some exact computations. Similarly to the characterisation of the zone of eventual relaying, the results in this subsection are stated for scheme $\tau_2$, but the methods can be straightforwardly adapted to scheme $\tau_1$

Two objects can be of interest here.

On the one hand, from a structural point of view on the eventual relaying graph $\graph$, one might be interested in the distribution of the buildings that are eventually relayed by a given building $\mathrm{b}_0$.

On the other hand, coming from our initial dimensioning motivation, the question of the number of connections being relayed by the top of a given building $\mathrm{b}_0$ can be investigated.

For the first problem, from the fact that the zone of eventual relaying is a trapezoid, as well as the strong Markov property for Poisson Point Processes, the buildings that are eventually relayed by $\mathrm{b}_0$ form an inhomogenous Poisson Point Process which has the law of the Poisson building process seen as a PPP on $\Half$, restricted to the zone of eventual relaying of $\mathrm{b}_0$ (see Corollary \ref{desc_build}):

As such, the distribution of the descendants of $\mathrm{b}_0$ is completely determined by $\theta$, the blockage angle of $\mathrm{b}_0$, the height of $\mathrm{b}_0$ and the horizontal distance between $\mathrm{b}_0$ and its shadow-cutting building.

For the second problem, if we are to consider a point process of users that is independent of the building process, understanding the distribution of the users that are eventually relayed by $\mathrm{b}_0$ is the same as understanding the vertical projection of $\zone(\mathrm{b}_0)$. From the characterisation of the said zone, this is exactly equivalent to understanding the horizontal distance between $\mathrm{b}_0$ and its shadow-cutting building.

For both problems, a key quantity is the ground length of $\zone(\mathrm{b}_0)$ conditioned on the height and blockage angle of $\mathrm{b}_0$. Since the blocking angle is independent from the buildings to the left of $\mathrm{b}_0$ thanks to the Markov property, the next result is of great interest for the study of the Poisson case:

\begin{theorem}\textbf{Length of the relaying zone.}\label{length}

Let $\landsc '$ be a given landscape, and $(\alpha, \beta)\in \landsc'$ be a building. Denote by $\theta$ the blockage angle of $(\alpha, \beta)$ in $\landsc'$, and let $\tilde{\landsc}$ be a random landscape given by:
\begin{itemize}
    \item On the right of $(\alpha, \beta)$, it is equal to $\landsc'$: $\tilde{\landsc}\cap \{(x, y): x\geq \alpha\} = \landsc' \cap \{(x, y): x\geq \alpha\}$.
    \item On the left of $(\alpha, \beta)$, it is a Poisson building process of intensity $\lambda$ and height distribution $\mathcal{L}$: $\tilde{\landsc}\cap \{(x, y): x> \alpha\} \overset{(d)}{=}\landsc \cap \{(x, y): x> \alpha\}$.
\end{itemize}
Then, in $\tilde{\landsc}$, the ground length of $\zone(\alpha, \beta)$, that is $\ell_\theta(\zone(\alpha, \beta)) := \min(\alpha-\alpha_{sc}, \frac{\beta}{\tan\theta})$, has its CDF equal to:
\[t\mapsto 1- \mathds{1}_{t\leq \frac{\beta}{\tan\theta}}\exp\left(-\frac{\lambda}{\tan\theta}(\primi(\beta)-\primi(\beta-t\tan\theta))\right).\]
\end{theorem}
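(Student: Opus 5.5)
The plan is to compute the tail $\p(\ell_\theta(\zone(\alpha,\beta)) > t)$ directly by identifying it with a ``below'' event and then applying Lemma \ref{dessous_diago}. The slope $\tan\theta$ of $(\alpha,\beta)$ depends only on $\landsc'$ to the right of $\alpha$, so it is deterministic here. The reverse shade $\rs(\alpha,\beta)$ is therefore a fixed region: the set of points on the left of $\alpha$ lying on or below the line $D$ of slope $\tan\theta$ through $(\alpha,\beta)$, i.e.\ below $f(x)=\beta-\tan\theta(\alpha-x)$. This line meets the ground at abscissa $\alpha-\beta/\tan\theta$.

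By the definition of the shadow-cutting building, $\alpha-\alpha_{sc}>t$ holds exactly when every building of $\tilde\landsc$ with base in $[\alpha-t,\alpha[$ lies in $\rs(\alpha,\beta)$, that is, has height at most $f(x)$. This is the event $\belowa{[\alpha-t,\alpha[}{f}$.

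First take $t<\beta/\tan\theta$. Then $\ell_\theta>t$ is equivalent to $\alpha-\alpha_{sc}>t$. Ties with ``$\geq$'' only affect a null set, since the Poisson process has no atom at a fixed location. Lemma \ref{dessous_diago}, applied to the Poisson building process on the left, gives
\[\p(\ell_\theta>t)=\exp\left(-\lambda\int_{\alpha-t}^{\alpha}\bigl(1-F(\beta-\tan\theta(\alpha-x))\bigr)\,\mathrm{d}x\right).\]
The change of variable $u=\beta-\tan\theta(\alpha-x)$ turns the integral into $\frac{1}{\tan\theta}\int_{\beta-t\tan\theta}^{\beta}(1-F(u))\,\mathrm{d}u$. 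Since $\primi$ is a primitive of $1-F$, this equals $\frac{1}{\tan\theta}\bigl(\primi(\beta)-\primi(\beta-t\tan\theta)\bigr)$, which is the announced formula.

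Next take $t\geq\beta/\tan\theta$. Because of the $\min$ in the definition, $\ell_\theta\leq\beta/\tan\theta\leq t$ deterministically, so the CDF equals $1$. This matches the indicator in the statement. For $t$ exactly equal to $\beta/\tan\theta$ the CDF has an atom, and I would note the convention on the endpoint as a boundary detail.

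The main point needing care is the interplay of the definitions. The reverse shade is defined using $\tau(\alpha,\beta)$, which must be unaffected by the left landscape; this holds because $\tau$ only looks to the right. I would also check that the strict and non-strict inequalities in the definition of $\rb$ translate into the ``$H_i\leq f(X_i)$'' form of Lemma \ref{dessous_diago}, up to null sets. In particular, $\mathcal{L}$ may have atoms, which is why $1-F$ (rather than $\tilde F$) appears for scheme $\tau_2$. Finally, past the ground point of $D$, $f$ becomes negative, and there the event is automatically violated by any building. This is why the computation is only used for $t<\beta/\tan\theta$, and the $\min$ in the definition handles the rest.
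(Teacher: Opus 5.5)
Your proof is correct and follows essentially the paper's route. Both arguments identify the event with a below-the-line event for the Poisson buildings on $[\alpha-t,\alpha[$ and apply Lemma \ref{dessous_diago} with the same change of variables. The only difference is that the paper computes $\p(\alpha_{sc}\le s)$ for the leftmost point of the zone and then appeals to semi-continuity of the CDF. You instead compute the tail $\p(\ell_\theta>t)$ directly, which gives the right-continuous CDF without that step.

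Your endpoint remark is well founded. Since $\ell_\theta=\beta/\tan\theta$ with probability $\exp\bigl(-\frac{\lambda}{\tan\theta}(\primi(\beta)-\primi(0))\bigr)>0$, the CDF equals $1$ at $t=\beta/\tan\theta$. The stated formula there only gives the left limit, so the indicator should really be $\mathds{1}_{t<\beta/\tan\theta}$. Your sentence that the result ``matches the indicator'' therefore holds only for $t>\beta/\tan\theta$.
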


\begin{remark}
In the case where $\theta = 0$, this stays true, as long as we take the convention that $-\frac{\lambda}{\tan\theta}(\primi(\beta)-\primi(\beta-t\tan\theta)) = -\lambda t (1-F(\beta))$, and consider the ground length as a $[-\infty, +\infty]-$valued random variable.
\end{remark}

\begin{proof}
Instead of looking at the ground length directly, we are interested in the position of the leftmost point in $\zone(\alpha, \beta)$, $\max(\alpha_{sc},\alpha-\frac{\beta}{\tan\theta})$, which is exactly $\alpha- \min(\alpha-\alpha_{sc}, \frac{\beta}{\tan\theta})$.\\
On the one hand, it is clear that the said maximum is almost surely in $[\alpha-\frac{\beta}{\tan\theta}, \alpha]$. Moreover, for $t\in [\alpha-\frac{\beta}{\tan\theta}, \alpha]$, having $\alpha_{sc} \leq t$ is equivalent to having every building building with $x$-coordinate in $[t, \alpha[$ below the diagonal $\{(x, y)\in \Half, y = (x-(\alpha-\frac{\beta}{\tan \theta})\tan \theta$. Hence, Lemma \ref{dessous_diago} yields that
\[\p(\alpha_{sc} \leq t) = \exp\left(-\frac{\lambda}{\tan \theta}(\primi(\beta)-\primi((t-(\alpha-\frac{\beta}{\tan \theta}))\tan\theta))\right).\]
This means that for $t\in \R_+$, 
\[\p(\min(\alpha-\alpha_{sc}, \frac{\beta}{\tan\theta})<t) = 1- \mathds{1}_{t\leq \frac{\beta}{\tan\theta}}\exp\left(-\frac{\lambda}{\tan\theta}(\primi(\beta)-\primi(\beta-t\tan\theta))\right).\]
The semi continuity of the CDF then allows us to conclude that this is also the expression of the CDF.
\end{proof} 

Moreover, having the distribution of $\max(\alpha_{sc},\alpha-\frac{\beta}{\tan\theta})$ allows us to also get a control on the distribution of the buildings that are eventually relayed by the building $(\alpha, \beta)$ is the context of the proposition.

Keeping the notations from the proposition, we have:

\begin{corollary}\textbf{Buildings being relayed by a given building.}\label{desc_build}
    
Conditional on $\zone(\alpha, \beta)$ the buildings in $\zone(\alpha, \beta)$ form an inhomogeneous PPP of intensity
\[\tilde{\lambda} = \lambda \rm{d}x\otimes \mathcal{L}. \]
The average number of such buildings is equal to:
\[\lambda\e[\ell_\theta(\zone(\alpha, \beta))] - \left(1-\exp(-\frac{\lambda}{\tan\theta}(\primi(\beta)-\primi(0)))\right).\]
\end{corollary}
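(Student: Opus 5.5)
The plan is to exploit the fact that, once the landscape to the right of $(\alpha,\beta)$ is fixed, the shadow-cutting building is found by a leftward scan from $\alpha$. Write $\ell$ for $\ell_\theta(\zone(\alpha,\beta))$ and $\tan\theta$ for the blockage slope of $(\alpha,\beta)$. Let $D(x) := \beta-(\alpha-x)\tan\theta$ be the boundary line of $\rs(\alpha,\beta)$, so that a point $(x,y)$ with $x\le\alpha$ lies in the reverse shade iff $y\le D(x)$.

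In $\tilde{\landsc}$, the abscissa $\alpha_{sc}$ is the first base, scanning leftwards from $\alpha$, of a building lying strictly above $D$. Moreover, by Proposition \ref{charact}, the buildings of $\zone(\alpha,\beta)$ are exactly the buildings with base in $]\max(\alpha_{sc},\alpha-\beta/\tan\theta),\alpha[$. Each such building lies below $D$, since otherwise it would be a closer shadow-cutting building.

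\textbf{Conditional law.} Split the PPP on $]-\infty,\alpha[\times\R_+$ (intensity $\lambda\,\mathrm{d}x\otimes\mathcal{L}$) into two independent Poisson processes. The first consists of the points above $D$; this is the thinning already used in Lemma \ref{dessous_diago}, with intensity $\lambda(1-F(D(x)))\,\mathrm{d}x$. The second consists of the points below $D$, with intensity $\lambda F(D(x))\,\mathrm{d}x\otimes\mathcal{L}|_{[0,D(x)]}$. The abscissa $\alpha_{sc}$, and hence $\zone(\alpha,\beta)$, is a measurable function of the first process alone: it is its rightmost point left of $\alpha$. It is therefore independent of the second process. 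Conditional on $\zone(\alpha,\beta)$, the buildings in the zone are the second process restricted to $]\alpha-\ell,\alpha[$. This is a PPP whose intensity is $\lambda\,\mathrm{d}x\otimes\mathcal{L}$ restricted to the trapezoid $\zone(\alpha,\beta)$, which is the first claim. If one prefers the strong Markov formulation, $[\alpha_{sc},\alpha]$ is a stopping set for the leftward scan, and the same conclusion follows.

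\textbf{Mean number.} Conditioning on $\ell$ and using the intensity just obtained, the expected count is
\[\e\Bigl[\int_{\alpha-\ell}^{\alpha}\lambda F(D(x))\,\mathrm{d}x\Bigr]=\lambda\e[\ell]-\e[\Lambda(\ell)],\qquad \Lambda(s):=\int_{\alpha-s}^{\alpha}\lambda(1-F(D(x)))\,\mathrm{d}x=\frac{\lambda}{\tan\theta}\bigl(\primi(\beta)-\primi(\beta-s\tan\theta)\bigr).\]

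The remaining step, which I expect to be the only delicate point, is to compute $\e[\Lambda(\ell)]$. By Theorem \ref{length}, $\p(\ell\ge s)=e^{-\Lambda(s)}$ for $s\le M:=\beta/\tan\theta$, and $\ell$ has an atom at $M$. The function $\Lambda$ is continuous and nondecreasing. Hence $\Lambda(\ell)\overset{(d)}{=}\min(E,\Lambda(M))$ with $E$ a standard exponential variable, the truncation accounting for the atom at $M$. Concretely, $\ell$ can be realized as $\Lambda^{-1}(E)\wedge M$ with a generalized inverse, and flat parts of $\Lambda$ cause no trouble because $\Lambda$ composed with its generalized inverse is the identity on the range. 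It follows that
\[\e[\Lambda(\ell)]=\int_0^{\Lambda(M)}e^{-u}\,\mathrm{d}u=1-\exp\Bigl(-\frac{\lambda}{\tan\theta}\bigl(\primi(\beta)-\primi(0)\bigr)\Bigr),\]
which gives the stated formula. For $\theta=0$ one would apply the convention of the Remark following Theorem \ref{length}.
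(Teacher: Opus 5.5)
Your proof is correct. The overall structure matches the paper's: first the conditional Poisson structure of the zone, then conditioning on $\ell$ and using Theorem \ref{length}. Both technical steps, however, are carried out differently.

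For the first claim, the paper applies the strong Markov property of the marked PPP to the random stopping set lying above the zone. This forces it to compactify the mark space and to handle the boundary line separately. You instead split the process left of $\alpha$ into its two independent thinnings, above and below the line $D$, and note that $\alpha_{sc}$, hence $\zone(\alpha,\beta)$, is measurable with respect to the upper one. This is more elementary and fully rigorous here, because $D$ is fixed in advance by the landscape to the right of $\alpha$. The stopping-set argument is what one would need if the separating curve were itself built during the exploration.

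A small notational point: the lower process has intensity $\lambda\,\mathds{1}_{y\le D(x)}\,\mathrm{d}x\,\mathcal{L}(\mathrm{d}y)$. Your $\lambda F(D(x))\,\mathrm{d}x\otimes\mathcal{L}|_{[0,D(x)]}$ is right only if $\mathcal{L}|_{[0,D(x)]}$ denotes the normalised conditional law.

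For the mean, the paper uses Fubini to write the expected count as $\int_{\alpha-M}^{\alpha}\lambda F(D(x))\,\p(\ell\ge\alpha-x)\,\mathrm{d}x$. It then splits $F=1-(1-F)$, gets $\lambda\e[\ell]$ from the constant part, and recognises the rest as an exact derivative of $e^{-\Lambda}$. You reduce to $\e[\Lambda(\ell)]$ and use $\Lambda(\ell)\overset{(d)}{=}\min(E,\Lambda(M))$. Both compute $\int_0^{M}\Lambda'(s)e^{-\Lambda(s)}\,\mathrm{d}s$. Your version makes visible that the correction term equals $\p(\ell<M)$, the probability that the zone is ended by an actual shadow-cutting building rather than by the ground.

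Your handling of the atom, $\p(\ell\ge s)=e^{-\Lambda(s)}$ for $s\le M$, is also the correct reading of Theorem \ref{length}. Its displayed CDF with $\mathds{1}_{t\le\beta/\tan\theta}$, taken literally, would not reach $1$ at $t=\beta/\tan\theta$.
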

\begin{proof}
The first part is the strong the Markov property applied to the inhomegeneous Poisson Point Process of the top of the buildings in $\Half$. The latter has intensity $\tilde{\lambda} = \lambda \rm{d}x\otimes \mathcal{L}$, so using the stopping set of the points above $\zone(\alpha, \beta)$: $\{(x, y)\in \Half:x\geq \alpha_{sc} \text{ and }y\geq \beta-(\alpha-y)\tan\theta\}$ (while this is not a compact per se, the horizontal point process is, so up to applying an increasing continuous bijection from $\R+$ to a bounded set, we can view the marks as having values in a compact, enabling us to use the Markov property on this set). This, together with the strong Markov property on the top line $\{(x, y)\in \Half:x\geq \alpha_{sc} \text{ and }y= \beta-(\alpha-y)\tan\theta\}$ yields the first wanted result.

For the second one, it comes down to computing the average number of points of an inhomogeneous PPP: Let $N(\zone(\alpha, \beta))$ be the number of buildings in $\zone(\alpha, \beta)$, and $\nu$ be the distribution of $\max(\alpha_{sc},\alpha-\frac{\beta}{\tan\theta})$. We have:
\begin{align*}
    \e[N(\zone(\alpha, \beta))] &= \e[\e[N(\zone(\alpha, \beta))| \alpha_{sc}]]\\
    &= \int_{t = \alpha-\frac{\beta}{\tan\theta}}^{\alpha}\int_{x=t}^{\alpha} \lambda F(\beta-(\alpha-x)\tan\theta)\rm{d}x\rm{d}\nu(t)\\
    &= \int_{x = \alpha-\frac{\beta}{\tan\theta}}^{\alpha}\int_{t=\alpha-\frac{\beta}{\tan\theta}}^{x} \rm{d}\nu(t)\lambda F(\beta-(\alpha-x)\tan\theta)\rm{d}x\\
    &= \int_{\alpha-\frac{\beta}{\tan\theta}}^{\alpha}\lambda\left(1-(1- F(\beta-(\alpha-x))\tan\theta)\right)\exp\left(-\frac{\lambda}{\tan\theta}(\primi(\beta)-\primi(\beta-(\alpha-x)\tan\theta))\right)\rm{d}x\\
    &= \lambda\e[\ell_\theta(\zone(\alpha, \beta))] - \int_{0}^{\beta}\frac{\lambda}{\tan\theta}\left(1- F(u)\right)\exp\left(-\frac{\lambda}{\tan\theta}(\primi(\beta)-\primi(u)\right)\rm{d}u\\
    &= \lambda\e[\ell_\theta(\zone(\alpha, \beta))] - \left(1-\exp(-\frac{\lambda}{\tan\theta}(\primi(\beta)-\primi(0)))\right),
\end{align*}
which is the announced result.
\end{proof}

Furthermore, in the case of Poisson building processes, the landscape to the right of a given point is independent from the landscape to the left of the said point, thanks to the spatial Markov property. Hence, we can compute the following:

\begin{corollary}\label{typical} \textbf{Users being relayed by a given building.}

Let $(\alpha, \beta)\in \Half$ be a given point such that $\mathcal{L}([\beta,  +\infty[)>0$, and consider the landscape $\tilde{\landsc} := \landsc\cup \{(\alpha, \beta)\}$. 

Then, the expected length of the zone of eventual relaying of $(\alpha, \beta)$ is equal to 
$$\e[\ell(\zone(\alpha, \beta))] =\left\{
    \begin{array}{ll}
        \displaystyle\int_{0}^{\beta} -\frac{1}{\lambda}\frac{\primi(\beta)}{\primi(u)^2}du & \mbox{if } \beta<S \\
        +\infty & \mbox{if }\beta=S,
    \end{array}
\right.$$
where $S$ denotes the supremum of the support of $\mathcal{L}$. 
In particular, under the Palm measure, the average length of the zone of eventual relaying for a typical building is infinite.
\end{corollary}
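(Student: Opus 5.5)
The plan is to condition on the blockage slope of $(\alpha,\beta)$, average the conditional result of Theorem \ref{length} over the law of that slope, and treat the case $\beta=S$ and the Palm statement separately.

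\textbf{Step 1: law of the slope.} By the spatial Markov property, in $\tilde{\landsc}$ the blockage slope $T=\tan\theta$ of $(\alpha,\beta)$ depends only on the buildings to the right of $\alpha$. The landscape to the left of $\alpha$ is an independent Poisson building process, so we are exactly in the setting of Theorem \ref{length} conditionally on $T$. For $\beta<S$, the event $\{T\le s\}$ is $\below{[\alpha,+\infty[}{(\alpha,\beta)}{s}$, and Lemma \ref{dessous_diago} gives
\[\p(T\le s)=\exp\Bigl(-\lambda\int_0^\infty (1-F(\beta+sx))\,\mathrm{d}x\Bigr)=\exp\Bigl(\frac{\lambda\primi(\beta)}{s}\Bigr).\]
Here $\primi(\beta)<0$, since $\beta<S$. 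Hence $T$ has density $-\frac{\lambda\primi(\beta)}{s^2}\exp\bigl(\frac{\lambda\primi(\beta)}{s}\bigr)$ on $\R_+^*$.

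\textbf{Step 2: conditional mean length.} By Theorem \ref{length}, with the substitution $u=\beta-rs$,
\[\e[\ell\mid T=s]=\int_0^{\beta/s}\exp\Bigl(-\frac{\lambda}{s}\bigl(\primi(\beta)-\primi(\beta-rs)\bigr)\Bigr)\mathrm{d}r=\frac1s\int_0^\beta \exp\Bigl(-\frac{\lambda}{s}\bigl(\primi(\beta)-\primi(u)\bigr)\Bigr)\mathrm{d}u.\]

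\textbf{Step 3: integrate over $T$.} Integrate this against the density of Step 1 and apply Tonelli to swap the $u$ and $s$ integrals. The factors $\exp(\pm\lambda\primi(\beta)/s)$ cancel, leaving
\[\int_0^\beta\int_0^\infty -\frac{\lambda\primi(\beta)}{s^3}\exp\Bigl(\frac{\lambda\primi(u)}{s}\Bigr)\mathrm{d}s\,\mathrm{d}u.\]
The substitution $v=1/s$ turns the inner integral into $-\lambda\primi(\beta)\int_0^\infty v\,e^{\lambda\primi(u)v}\,\mathrm{d}v=-\frac{\primi(\beta)}{\lambda\primi(u)^2}$, which is the announced formula.

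\textbf{Step 4: the case $\beta=S$.} The hypothesis $\mathcal{L}([\beta,+\infty[)>0$ forces an atom at $S$. Under scheme $\tau_2$, the blocking building of $(\alpha,S)$ is the next max-height building, so the slope is $0$. The defining inequality of the shadow-cutting building would then require $y>S$, which never holds, so there is no shadow-cutting building and $\alpha_{sc}=-\infty$. The zone is the whole strip to the left of $\alpha$ under height $S$, and $\ell=+\infty$, consistent with the convention in the remark after Theorem \ref{length}.

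\textbf{Step 5: the Palm statement, which is the main obstacle.} Integrating the formula over $\beta\sim\mathcal{L}$ directly is awkward. A naive lower bound such as $\int_0^\beta \primi(u)^{-2}\mathrm{d}u\ge \frac{1}{|\primi(\beta)|}-\frac{1}{|\primi(0)|}$ only gives a finite bound. If $\mathcal{L}$ has an atom at $S$, Step 4 already yields an infinite mean.

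Otherwise I would use the structure of the tree. In the non-max-height case, the Relay EFT rooted at the origin building is unimodular (Proposition \ref{unimodular}) and of class $\mathcal{I}/\mathcal{I}$. For such trees, the mass transport principle applied to the transport "each vertex sends mass $1$ to each of its ancestors" shows that the expected number of descendants of the root equals the expected number of its ancestors, which is infinite because every trajectory is infinite.

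Corollary \ref{desc_build} then bounds the expected number of buildings eventually relayed by the root by $\lambda\,\e[\ell_\theta(\zone)]$. The subtracted term there is nonnegative, and averaging over $\theta$ and $\beta$ is harmless. Hence $\e[\ell]=+\infty$ under the Palm measure.
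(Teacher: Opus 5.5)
Your proof is correct and follows the paper's route: you condition on the blockage slope, integrate the tail from Theorem \ref{length} with the substitution $u=\beta-lt$ and Fubini, read off the case $\beta=S$ from the zero-angle situation, and obtain the Palm statement from the mass transport principle combined with the bound of Corollary \ref{desc_build}. The only difference is cosmetic. You use the marginal law $\p(T\le s)=\exp(\lambda\primi(\beta)/s)$ of the slope directly. The paper instead integrates against the joint density of the blocking building's slope and height, and only at the end uses $\int_{h>\beta}(h-\beta)\,\mathrm{d}\mathcal{L}(h)=-\primi(\beta)$.
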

\begin{remark}
This result is for scheme $\tau_2$ and, of course, does not hold for scheme $\tau_1$ in the max height case. However, mending our methods in the ways we gave in Remark \ref{changetau1}, we get similar results, but this time with finite expected length under the Palm measure:
$$\e[\ell(\zone(\alpha, \beta))] =\left\{
    \begin{array}{ll}
        \displaystyle\int_{0}^{\beta} -\frac{1}{\lambda}\frac{\primi(\beta)}{\primi(u)^2}\rm{d}u & \mbox{if } \beta<S \\
       \frac{1}{\lambda \mathcal{L}(S)} & \mbox{if }\beta=S.
    \end{array}
\right.$$
\end{remark}

\begin{proof}
Since $\beta=S$ implies that the blockage angle is $0$, this case is directly implied by Proposition \ref{length}. Let us treat the case of $\beta<S$.\\
Using the spatial Markov property for the Poisson building process, the buildings to the right of $A$ and those to the left of it form independent Poisson building processes of intensity $\lambda$ and height distribution $\mathcal{L}$. Then, using the fact that $\e[\ell(\zone(\alpha, \beta))] =\e[e[\ell_\theta(\zone(\alpha, \beta))|\theta]]$, we get, using Proposition \ref{length}:
\begin{align*}
    \e \left[ \ell(\zone(\alpha, \beta))\right] &= \int_{t\geq 0}\int_{h> \beta}\int_{l\geq 0}\lambda\frac{h-\beta}{t^2}\exp \left(\frac{\lambda}{t}\primi(\beta)\right)\mathds{1}_{l\leq \frac{\beta}{t}}\exp \left(-\frac{\lambda}{t}(\primi(\beta)-\primi(\beta-lt))\right) dl \rm{d}\mathcal{L}(h) dt.
\end{align*}
Making the change of variables $u = \beta-lt$, this can be rewritten as:
\[\int_{t\geq 0} \int_{h> \beta}\int_{u=0}^{\beta} \frac{-\lambda (h-\beta)}{t^3}\exp\left(\frac{\lambda}{t}\primi(u)\right)  du \rm{d}\mathcal{L}(h) dt.\]
Using Fubini's theorem, this yields:
\begin{align*}
    \e \left[  \ell(\zone(\alpha, \beta))\right]&= \int_{u=0}^{\beta} \int_{h>\beta}\frac{h-\beta}{\lambda}\frac{1}{\primi(u)^2}\rm{d}\mathcal{L}(h)\rm{d}u\\
    &= \int_{u=0}^{\beta} \int_{h>\beta}\int_{w=\beta}^{h}\frac{1}{\lambda}\frac{1}{\primi(u)^2}dw \rm{d}\mathcal{L}(h)\rm{d}u\\
    &= \int_{0}^{\beta} -\frac{1}{\lambda}\frac{\primi(H)}{\primi(u)^2},
\end{align*}
which is the wanted result. For the infinite average under the Palm measure, one could use the above last result to compute it, integrating it against the distribution $\mathcal{L}$. However, it might be more direct to recall the unimodular characteristic of the Eventual Relay Tree when looking at a typical building under the Palm measure, allowing us to apply the mass transport principle to the function
\[f:(\mathcal{G}, o, v)\mapsto \mathds{1}_{o \text{ is eventually relayed by } v}.\]
This indicates that, under the Palm measure, the average number of buildings by which the origin is eventually relayed, which is infinite in scheme $\tau_2$, is equal to the average number of buildings that are eventually relayed by it. However, from Corollary \ref{typical}, this number is lower than $\lambda$ times the average length of the typical user's zone of eventual relaying, which completes our proof.
\end{proof}
\section{Finite range case: extensions and limits}\label{finite_range}

So far, a main component of our study has been the shapes of the shades and its avatars, that is the increasing characteristic of the blocking building's height, the decreasing characteristic of the blockage angle, the Markovian property, the maximality of the blocking building's shade or the minimality of the reverse shade. However, this property relies on the fact that the blocking angle is a global maximum, or in other words that a user is allowed to be relayed arbitrarily far: our schemes have infinite range. \\
In this section, we investigate a possible extension of our results to a case where the range of the signals is limited. We first study a specific type of range in which explicit formulas for generalisation are easier to state, and then give the methods and qualitative results for general definitions of range.

\subsection{Finite Horizontal Range}

Here, we consider a way of characterising the range that is simpler than the Euclidean distance. In the following, we consider the horizontal "distance" between two points of $\Half$ to be the distance between their projections on the $x-$axis. Looking at this kind of distance can be seen as an approximation in the case where the height variation between buildings is negligible in front of the inter-building distance. In the following, we let $R>0$ be the \textit{range} of the relay.

\subsubsection*{Change of the geometry}

Let us first reformulate the problem to fit this kind of finite range. Unless mentioned otherwise, we keep the formalism defined in the first section. 

Let us define the relaying scheme:

\begin{definition}\textbf{Blockage in the finite range case.}

For $\landsc$ a landscape, and $(x, y)\in \Half$ a point, the blocking building of $(x, y)$ in $\landsc$ is, when it is well defined, 
\[\tau (x, y, \landsc) = (x_b, y_b) := \underset{\substack{(a, b) \in \landsc \\a\in ]x, x+R]}}{\text{argmax}} \frac{b-y}{a-x}.\]
And $\tau(x, y, \landsc) = (-\infty, -\infty)$ when there is no building in $]x, x+R]$.

We define the blockage angle $\theta$ and its tangent $\mathrm{t}$ similarly to the infinite range case (with the convention that $\mathrm{t} = -\infty$).
\end{definition}

\begin{remark}
This scheme is not the only possible one. This corresponds to being relayed by the \textit{furthest} visible building, but one could for instance be relayed by the highest visible one.
\end{remark}

Under this scheme, the shades, while defined in a different manner, are still objects of interest:

\begin{definition}\label{horiz_fin_shade}\textbf{Finite Range Shades.}

Let $\landsc$ be a landscape, $(x_1, y_1)$ a point of $\Half$, and $(a, b)\in \landsc$ a building. We call the shade of $(a, b)$ from $(x_1, y_1)$ the set of points right to $a$ that form a lower slope with $(x_1, y_1)$ than $(a, b)$ does, or that are too far away from $(x_1, y_1)$:
\[S^{ (x_1, y_1)}_{(a, b)} = \left\{ (x, y)\in \Half:x_1 \in [a, x_1+R],\  \frac{y-y_1}{x-x_1}\leq \frac{b-y_1}{a-x_1}\right\}\cup \{(x, y)\in \Half:x > x_1+R\}.\]

\begin{figure}[h]\label{horiz_fin_shade_fig}
            \centering
            \includegraphics[width=0.75\textwidth]{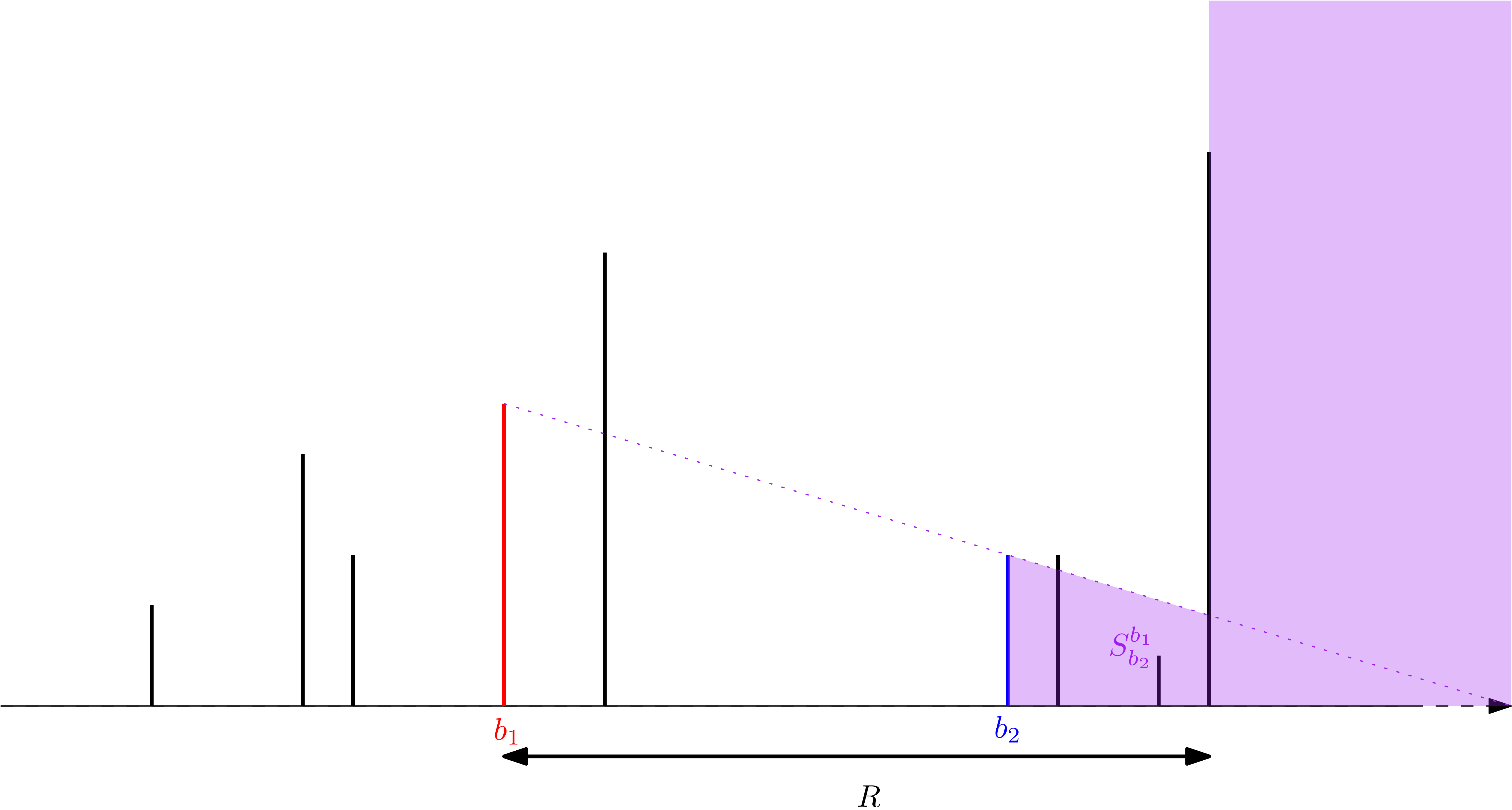}
            \caption{One of the main differences with the infinite range setting, that will be the source of the more complex results, can be seen as the fact that the shades in this context are non convex}
\end{figure}
\end{definition}

\begin{remark}
One can notice that for this specific definition of range, we can retrieve Corollary \ref{increase}: the shades of the blocking buildings are still decreasing for the inclusion.
However, this time they are not only characterised by the height of the building and the angle to it, so the Markovian characteristic is dependent on a third parameter, being the distance between buildings. This property is still true for the scheme which chooses the highest visible building.
\end{remark}

The definition that changes the most in this setting is that of the reverse shade. Indeed, the main characteristic $\rs (a, b)$ is that it is the complement of the set of points that would skip $(a, b)$ in the relaying scheme. Taking into account the range, which translates into a non convexity of the shades, we get the following:

\begin{definition}\textbf{Reverse Shade the in finite range case.}

Let $\landsc$ be a landscape and $(a, b)\in \landsc$ a building. We define the reverse shade of $(a, b)$ as \textit{the intersection of all the shades of $(a, b)$ from buildings visible from $(a, b)$, cut off at distance $R$ from $a$}:
\begin{align*}
\rs (a, b) := \displaystyle\bigcap_{\substack{(a_1, b_1) \in \landsc \\ a_1\in ]a, a+R]}} \Biggl(\{(x, y) \in \Half: (x\geq a_1-R &\text{ and } \frac{b-y}{a-x} \geq \frac{b_1-b}{a_1-a}\}\\&  \cup\{(x, y)\in \Half:a-Rx <a_1-R\}\Biggl).
\end{align*}

\begin{figure}[h]\label{Reverse_shade_fini_horiz}
            \centering
            \includegraphics[width=0.75\textwidth]{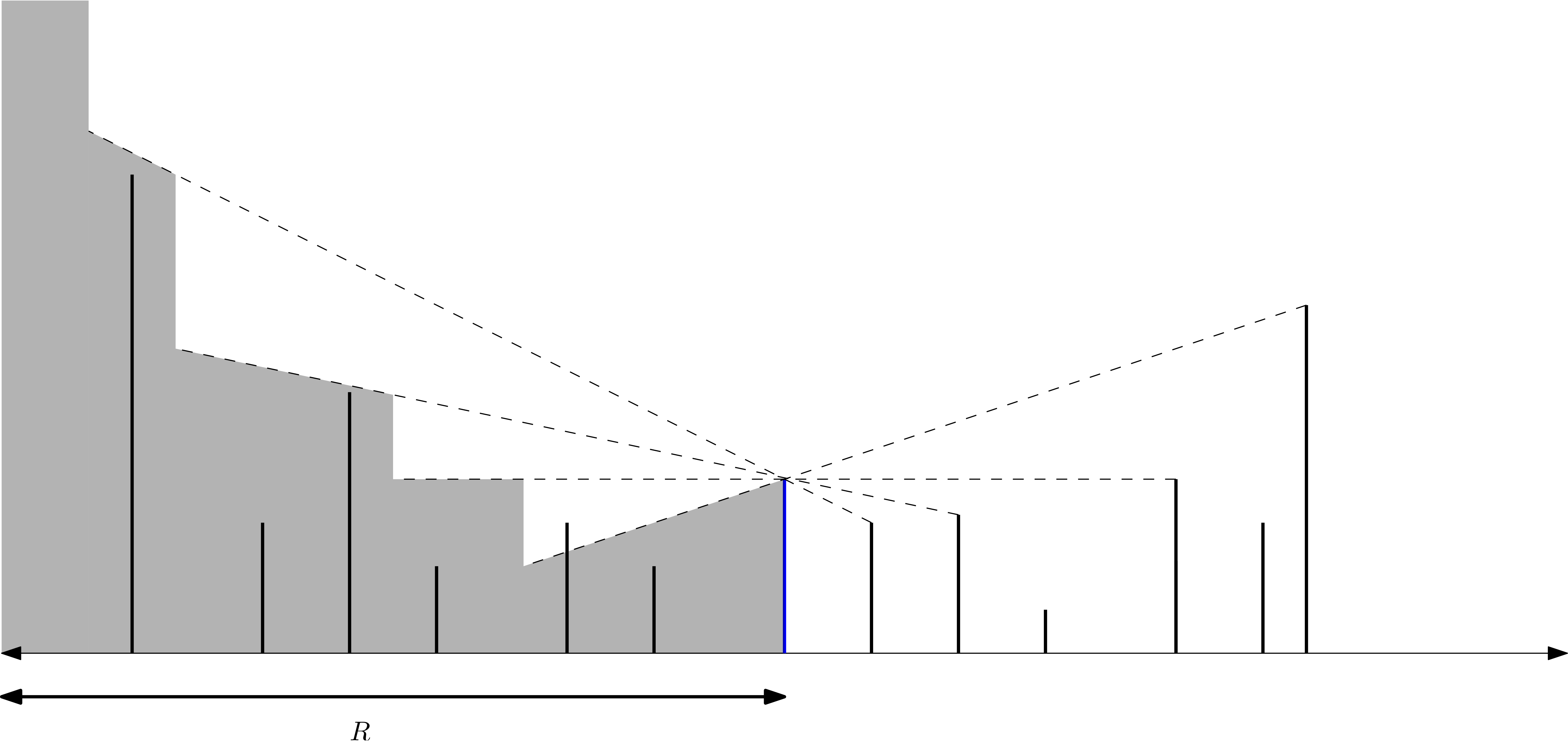}
            \caption{The reverse shade is characterized by a succession of upwards jumps and then increasing slopes.}
\end{figure}
\end{definition}

\begin{remark}\label{RS_fin}
This difference of the reverse shades also changes the characterisation of eventual relays: our result mainly hinged on the fact any building on the left of the blocking building is eventually relayed by a building outside of the reverse shade (or in other words on a kind of inclusion of reverse shades). However, here, the characterisation gets more complicated. Indeed, it can be formulated in terms of a sort of inclusion-exclusion form: it is the reverse shade, to which we successively either add or retract the reverse shades of the buildings to the left of it, depending on whether they stand or not in the current set. This characterisation holds, but is heavy computationally, and does not have simplifying properties as in the infinite-range case (for instance, it is in general not connected).

\begin{figure}[h]\label{connection_area_fini_horiz}
            \centering
            \includegraphics[width=0.3\textwidth]{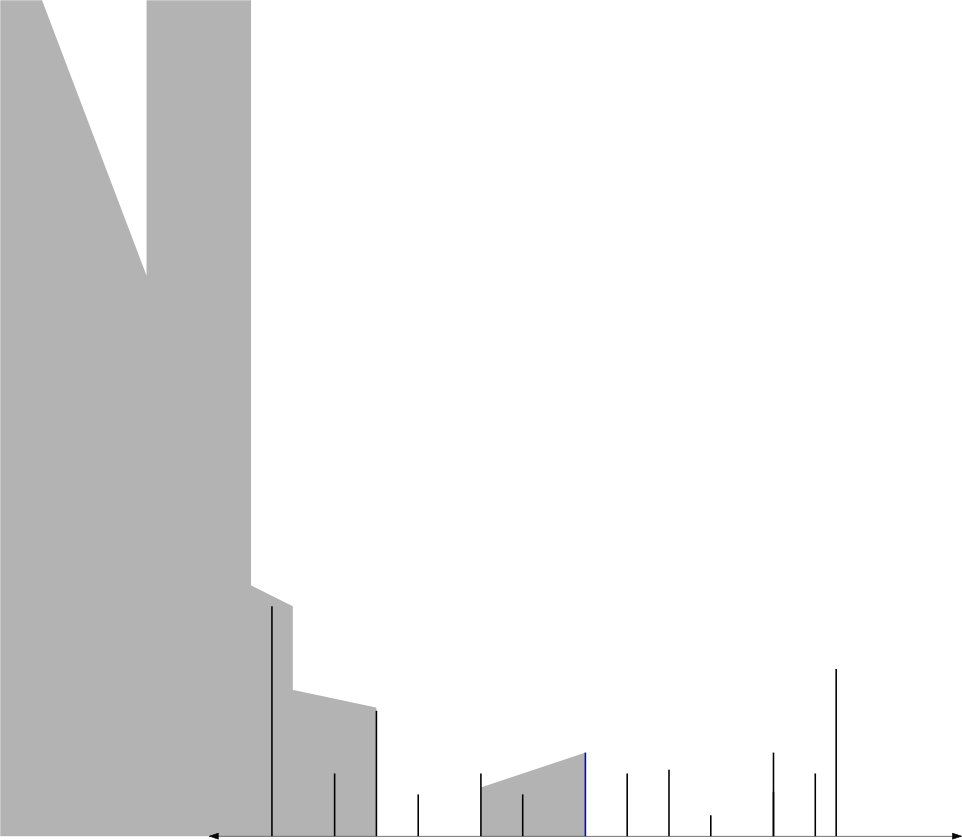}
            \caption{In the previous figure, considering only the present buildings, the relaying zone would look as above.}
\end{figure}
\end{remark}

\subsubsection*{Results in a random landscape}

In the following, we consider $\landsc$ a Poisson building process with intensity $\lambda>0$ and height distribution $\mathcal{L}$. For the sake of clarity, we assume that $\mathcal{L}$ does not have mass on the supremum of its support. The possible generalisations to this case are a straightforward adaptation of subsection \ref{max_height}.

As far as the evolution of the blockage is concerned, out methods from section \ref{multihop} still work the exact same way. This allows us to get the following:
\newpage
\begin{proposition}\label{evo_finite}\textbf{Evolution of blockage: horizontal finite range.}

Recall the notations of Theorem \ref{evo}. Let $N$ be a fixed integer. The restriction of the law of $(X_n, H_n)_{0\leq n\leq N}$ to $\Half^{n+1}$, or in other words, the restriction of the law of the evolution of blockage to the event of always finding a blocking building in range has density with respect to the product measure $\mu\displaystyle\bigotimes_{i=1}^{N} (\Leb \otimes \mathcal{L})$, which is given by
\begin{align*}
g_{N, R}: (x_n, h_n)_{0\leq n\leq N} \mapsto \lambda^N\exp\Biggl(-\lambda\Biggl(&\frac{\primi(h_{n-1}+Rt_n)}{t_n}-\frac{\primi(h_0)}{t_1}+\sum_{i=1}^{n-1}\primi(h_i)(\frac{1}{t_{i}}-\frac{1}{t_{i+1}})\\
&+ \sum_{i=1}^{n-1} (\frac{\primi(h_i + r_i (t_i \wedge t_{i+1}))}{t_i\wedge t_{i+1}}-\frac{\primi(h_i +r_i t_{i+1})}{t_{i+1}})\Biggl)\Biggl)\\
\prod_{1\leq i\leq n}&\mathds{1}_{x_i\in ]x_{i-1}, x_{i-1}+R]}\mathds{1}_{t_i\leq t_{i-1} \text{ or } x_i > x_{i-2}+R },
\end{align*}
where:
\begin{itemize}
    \item The $t_i$ are defined to be the slopes from $(x_{i-1}, h_{i-1})$ to $(x_i, h_i)$ (with the convention $t_0 = +\infty$).
    \item The $r_i$ are defined as the \textit{leftover signal range}: $r_i := R-(x_i-x_{i+1})$.
\end{itemize}
\end{proposition}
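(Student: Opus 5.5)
The plan is to follow the proof of Theorem \ref{evo} step by step: compute the probability of the box event $\event$ by matching lower and upper bounds, then pass to densities by the Riemann-sum and monotone-convergence argument. The only new work is geometric. The event that $(X_i,H_i)_{0\le i\le N}$ lies in given boxes, intersected with always finding a building in range, must be rewritten as a conjunction of independent below-events on disjoint horizontal windows. The probability of each of these is then given by Lemma \ref{dessous_diago}.

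The first step is to characterise a realised chain $(x_i,h_i)_{0\le i\le N}$ deterministically. We need $x_i\in\,]x_{i-1},x_{i-1}+R]$. We also need every building with base in $]x_{i-1},x_{i-1}+R]$ to lie below the line of slope $t_i$ through $(x_{i-1},h_{i-1})$. Since the $x_i$ increase, the window of step $i$ overlaps the previous one on $]x_{i},x_{i-1}+R]$. On $]x_{i-1},x_i[$ the constraint from step $i$ coexists with constraints from earlier steps. Proposition \ref{geomsimple} and the remark that the finite-range shades are still non-increasing show that, where both apply, the binding constraint is the one coming from step $i$. The exception is the overlap region $]x_i,x_{i-1}+R]$, where the buildings must lie below \emph{both} lines, namely the line of slope $t_i$ through $(x_{i-1},h_{i-1})$ and the line of slope $t_{i+1}$ through $(x_i,h_i)$. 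Because both lines pass through $(x_i,h_i)$, the lower one on $[x_i,\infty)$ is the one with slope $t_i\wedge t_{i+1}$. This is where the correction terms with $r_i$ come from, with $r_i$ the length of the overlap. Finally, consistency forces the indicator $t_i\le t_{i-1}$ unless $x_i>x_{i-2}+R$: if $(x_i,h_i)$ lies in the window of $x_{i-2}$, it must be under the line of slope $t_{i-1}$, which by Proposition \ref{geomsimple} amounts to $t_i\le t_{i-1}$.

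The second step is to apply Lemma \ref{dessous_diago} to each window.
\begin{itemize}
\item On $]x_{i-1},x_i[$ below slope $t_i$, the lemma gives $\exp(-\frac{\lambda}{t_i}(\primi(h_i)-\primi(h_{i-1})))$.
\item On the overlap $]x_i,x_{i-1}+R]$, we get $\exp(-\lambda\,\frac{\primi(h_i+r_i(t_i\wedge t_{i+1}))-\primi(h_i)}{t_i\wedge t_{i+1}})$.
\item On the non-overlapping part $]x_{i-1}+R, x_i+R]$ of the next window, below slope $t_{i+1}$, we get $\exp(-\frac{\lambda}{t_{i+1}}(\primi(h_{i+1})\text{-type terms}))$, which must be reorganised.
\end{itemize}
Gathering the exponents telescopically should reproduce the $\primi(h_i)(\frac1{t_i}-\frac1{t_{i+1}})$ sum of Theorem \ref{evo}. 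It should also produce the extra sum $\frac{\primi(h_i+r_i(t_i\wedge t_{i+1}))}{t_i\wedge t_{i+1}}-\frac{\primi(h_i+r_it_{i+1})}{t_{i+1}}$, which is the overlap computed with the true constraint minus the overlap counted as if only slope $t_{i+1}$ applied. The boundary term $\frac{\primi(h_{N-1}+Rt_N)}{t_N}$ comes from the last window, which is cut at $x_{N-1}+R$ instead of extending to infinity. Negative slopes need care: $\primi$ is then evaluated at arguments possibly below $h_i$, but Lemma \ref{dessous_diago} holds for any linear $f$, and we use $\mathcal{L}(]f,\infty[)=1$ for $f<0$. I would check that the formula handles this, and truncate the line at $0$ if needed.

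The third step is to run the lower-bound argument of Theorem \ref{evo} with $t^{\min}_i$-type perturbations in the boxes, sum over fine partitions, and let the mesh go to zero. I expect the main obstacle here: unlike Theorem \ref{evo}, the target $g_{N,R}$ is not a probability density, because it only describes the restriction to the event that a building is always found in range. So the trick of "a lower bound by a probability measure gives equality" fails. I would instead prove a matching upper bound directly, using $t^{\max}_i$ and the enlarged windows $[a_i,a_i+\epsilon_i]$ with at least one building in them. The extra probability of two or more buildings in a box is $O(\epsilon_i^2)$ and vanishes in the limit, and continuity of the exponent in all arguments, which holds since $\primi$ is $C^0$ and piecewise $C^1$, then gives equality. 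A further subtlety is the indicator $\mathds{1}_{t_i\le t_{i-1}\text{ or }x_i>x_{i-2}+R}$, which is discontinuous. Its boundary has measure zero for the product measure, so I would handle it with monotone convergence exactly as the paper handles the indicators in Theorem \ref{evo}.
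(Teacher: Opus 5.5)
Your strategy is the one the paper sketches: redo the box estimates of Theorem~\ref{evo} with Lemma~\ref{dessous_diago} on disjoint windows. Since $g_{N,R}$ does not have total mass one, the argument is closed with a matching bound built from the extreme angles. The gap is the step you leave at ``should'': carried out, the telescoping does not give the printed exponent.

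Write $W_i=]x_{i-1},x_{i-1}+R]$ and $m_i=t_i\wedge t_{i+1}$. Let $\rho_i=x_{i-1}+R-x_i$ be the length of $W_i\cap W_{i+1}=]x_i,x_{i-1}+R]$. This is what $r_i$ has to mean, since the printed $R-(x_i-x_{i+1})$ exceeds $R$. Your own description of the correction (true overlap minus overlap counted with slope $t_{i+1}$) evaluates to
\[
\frac{\primi(h_i+\rho_i m_i)-\primi(h_i)}{m_i}-\frac{\primi(h_i+\rho_i t_{i+1})-\primi(h_i)}{t_{i+1}}.
\]
This differs from the printed term by $\primi(h_i)\bigl(\frac{1}{t_{i+1}}-\frac{1}{m_i}\bigr)$. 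That difference is nonzero exactly when $t_{i+1}>t_i$, which is the only case where a correction is needed. The printed version fails two checks in that case:
\begin{enumerate}
\item as $\rho_i\to0$ the correction must vanish, and the printed one does not;
\item the true exponent involves only increments of $\primi$, so it is invariant under $\primi\mapsto\primi+c$, and the printed one is not.
\end{enumerate}
Concretely, take $N=2$ and $t_2>t_1$. Integrate over $]x_0,x_1[$ and $]x_1,x_0+R]$ (slope $t_1$), then over $]x_0+R,x_2[$ and $]x_2,x_1+R]$ (slope $t_2$). The exponent is
\[
\lambda\Bigl(-\frac{\primi(h_0)}{t_1}+\frac{\primi(h_1+\rho_1t_1)}{t_1}-\frac{\primi(h_1+\rho_1t_2)}{t_2}+\frac{\primi(h_1+Rt_2)}{t_2}\Bigr),
\]
whereas the printed formula adds $\lambda\primi(h_1)\bigl(\frac{1}{t_1}-\frac{1}{t_2}\bigr)$ to it. What your method actually proves is the printed density with $r_i=\rho_i$ and with the first sum replaced by $\sum_{i=1}^{N-1}\primi(h_i)\bigl(\frac{1}{t_i}-\frac{1}{t_i\wedge t_{i+1}}\bigr)$. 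You need to do this bookkeeping and state the corrected formula, not assert agreement.

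Your first step also claims that on $]x_{i-1},x_i[$ the step-$i$ line is always binding. This is false when $t_i>t_{i-1}$: on $]x_{i-1},x_{i-2}+R]$ the step-$(i-1)$ line is the lower one, and that interval is your own overlap region at index $i-1$, so the two sentences contradict each other. The correct statement has two parts. For $z\in]x_{j-1},x_j[$, only the lines of step $j-1$ (when $z\le x_{j-2}+R$) and step $j$ matter. Beyond $x_N$, only the line of step $N$ matters. The reason is that if an older window $W_i$ still contains some $z>x_{i+1}$, it contains $x_{i+1}$, and this forces $t_{i+1}\le t_i$ by Proposition~\ref{geomsimple}. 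That fact, not the monotonicity of the shades, is what makes older constraints redundant.

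Finally, your claim that the boundary of $\{t_i\le t_{i-1}\}$ is null needs $\mathcal{L}$ to be atomless, and so does the well-posedness of $\tau$ in finite range. With an atom at some $c$, configurations $h_{i-2}=h_{i-1}=h_i=c$ have positive product measure, and slope-$0$ ties occur with positive probability.
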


\begin{remark}
The interpretation of this density can be done in the same way as for the infinite range case. The main difference comes from the term
\[\exp\left(-\lambda\sum_{i=1}^{n-1} (\frac{\primi(h_i + r_i (t_i \wedge t_{i+1})}{t_i\wedge t_{i+1}}-\frac{\primi(h_i +r_i t_{i+1})}{t_{i+1}}\right)\]
which accounts for the fact that the variables $t_i$ are not always decreasing: sometimes some information from a blockage carries on through the later ones. However, for the same reasons as the increase of information, this can only happen if the distance between $X_i$ and $X_i+2$ is greater than $R$.

The addition of the term
\[\exp\left(-\lambda\frac{\primi(h_{n-1}+Rt_n)}{t_n}\right)\]
comes from the fact that the last blockage angle is only true up to distance $R$ to the second to last blocking building.

Moreover, the indicator $\mathds{1}_{t_i\geq t_{i-1} \text{ or } x_i > x_{i-2}+R }$ is just the indicator that $(x_i, h_i)$ is in the shade of $(x_{i-1}, h_{i-1})$ from $(x_{i-2}, h_{i-2})$.

\end{remark}

Again, one can find a Markov behaviour from this formulation:
\begin{corollary}\label{Markov_f}\textbf{Markov behaviour in horizontal finite range.}

For $n\in\N$, let $\tilde{x}_n:= X_n-X_{n-1}$ denote the length of the $n-$th hop.

The sequence $(\tilde{x}_n, \mathrm{t}_n, H_n)$ is a Markov chain with kernel $\mathrm{VIS}_R$ given by:
    \begin{align*}
        &\forall (\tilde{X}, T, H)\in \Half, \forall A \in \mathcal{B}(\R\times\Half), \mathrm{VIS}_R((\tilde{X}, T, H), A) = \int_A vis^{\tilde{X}, T, H}(t, h) \delta_{\frac{h-H}{t} }(\tilde{x})\rm{d}\Leb(t) \rm{d}\mathcal{L}(h)
        \end{align*}
    \begin{align*}
        \forall \tilde{X}, T, H, t, h \in \R_+, vis^{\tilde{X}, T, H} (t, h) :=\frac{h-H}{t^2}\lambda \exp\Biggl(-\lambda \Biggl(&\frac{\primi(H)-\primi(H+(R-\tilde{X})T)}{T}-\frac{\primi(H+Rt)-\primi(H)}{t}\\
        &+\frac{\primi(H + (R-\tilde{X} (t \wedge T)}{t\wedge T}-\frac{\primi(H +(R-\tilde{X} t)}{t}\Biggl)\Biggl)\\
        &\mathds{1}_{\frac{h-H}{t}\in ]0, R], T\geq t \text{ or } \tilde{X}+ \frac{h-H}{t}> R } ,
    \end{align*}
    and 
    \begin{align*}
        &\mathrm{VIS}_R((\tilde{X}, T, H), (-\infty, -\infty, -\infty)) = \exp\Biggl(-\lambda \Biggl(R - \frac{\primi(H)-\primi(H+(R-\tilde{X})T)}{T}\Biggl)\Biggl)\\
        &\mathrm{VIS}_R((-\infty, -\infty, -\infty), (-\infty, -\infty, -\infty)) = 1.
    \end{align*}
\end{corollary}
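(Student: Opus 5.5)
The plan is to derive Corollary \ref{Markov_f} from Proposition \ref{evo_finite}, in the same way Corollary \ref{Markov} follows from Theorem \ref{evo}.

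First, change variables. Replace the positions $(x_n)$ by the hop lengths $\tilde{x}_n = x_n - x_{n-1}$, and replace the heights by the slopes $t_n = (h_n-h_{n-1})/\tilde{x}_n$. Conditionally on $h_{n-1}$, the map $\tilde{x}_n \mapsto t_n$ has Jacobian $|\partial \tilde{x}_n/\partial t_n| = (h_n-h_{n-1})/t_n^2$. This is exactly the factor $\frac{h-H}{t^2}$ that appears in $vis$. Since $\tilde{x}_n$ is then a deterministic function of $(t_n,h_n,h_{n-1})$, its law is the Dirac mass $\delta_{\frac{h-H}{t}}$ written in the kernel.

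Second, regroup the density $g_{N,R}$ of Proposition \ref{evo_finite} as a telescoping product. The goal is to write it as $\prod_{n} k(\tilde{x}_{n-1},t_{n-1},h_{n-1};\,t_n,h_n)$, where each factor depends only on the $(n-1)$-th and $n$-th jumps.
\begin{itemize}
\item The sum $\sum \primi(h_i)(1/t_i-1/t_{i+1})$ and the cross terms $\primi(h_i+r_i(t_i\wedge t_{i+1}))/(t_i\wedge t_{i+1})-\primi(h_i+r_it_{i+1})/t_{i+1}$ already split term by term. Each involves only $(h_i,t_i,t_{i+1},\tilde{x}_{i+1})$, once one checks that $r_i$ is the leftover range $R-\tilde{x}_{i}$, up to the indexing.
\item The boundary term $\primi(h_{n-1}+Rt_n)/t_n$ does not telescope. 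It is the ``last step'' correction. In the ratio $g_{n,R}/g_{n-1,R}$, the correction at step $n-1$ is removed and the correction at step $n$ is added. Hence the $n$-th transition density also contains $+\primi(H+(R-\tilde{X})T)/T$ and $-\primi(H+Rt)/t$, which gives the first line of the exponent in $vis$.
\item The indicators $\mathds{1}_{x_i\in]x_{i-1},x_{i-1}+R]}$ and $\mathds{1}_{t_i\le t_{i-1}\text{ or }x_i>x_{i-2}+R}$ also factor. The condition $x_i>x_{i-2}+R$ reads $\tilde{x}_{i-1}+\tilde{x}_i>R$.
\end{itemize}
With this regrouping, the density is the product of an initial law and the kernels $\mathrm{VIS}_R$. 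This gives the Markov property of $(\tilde{x}_n,\mathrm{t}_n,H_n)$ on the event that a building is always found, by the standard conditional-density argument.

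Third, handle the cemetery state, which Proposition \ref{evo_finite} does not cover. On the event that $(X_{n-1},H_{n-1})$ is known, the buildings in $]X_{n-1},X_{n-2}+R]$ are already conditioned to lie below the line of slope $T$ through $(X_{n-1},H_{n-1})$. The buildings in $]X_{n-2}+R,X_{n-1}+R]$ are unconditioned. The absence of any building in the window $]X_{n-1},X_{n-1}+R]$ should be computed from Lemma \ref{dessous_diago}, or more precisely from the void probability of the thinned process, divided by the conditioning probability. That conditioning probability was itself obtained through Lemma \ref{dessous_diago} and equals $\exp(-\lambda(\primi(H+(R-\tilde X)T)-\primi(H))/T)$.
\begin{itemize}
\item The ratio of these two quantities should give $\exp(-\lambda(R-\cdots))$ as stated.
\item Absorption of $(-\infty,-\infty,-\infty)$ is by definition of $\tau$.
\end{itemize}

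The main obstacle is the bookkeeping in the second step. The boundary term and the $\wedge$ cross terms must be assigned to the correct transition, and the index convention for $r_i$ must match the kernel's $R-\tilde{X}$. I would check it first on $N=1,2$ by computing $g_{2,R}/g_{1,R}$ explicitly. I would also verify that the total mass of $\mathrm{VIS}_R$, namely the integral of $vis$ plus the cemetery mass, equals $1$. This check would catch sign or index errors.
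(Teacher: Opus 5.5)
Your plan follows the paper's first outline: take the restricted joint law $g_{N,R}$ of Proposition \ref{evo_finite}, substitute $(\tilde x_n,\mathrm{t}_n,H_n)$, and read the kernel off $g_{n,R}/g_{n-1,R}$. The paper's proof puts its effort into the step you take as given. It establishes $g_{N,R}$ by two-sided sandwich bounds, since the ``total mass one'' argument that closes Theorem \ref{evo} fails for a restricted law. As an alternative, it uses the spatial Markov property on a stopping set, which gives the one-step kernel directly. Relying on the earlier proposition is legitimate.

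The gap is where you write ``which gives the first line of the exponent'' and ``as stated''. Done correctly, your bookkeeping does not reproduce the printed formulas, because they contain errors.

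Put $L:=R-\tilde X$. Given the past, the buildings to the right of $X_{n-1}$ form a Poisson building process whose heights on $]X_{n-1},X_{n-1}+L]$ are restricted to lie below $H+T(x-X_{n-1})$. Lemma \ref{dessous_diago} then gives, inside $-\lambda(\cdot)$,
\[\frac{\primi(H+Rt)-\primi(H)}{t}+\frac{\primi(H)-\primi(H+LT)}{T}+\frac{\primi(H+L(t\wedge T))-\primi(H)}{t\wedge T}-\frac{\primi(H+Lt)-\primi(H)}{t}.\]

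First, your own ratio yields $-\lambda\primi(H+Rt)/t$ in the log-density, so $\frac{\primi(H+Rt)-\primi(H)}{t}$ enters with a plus sign, opposite to the printed $vis$. With the printed sign, for $t\le T$ the exponential generically exceeds $1$, which is impossible for a conditional no-obstruction probability.

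Second, your void-probability ratio gives $\exp\bigl(-\lambda(R-\frac{\primi(H+LT)-\primi(H)}{T})\bigr)$. The printed cemetery mass is at most $e^{-\lambda R}$, whereas conditioning part of the window to lie under a line can only make it emptier.

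Third, the printed cross term lacks $-\primi(H)\bigl(\frac{1}{t\wedge T}-\frac{1}{t}\bigr)$, and so does the one in Proposition \ref{evo_finite}. For $t>T$ the constrained strip imposes nothing, and the exponent must reduce to $\frac{\primi(H+Rt)-\primi(H+Lt)}{t}$. Factoring the printed $g_{N,R}$ therefore imports an error, which your total-mass check would flag. It is cleaner to compute the one-step kernel directly from the conditioned process, as you already do for the cemetery; this is the paper's second outline. The Markov property is then immediate, since that conditional law depends only on $(\tilde X,T,H)$.

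Finally, in finite range the blocking building can be lower than the current one, so $t<0$ has positive probability. The Jacobian is therefore $|h-H|/t^2$ with $t\in\R$, and restricting to $t\ge 0$ loses mass. The substitution also degenerates at $h=H$ if $\mathcal{L}$ has atoms.

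With these corrections your argument proves the corrected kernel, not the one as printed.
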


\begin{proof}
We give here two possible outlines. 

On the one hand, one can start by proving Theorem \ref{evo_finite} the same way as Theorem \ref{evo}, by simply bounding the desired probability on products of intervals. The only difference is the last step, where we justified that our candidate for the density $g_n$ was indeed the wanted density since it was lower than it and had total mass one. Here, since we are not interested in a proper density, but a restriction, we cannot use such a method relying on total mass. A simple possibility is to find a lower bound for $g_{N, R}$, which we can do in a very similar fashion as in the original proof: simply consider the greatest possible angles instead of the lowest, and this is a straightforward adaptation that converges to the same limit.

On the other hand, one can also start by proving Corollary \ref{Markov_f} using the spatial Markov property for the Marked PPP on the stopping set
\[\displaystyle\bigcup_{0\geq i\geq N}\{(x, y)\in [X_i, X_i+R]\times [0, S]: y\geq X_i + \mathrm{t}_i(x-X_i)\}.\]
In the case where the supremum of the possible heights $S$ is infinite, the last set not compact, but can be made one by a mapping the possible heights to a compact in a monotonic way.
\end{proof}

\subsection{Study of the zone of eventual connection}\label{ev_co_fini}
As we saw through Remark \ref{RS_fin}, the characterization for eventual connection in finite range makes the eventual relay zone less available in this context. However, thanks to the use of mass transport principle, we can still recover information about the 

For a given landscape, one can define the relay forest in finite range the same way we did previously, by simply considering an edge between each building and its blocking building. As in the infinite range case, the stationarity of Poisson Building process with respect to the horizontal translations makes said forest unimodular when rooted at the origin under Palm measure (see Proposition \ref{unimodular}). This allows us to use mass transport principle to deduce the following result:

\begin{proposition}\label{mass_transport_save}\textbf{Length of the typical relaying zone.}

Let us consider $H$ a random variable distributed following $\mathcal{L}$. Let us further consider the landscape $\tilde{\landsc} := \landsc\cup \{(0, H)\}$. Then expectation of the length of the zone of the typical eventual relaying $\zone (0, H)$ is 
\[\e[\ell(\zone(0, H))] = \frac{1}{\lambda}\e[T],\]
where $T$ is the hitting time of the cemetery state $(-\infty, -\infty, -\infty)$ for the Markov chain $(\tilde{x}_n, \mathrm{t}_n, H_n)$ started at 
$(0, 0, +\infty)$:
\[\e[T] = \sum_{n\geq 1} \displaystyle\int_{\Half^{n+1}} g_n \delta_{(0, 0)}\displaystyle\bigotimes_{i=1}^{n} (\Leb \otimes \mathcal{L}).\]
\end{proposition}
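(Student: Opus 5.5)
The identity will come from the mass transport principle on the unimodular finite-range relay forest $\graph_0$ (Proposition \ref{unimodular}), completed by a Poisson/Campbell argument for the users. We view the zone of eventual relaying $\zone(0,H)$ through its ground projection, i.e.\ the set of ground positions $u$ (users at height $0$) whose blockage trajectory $\tau^{(n)}(u,0)$ eventually hits $(0,H)$. Then $\ell(\zone(0,H))$ is the Lebesgue measure of this set, and by Fubini
\[\e[\ell(\zone(0,H))]=\e\Bigl[\int_{\R}\mathds{1}_{\exists n:\tau^{(n)}(u,0)=(0,H)}\,\mathrm{d}u\Bigr].\]

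To apply mass transport we need users as vertices. We therefore add an independent homogeneous PPP of users of intensity $\lambda$ on the ground line, and work under the Palm measure of the superposition of buildings and users (total intensity $2\lambda$). With probability $1/2$ the root is a building, whose height is distributed as $\mathcal{L}$, and with probability $1/2$ it is a user at height $0$. The enlarged graph, with each user linked to its blocking building, is still built covariantly from a stationary marked process, so it is unimodular as in Proposition \ref{unimodular}. We use the transport
\[f(\graph,o,v)=\mathds{1}_{o\text{ is a user},\ v\text{ is a building},\ \exists n\geq 1:\tau^{(n)}(o)=v}.\]
The mass sent out by a typical user is the number of buildings on its trajectory before it dies in the cemetery state. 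The chain started from a ground user is exactly the chain $(\tilde{x}_n,\mathrm{t}_n,H_n)$ of Corollary \ref{Markov_f} started at the stated initial condition ($t_0=+\infty$, height $0$), so this count is $T$. The mass received by a typical building $(0,H)$ is the number of users in $\zone(0,H)$. The users form a Poisson process independent of the buildings, so by Campbell's formula the conditional expectation of this number given the landscape is $\lambda\,\ell(\zone(0,H))$. Equating the two sides, and noting that the factors $1/2$ and the Palm normalisations cancel because both root types have the same intensity $\lambda$, gives $\lambda\e[\ell(\zone(0,H))]=\e[T]$.

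To express $\e[T]$, write $\e[T]=\sum_{n\geq1}\p(T>n)$, up to fixing the indexing of the hitting time. The event that the chain has not yet died after $n$ steps is exactly the event that a blocking building is found in range at each of the first $n$ steps. By Proposition \ref{evo_finite}, its probability is the total mass of the restricted density $g_n$ against $\delta_{(0,0)}\bigotimes_{i=1}^n(\Leb\otimes\mathcal{L})$, which yields the displayed series.

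The main obstacle is justifying the mass transport step rigorously. This requires three checks:
\begin{itemize}
\item unimodularity of the augmented forest with two vertex types, which follows from stationarity of the joint marked process;
\item that the Palm distribution of a building root restricted to buildings is exactly the landscape $\tilde{\landsc}$ of the statement;
\item that $f$ is nonnegative and covariant, so that the identity holds even if $\e[T]=+\infty$.
\end{itemize}
The Campbell step also needs care, because the zone is a random set determined by the landscape alone; independence of the users from the landscape is what makes it valid.
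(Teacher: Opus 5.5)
Your proposal is correct and follows essentially the same route as the paper. The paper also superposes an independent Poisson process of ground users, treats them as height-zero marks that are never chosen as relays, and applies the mass transport principle to the transport from a user to the buildings that eventually relay it, obtaining $\lambda'\frac{\lambda}{\lambda+\lambda'}\e[\ell(\zone(0,H))]=\frac{\lambda'}{\lambda+\lambda'}\e[T]$ for a general user intensity $\lambda'$ rather than your choice $\lambda'=\lambda$. Your reading of $T$ as the number of buildings on the trajectory, which resolves the off-by-one you flag, is the one consistent with the paper's series $\sum_{n\geq 1}\int g_n$.
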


\begin{proof}
This comes from a simple mass transport principle argument: Let us consider $\Phi = \{x_i\}$ a Poisson point process of intensity $\lambda'>0$ on $\R$ independent from $\tilde{\landsc}$. Then, by definition,
\begin{align*}
\e[\ell(\zone(0, H))] &= \frac{1}{\lambda'}\e[\Phi\cap \zone(0, H)]\\
&= \frac{1}{\lambda'}\e[\# \{x\in \Phi: (x, 0)\text{ is eventually relayed by } (0, H)\}].
\end{align*}
However, the superposition of PPPs allows us to evaluate this quantity.

Consider the landscapes $\landsc_1$, $\tilde{\landsc}_1$ and $\tilde{\landsc}_2$ defined as follows:
\begin{align*}
    &\tilde{\landsc}_1 := \tilde{\landsc}\cup \{(x, 0): x\in \Phi\}\\
    &\tilde{\landsc}_2 := \landsc\cup \{(x, 0): x\in \Phi\}\cup \{(0, 0)\}.
\end{align*}
Clearly, $\landsc_1$ is a Poisson building process of intensity $\lambda +\lambda'$ and with height distribution
\[\frac{\lambda}{\lambda+\lambda'}\mathcal{L}+\frac{\lambda'}{\lambda + \lambda'} \delta_0.\]
Where by abuse of notation, we consider buildings with height $0$. By convention, we consider no relaying by such buildings: we consider the extension of the scheme to be
\[\tau (x, y, \landsc) = (x_b, y_b) := \underset{\substack{(a, b) \in \landsc \\a\in ]x, x+R]\\b>0}}{\text{argmax}} \frac{b-y}{a-x}.\]
As such, if $\omega$ denotes a Bernoulli random variable of parameter $\frac{\lambda}{\lambda+\lambda'}$ independent from everything we defined so far, the landscape
$$\tilde{\landsc'}:=\left\{
    \begin{array}{ll}
        \tilde{\landsc}_1 & \mbox{if } \omega = 1 \\
        \tilde{\landsc}_2 & \mbox{if }\omega=0
    \end{array}
\right.$$
follows the Palm measure of the Poisson Building process $\landsc'$. As such, the relay forest is unimodular when rooted in $(0, \omega H)$. Moreover, for any real function $f$ covariant with rooted graph isomorphisms
\begin{align*}
    \e[f(\mathcal{G}( \tilde{\landsc}'))]  &= \frac{\lambda}{\lambda+\lambda'}\e[f(\mathcal{G}( \tilde{\landsc}_1))] + \frac{\lambda'}{\lambda+\lambda'}\e[f(\mathcal{G}( \tilde{\landsc}_2))].
\end{align*}
Using the mass transport principle, we get on the one hand:
\begin{align*}
    \e[\ell(\zone(0,\omega H, \tilde{\landsc}'))] &= \e\left[\sum_{(x, h)\in \tilde{\landsc}'} \mathds{1}_{h = 0, (x, h) \text{ is eventually relayed by } (0, \omega H)}\right]\\
    &= \frac{\lambda}{\lambda+\lambda'}\e\left[\sum_{(x, h)\in \tilde{\landsc}_1} \mathds{1}_{h = 0, (x, h) \text{ is eventually relayed by } (0,  H)}\right]+ 0
\end{align*}
and on the other hand:
\begin{align*}
    \e\left[\sum_{(x, h)\in \tilde{\landsc}'} \mathds{1}_{h = 0, (x, h) \text{ is eventually relayed by } (0, \omega H)}\right]&= \e\left[\sum_{(x, h)\in \tilde{\landsc}'} \mathds{1}_{\omega H= 0, (0, 0) \text{ is eventually relayed by } (x, h)}\right]\\
    &= 0+ \frac{\lambda'}{\lambda+\lambda'}\e\left[\sum_{(x, h)\in \tilde{\landsc}_2} \mathds{1}_{(0, 0) \text{ is eventually relayed by } (x,  h)}\right].
\end{align*}
However, under our scheme, we can rewrite:
\[\e\left[\sum_{(x, h)\in \tilde{\landsc}_2} \mathds{1}_{(0, 0) \text{ is eventually relayed by } (x,  h)}\right] = \e\left[\sum_{(x, h)\in \landsc} \mathds{1}_{(0, 0) \text{ is eventually relayed by } (x,  h)}\right] = \e[T].\]
Furthermore
\[\e\left[\sum_{(x, h)\in \tilde{\landsc}_1} \mathds{1}_{h = 0, (x, h) \text{ is eventually relayed by } (0,  H)}\right] = \e[\# \{x\in \Phi: (x, 0)\text{ is eventually relayed by } (0, H) \text{ in }\tilde{\landsc}\}].\]

Which finally leads to 
\[\lambda' \frac{\lambda}{\lambda+\lambda'} \e[\ell(\zone(0, H))] = \frac{\lambda'}{\lambda+\lambda'} \e[T].\]
Which is exactly the announced result.
\end{proof}

\begin{remark}
Here, the adaptation of the scheme to zero height buildings is not necessary. A way to make it work otherwise is to extend marks to a product space with a second component indicating whether the point is a building or a user. This allows one to extend the result to both the case with regular $0$ height building, and to get the span of the connection zone at any given height, at the cost of heavier notation. This of course leads to the following corollary:
\end{remark}
 \begin{corollary}\textit{Area of the typical relaying zone.}\\
 For $h\geq 0$, let $T_h$ be the hitting time of the cemetery state $(-\infty, -\infty, -\infty)$ for the Markov chain $(\tilde{x}_n, \mathrm{t}_n, H_n)$ started at 
$(0, h, +\infty)$. The expected area of the zone of eventual relaying $\zone (0, H)$ is equal to 
\[\int_{h\geq 0}\frac{1}{\lambda}\e[T_h]dh.\]
 \end{corollary}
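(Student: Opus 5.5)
The approach is to write the area of $\zone(0,H)$ as an integral over heights of the horizontal lengths of its horizontal slices. Then each slice length is handled by the mass transport argument of Proposition \ref{mass_transport_save}, with the ground users placed at height $h$ instead of $0$.

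For $h\geq 0$, let $\ell_h(\zone(0,H))$ denote the Lebesgue measure of $\{x\in\R : (x,h)\in\zone(0,H)\}$. By Tonelli,
\[\e[\mathrm{Area}(\zone(0,H))]=\int_{h\geq 0}\e[\ell_h(\zone(0,H))]\,dh.\]
It therefore suffices to show that $\e[\ell_h(\zone(0,H))]=\frac{1}{\lambda}\e[T_h]$ for each fixed $h$.

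To do this, I would repeat the proof of Proposition \ref{mass_transport_save}, following the preceding remark. Let $\Phi$ be an independent PPP of intensity $\lambda'$. Its points are taken as users at height $h$, and marks are extended to a product space with a second component saying whether a point is a building or a user. Users relay nobody: the argmax in $\tau$ runs only over buildings. Users are still relayed according to $\tau$ from their position $(x,h)$.

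The superposition of buildings and users is a stationary marked PPP of intensity $\lambda+\lambda'$ with mixed mark law. Under its Palm measure, the root is a building $(0,H)$ with probability $\frac{\lambda}{\lambda+\lambda'}$, and a user $(0,h)$ with probability $\frac{\lambda'}{\lambda+\lambda'}$. Proposition \ref{unimodular}, which uses only stationarity and covariance of the scheme, then gives unimodularity of the rooted relay forest.

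Next, apply the mass transport principle to $f(\graph,u,v)=\mathds{1}_{u\text{ is a user eventually relayed by the building }v}$.
\begin{itemize}
\item Mass received at a building root: $\lambda'\,\e[\ell_h(\zone(0,H))]$, by Campbell's formula for $\Phi$ on the slice at height $h$. Mass received at a user root is $0$.
\item Mass sent from a user root: the expected number of buildings eventually relaying $(0,h)$. This is the number of non-cemetery steps of the blockage chain of Corollary \ref{Markov_f} started from the user at height $h$, i.e. $\e[T_h]$. Mass sent from a building root is $0$.
\end{itemize}
Equating the two sides gives $\frac{\lambda}{\lambda+\lambda'}\lambda'\e[\ell_h]=\frac{\lambda'}{\lambda+\lambda'}\e[T_h]$, hence $\e[\ell_h]=\e[T_h]/\lambda$, and integrating over $h$ concludes.

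The main obstacle is making sure that adding users at height $h$ does not alter the building-to-building relay structure. This holds because users are excluded from the argmax, and it is what lets $T_h$ be identified with the hitting time of the chain of Corollary \ref{Markov_f} in the pure building landscape $\landsc$. A second check is that the starting state of the chain encodes a start at height $h$ with no prior constraint, namely an infinite previous slope and no prior hop, as in the $h=0$ case. Measurability of $(h,\omega)\mapsto\ell_h$, needed for Tonelli, is routine since $\zone(0,H)$ is a measurable subset of $\Half$.
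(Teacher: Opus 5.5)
Your proposal is correct and follows the paper's route. The paper obtains this corollary from the remark after Proposition \ref{mass_transport_save}: product marks (building/user) place users at an arbitrary height $h$, the mass transport argument is rerun to show that the horizontal span of $\zone(0,H)$ at height $h$ has mean $\frac{1}{\lambda}\e[T_h]$, and the result is integrated over $h$. Your Tonelli slicing, your reading of the starting state (zero previous hop, infinite previous slope, height $h$), and your identification of $T_h$ with the number of buildings in the relay chain all match the conventions used in Proposition \ref{mass_transport_save}.
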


\subsection{General Range}

As we saw, under a finite horizontal range, we were able to recover some computational results, mainly due the similar definition of blockage. Indeed, a crucial property we assumed was the fact that signal can only be blocked by buildings the tops of which are visible. In the following, this is not necessarily the case (see Figure \ref{block_chiant}). As such, while our methods still apply, they fail to provide as tractable results. 

\begin{definition}\textbf{General Range.}

We call \textit{range} a convex compact subset $\mathrm{R}$ of $\R^ 2$ that is symmetric ($x\in R \leftrightarrow (-x)\in \R$). For $x_1, x_2$ two points of $\Half$, we say that $x_2$ of  is in range from $x_1$ if
\[x_2 \in \mathrm{R}+x_1.\]
\end{definition}

In this context, one needs to be more careful when defining the blocking building: indeed, buildings that are not in range can still block the signal, leading to a loss of line of sight connection:

\begin{figure}[h]
            \centering
            \includegraphics[width=0.75\textwidth]{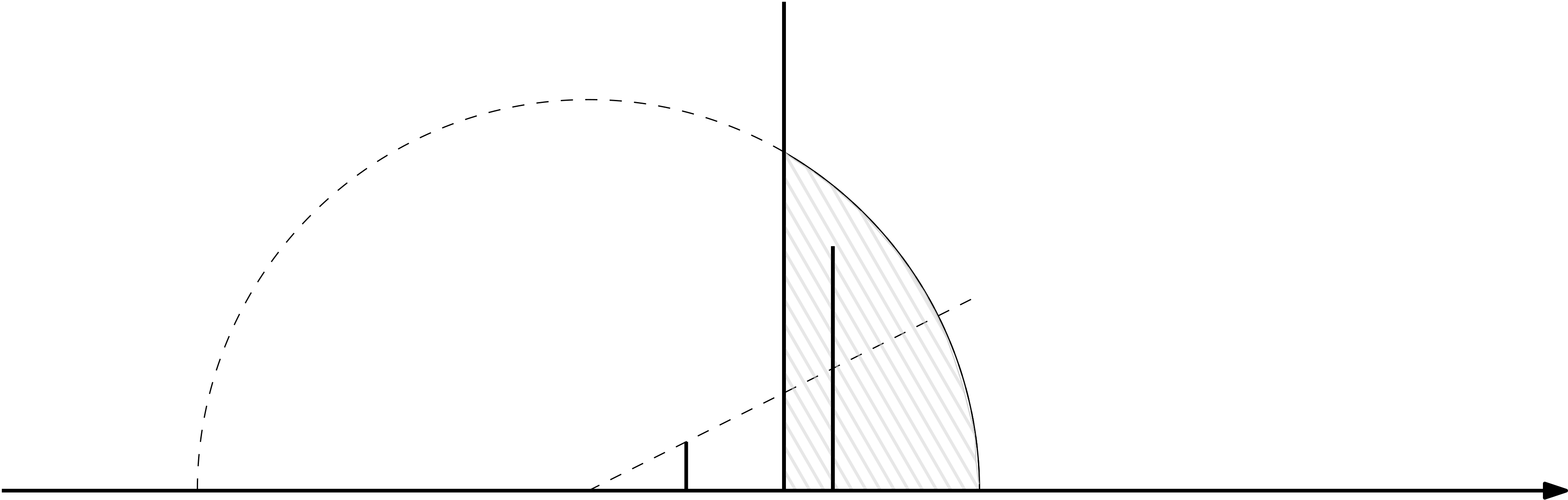}
            \caption{\label{block_chiant}In the case of Euclidian balls, the presence of tall buildings outside of the ball can prevent relaying further right. Here, the blockage angle is lower than the angle between the origin and a building in range due to this phenomenon.}
\end{figure}

\begin{definition}\textbf{Visibility, relaying scheme.}

Let $\landsc$ be a landscape.

Let $(a_1, b_1)$ and $(a_2, b_2)$ be two buildings of $\landsc$. We say that $(a_2, b_2)$ is visible from $(a_1, b_1)$ if it is in range and there are no buildings above the segment from $(a_1, b_1)$ to $(a_2, b_2)$. In other words, if the following conjunction of events holds true:
\[\{(a_2, b_2)\text{ is in range from } (a_1, b_1)\} \cap \below{[a_1, a_2]}{(a_1, b_1)}{\frac{b_2-b_1}{a_2-a_1}}.\]

Thanks to this, we can define the relaying scheme $\tau$ as
\[\tau_{\mathrm{R}} (x, y, \landsc) = (x_b, y_b) := \underset{\substack{(a, b) \in \landsc\cap \mathrm{R}+(x, y) \\(a, b) \text{ visible from }(x, y)}}{\text{argmax}} \frac{b-y}{a-x}.\]
When this is well defined, and $\tau (x, y, \landsc) = (-\infty, -\infty)$ otherwise.

The blockage angle and its tangent are defined as usual in this context
\end{definition}

Due to this condition of visibility, the computation distribution of the blocking building needs more care.
In order to account for this complication, let us first give an alternative characterization of the blocking building of a given point of $\Half$:

\begin{definition}\textbf{Stoppage Point.}

Let $\landsc$ be a given landscape.
Let $(x, y)\in\Half$ be a point. We define its stoppage point as the $x-coordinate$ starting from which line of sight from $(x, y)$ is cut off for all altitudes.
\[X_{stop}(x, y) := (\min\{a\in [x, +\infty[: \exists (a, b)\in \landsc, \forall (a, c)\in \mathrm{R}+(x, y), c<b\})\wedge (\max\{a : \exists (a, b)\in \mathrm{R}+(x, y)\}).\]
We can also consider the stoppage \textit{height} $X_{stop}(x, y)$ defined as either the height of the building inducing stoppage when it exists or $-\infty$ when it does not.
\end{definition}

In a similar way to what we noticed for the reverse-blockage in Proposition \ref{length}, there are two possibilities for the position of the stoppage point: either it is the base of a tall building which top is out of range, blocking further transmission, or it is simply the $x-$coordinate of the rightmost point in range. This result in the distribution of the stoppage point having two parts: one with density with respect to Lebesgue measure and the other having positive mass in one point. It can be seen through the following lemma:

\begin{lemma}\textbf{Distribution of the Stoppage Point.}\label{dist_stop}

For $a\in \R$, let us note the $h_a(\mathrm{R})$ highest point in $\mathrm{R}$ above $a$:
\[h_{\mathrm{R}}(a):= \sup \{b: (a, b)\in \mathrm{R}\}.\]
In the last definition we can for instance adapt the convention that the supremum of an empty set is minus infinity.

Let $\landsc$ be a given random landscape.\\
Then, for $(x, y) \in \Half$ the stoppage point $X_{stop}(x, y)$ of $(x, y)$ in $\landsc$ has its CDF given by the following reformulation of events:
\[\{X_{stop}(x, y) \geq t\} =  \mathrm{below}([x, t],h_{\mathrm{R}+(x, y)})\cap \{t\leq\max\{a : \exists (a, b)\in \mathrm{R}+(x, y)\}\} .\]

When $\landsc$ is a Poisson Building Process of intensity $\lambda$ and height distribution $\mathcal{L}$, Lemma \ref{dessous_diago} yields the following:
\[\p (X_{stop}(x, y) \geq t) = \exp \left(\int_{x}^{t} -\lambda \mathcal{L}(]h_{\mathrm{R}+(x, y)}(a), \infty[) \rm{d}a \right)\mathds{1}_{t \leq \max\{a : \exists (a, b)\in \mathrm{R}+(x, y)\}} .\]
In other words, $X_{stop}(x, y)$ has its law equal to
\[g\times\Leb_{|[x,\max\{a : \exists (a, b)\in \mathrm{R}+(x, y)\}]}+ \ \exp \left(-\lambda\bigintsss_{\hspace{-10pt}x}^{\max\{a : \exists (a, b)\in \mathrm{R}+(x, y)\}} \hspace{-90pt} \mathcal{L}(]h_{\mathrm{R}+(x, y)}(a), \infty[) \rm{d}a \right)\delta_{\max\{a : \exists (a, b)\in \mathrm{R}+(x, y)\}},\]
where 
\[g: u\mapsto \lambda \mathcal{L}([h_{\mathrm{R}+(x, y)}(u), +\infty[)\exp\left(-\lambda \int_{x}^{u}\mathcal{L}(h_{\mathrm{R}+(x, y)}(a), +\infty[ \rm{d}a\right).\]

Moreover, conditioned on the stoppage point, the stoppage height is distributed according to $\mathcal{L}$ conditioned on being greater that $h_{\mathrm{R}+(x, y)}(X_{stop}(x, y))$.
\end{lemma}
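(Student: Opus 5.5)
The plan is to prove the event identity first, deterministically, and then read off the law from Lemma \ref{dessous_diago}. Write $M := \max\{a : \exists (a, b)\in \mathrm{R}+(x, y)\}$ for the rightmost abscissa in range. By definition, $X_{stop}(x, y)$ is the minimum of two quantities: $M$, and the first base $a\geq x$ of a building $(a, b)$ whose top lies strictly above the whole vertical fiber $\{(a, c)\in \mathrm{R}+(x, y)\}$. Since $\mathrm{R}$ is convex and compact, this fiber is a compact vertical segment, so the condition is simply $b > h_{\mathrm{R}+(x, y)}(a)$. Hence, for $t\leq M$, $X_{stop}(x, y)\geq t$ holds exactly when no building with base in $[x, t[$ has height exceeding $h_{\mathrm{R}+(x, y)}$ at its base. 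This is the event $\mathrm{below}([x, t], h_{\mathrm{R}+(x, y)})$, up to the endpoint $t$, which carries no building almost surely. For $t>M$ the event is empty because $X_{stop}\leq M$, which accounts for the indicator.

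In the Poisson case I will apply Lemma \ref{dessous_diago} with $A = [x, t]$ and $f = h_{\mathrm{R}+(x, y)}$. Two points need care here.
\begin{itemize}
    \item The function $f$ is measurable because it is upper semicontinuous, being the top boundary of a compact convex set.
    \item $f$ may be negative or equal to $-\infty$ where the fiber does not meet $\Half$. Positivity in the lemma is only used to make the retention probability meaningful. With the convention $\mathcal{L}(]f(a), \infty[) = 1$ when $f(a) < 0$, the thinning argument goes through unchanged: every building there counts as blocking.
\end{itemize}
This yields the stated survival function. The law then decomposes as follows.
\begin{itemize}
    \item On $[x, M[$, the survival function is absolutely continuous in $t$. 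Differentiating the exponential gives the density $g$, by the fundamental theorem of calculus for Lebesgue integrals, valid almost everywhere.
    \item The remaining mass $\p(X_{stop}\geq M)$ sits at $M$ as an atom. It equals the exponential evaluated at $t = M$.
\end{itemize}
The lemma writes $\mathcal{L}([h,\infty[)$ in one place and $\mathcal{L}(]h,\infty[)$ in another. I expect these to coincide almost everywhere in $u$ whenever $\mathcal{L}$ has countably many atoms and $h_{\mathrm{R}}$ is non-constant. Where $h_{\mathrm{R}}$ is flat at an atom, I will take the strict inequality as the correct version.

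For the conditional height, I will use the thinning decomposition. Buildings with base in $[x, M]$ split into two independent Poisson processes: blocking ones, with $H > f(X)$, of intensity $\lambda\mathcal{L}(]f(a),\infty[)\,da$, and non-blocking ones. The stoppage point on $\{X_{stop}<M\}$ is the first atom of the blocking process. Conditioned on its position $u$, the mark of that atom is distributed as $\mathcal{L}$ conditioned on $]f(u),\infty[$. This follows from the independent-marks/Palm property: the marked process restricted to blocking points has i.i.d. marks with the conditioned law, independent of positions.

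The main obstacle is making this last step rigorous. One route is a direct infinitesimal computation, as in Theorem \ref{evo}: compute $\p(X_{stop}\in[u,u+\epsilon],\,H\in B)$ and let $\epsilon\to 0$. The cleaner route is to invoke the marking theorem for the thinned process. I would take the latter, and treat the case $X_{stop}=M$ separately by the $-\infty$ convention.
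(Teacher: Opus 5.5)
Your proposal is correct and follows the paper's route. The paper gives no proof beyond the argument built into the statement: rewrite $\{X_{stop}(x,y)\geq t\}$ as a $\mathrm{below}$ event, apply Lemma \ref{dessous_diago}, and read off a density on $[x,M[$ plus an atom at $M$. It leaves the conditional stoppage-height claim unjustified, and your marking-theorem argument fills that gap. Your corrections are also right:
\begin{itemize}
\item the interval should be $[x,t[$, which is harmless almost surely;
\item $g$ should use $\mathcal{L}(]h_{\mathrm{R}+(x,y)}(u),\infty[)$ to match the strict inequality in the definition, and this matters only where the concave top boundary of the range is flat at an atom of $\mathcal{L}$;
\item the conditional-height claim holds only on $\{X_{stop}(x,y)<M\}$, since the atom at $M$ carries stoppage height $-\infty$.
\end{itemize}
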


In the specific case of Poisson building processes, this knowledge of the law of the stoppage point is useful: similarly as in Corollary \ref{typical}, the spatial Markov property ensures that the buildings in $x+\mathrm{R}\cap [x, X_{stop}(x, y)]\times \R_+$ form a Poisson point process of intensity $\lambda\times \mathcal{L}$.

\begin{remark}
In the following, we will use the more general formulation for building processes that are inhomogeneous $2$D Poisson Point processes of intensity $\lambda(x)\rm{d}x\otimes \mathcal{L}(x)$, which is a direct consequence of the lemma. More precisely, it will be useful in the case of Poisson building processes conditioned to have the tops in a given area of the half plane $\Half$ :
\[\p (X_{stop}(x, y) \geq t) = \exp \left(\int_{x}^{t} -\lambda(a) \mathcal{L}(a)(]h_{\mathrm{R}+(x, y)}(a), \infty[) \rm{d}a \right)\mathds{1}_{t \leq \max\{a : \exists (a, b)\in \mathrm{R}+(x, y)\}} .\]
\end{remark}

Notice that one can reformulate the blocking building of a given point $(x, y)$ as follows:
\[\tau_{\mathrm{R}} (x, y, \landsc) = (x_b, y_b) := \underset{\substack{(a, b) \in \landsc\cap \mathrm{R}+(x, y) \\a<X_{stop}(x, y)}}{\text{argmax}} \frac{b-y}{a-x}.\]
Thanks to this observation, in the case of Poisson building processes, we are now able to give computational results on the distribution of blockage in finite range.

\begin{proposition}\textbf{Distribution of the blocking building: general finite range.}\label{evo_finite_gen}

For $x_1< x_2\in \R$, $h_1, h_2\in \R_+$, and $t_1 = \frac{h_2-h_1}{x_2-x_1}$ let us denote
\begin{align*}
    \placehold((x_1, h_1), (x_2, h_2)) : =\bigintsss_{\hspace{-5pt}x_2}^{\max\{a : \exists (a, b)\in \mathrm{R}+(x_1, h_1)\}}\hspace{-100pt}\lambda \mathcal{L}([h_{\mathrm{R}+(x_1, h_1)}&(u), +\infty[)\\
&\exp\left(-\lambda \int_{x_2}^{u}\mathcal{L}([(h_2+(a-x_2)t_1)\wedge h_{\mathrm{R}+(x_1, h_1)}(a), +\infty[ \rm{d}a\right)\rm{d}u \\
+ \exp\Biggl(-\lambda&\bigintsss_{\hspace{-5pt}x_2}^{\max\{a : \exists (a, b)\in \mathrm{R}+(x_1, h_1)\}}\hspace{-100pt} \mathcal{L}([(h_2+(a-x_2)t_1)\wedge h_{\mathrm{R}+(x_1, h_1)}(a), +\infty[\rm{d}a\Biggl).
\end{align*}
Then for $(X, H)\in \Half$ a given point, the restriction of the law of $(x_b, y_b)$ to $\Half$, or in other words, the restriction of the law of the blocking building to the event of its existence has density with respect to the product measure $\Leb \otimes \mathcal{L}$, which is given by
\begin{align*}
g_{\mathrm{R}}: (x, h)_{0\leq n\leq N} \mapsto \lambda &\exp\Biggl(-\lambda\frac{(\primi(h)-\primi(H))(x-X)}{h-H}\Biggl) \placehold((X, H), (x, h)) \mathds{1}_{(x, h)\in \mathrm{R}+(X, H)},
\end{align*}
where, if $h=H$, we consider the continuous extension $\frac{(\primi(h)-\primi(H))}{h-H} = (1-F(h))$.
\end{proposition}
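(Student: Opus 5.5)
We follow the infinitesimal method of Theorem \ref{evo}. We estimate the probability that the blocking building of $(X,H)$ falls in a small box $[x,x+\epsilon]\times[h,h+\eta]$ with $(x,h)\in \mathrm{R}+(X,H)$. Using the reformulation $\tau_{\mathrm{R}}(X,H)=\mathrm{argmax}$ over buildings in range with base before $X_{stop}(X,H)$, this event splits into three conditions:
\begin{enumerate}
\item there is exactly one building in the box;
\item every building with base in $]X,x[$ lies below the line of slope $t=\frac{h-H}{x-X}$ through $(X,H)$;
\item to the right of $x$, every building that could be a better candidate is excluded.
\end{enumerate}
Condition (3) requires more care. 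To the right of $x$, either line of sight is cut before any building top rises above the line, or every building in range stays below the continuation of that line. The line stays below $h_{\mathrm{R}+(X,H)}$ until it exits the range.

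The first two conditions live on disjoint parts of the line, so they are independent by the spatial Markov property. Their probabilities are handled as in Theorem \ref{evo}. Condition (1) contributes $\lambda\,\mathrm{d}x\,\mathrm{d}\mathcal{L}(h)$. For condition (2), Lemma \ref{dessous_diago} with the linear function $a\mapsto H+t(a-X)$ on $[X,x]$ gives
\[
\exp\left(-\frac{\lambda}{t}\bigl(\primi(h)-\primi(H)\bigr)\right)=\exp\left(-\lambda\frac{(\primi(h)-\primi(H))(x-X)}{h-H}\right).
\]
Here the continuous extension at $h=H$ appears naturally. The indicator $\mathds{1}_{(x,h)\in\mathrm{R}+(X,H)}$ encodes that the candidate must be in range.

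The main step is to identify the probability of condition (3) with $\placehold((X,H),(x,h))$. We decompose according to the stoppage point of the process restricted to $]x,+\infty[$, conditioned on condition (3). On $]x,+\infty[$ the relevant constraint at abscissa $a$ is that no building has height at least $(h+(a-x)t)\wedge h_{\mathrm{R}+(X,H)}(a)$, except possibly the one building that realises the stoppage.
\begin{itemize}
\item If a building exceeds $h_{\mathrm{R}+(X,H)}(u)$ at abscissa $u$, line of sight is cut at $u$. Everything further right is then irrelevant. Before $u$, all buildings must stay below the minimum above.
\item If no such building exists before the right end of the range, the minimum constraint holds on the whole interval.
\end{itemize}
We apply the inhomogeneous form of Lemma \ref{dist_stop}, as in the remark following it, with this minimum as the effective threshold. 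The first case gives the integral term of $\placehold$: the intensity $\lambda\mathcal{L}([h_{\mathrm{R}}(u),\infty[)$ times the survival exponential. The second case gives the atom term.

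We expect this step to be the main obstacle. One has to check carefully that a building between the line and $h_{\mathrm{R}}$ is exactly a forbidden better candidate. One must also check that a building above $h_{\mathrm{R}}$ blocks every later candidate and yet is not itself a candidate, because its top is out of range. This depends on the convexity of $\mathrm{R}$, which makes the vertical sections intervals and keeps the boundary behaviour tame.

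Finally, we multiply the three independent factors. The errors from $\epsilon,\eta$ are controlled by the same monotone sandwich and Riemann-sum argument as in Theorem \ref{evo}. In particular, the maximal and minimal slopes over the box converge to $t$, and all the expressions involved are continuous in the parameters. Since we only claim a restriction of the law to $\Half$, rather than a probability density, we cannot close with the total-mass argument. Instead we prove matching lower and upper bounds on each box, as suggested in the proof of Corollary \ref{Markov_f}, and this yields the density $g_{\mathrm{R}}$.
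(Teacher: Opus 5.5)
Your proposal follows essentially the paper's own route. The paper writes $\{\tau_{\mathrm{R}}(X,H)=(x,h)\}$, with $t=\frac{h-H}{x-X}$, as the conjunction of four events: a building at $(x,h)\in\mathrm{R}+(X,H)$, the event $\below{[X,x[}{(X,H)}{t}$, the event $X_{stop}(X,H)>x$ (automatic by convexity of $\mathrm{R}$), and the event $\below{]x,X_{stop}(X,H)[}{(x,h)}{t}$. It then applies the spatial Markov property in that order, together with Lemma \ref{dist_stop}. Your condition (3) merges the last two events. Your box-and-sandwich argument makes rigorous the density that the paper simply reads off.

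The check you single out as the main obstacle does not go through in full generality, and the paper's last event has the same defect. Put $\underline{h}(a):=\inf\{c:(a,c)\in\mathrm{R}+(X,H)\}$. Suppose the continued line $a\mapsto h+(a-x)t$ leaves $\mathrm{R}+(X,H)$ through its lower boundary. This happens, for instance, for a Euclidean ball of radius $r$ when $H>0$ and $h<H$, on an interval just left of $X+r$. A building whose top lies in $]h+(a-x)t,\underline{h}(a)[$ is then out of range, so it is not a candidate. By convexity it also lies below every segment from $(X,H)$ to an in-range point, so it blocks nothing. Yet the threshold $(h+(a-x)t)\wedge h_{\mathrm{R}+(X,H)}(a)$ in $\placehold$ forbids it.

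The correct effective threshold is $h_{\mathrm{R}+(X,H)}(a)\wedge\max\bigl(h+(a-x)t,\underline{h}(a)\bigr)$. So both the formula and your identification of condition (3) with $\placehold$ are exact only when the continued line never passes below the range, for example a Euclidean ball with $h\geq H$.
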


\begin{proof}
This is simply a reformulation of the definition of blockage:

The event
\[\tau_{\mathrm{R}}(X, H) = (x, h)\leftrightarrow (x, h) = \underset{\substack{(a, b) \in \landsc\cap \mathrm{R}+(x, y) \\a<X_{stop}(x, y)}}{\text{argmax}} \frac{b-y}{a-x}\]
is equivalent to the conjunction of the following events:
\begin{enumerate}
    \item $(x, h)$ is in $\landsc\cap\mathrm{R}+(X, H)$.
    \item The event $\below{[X, x[}{(X, H)}{\frac{h-H}{x-X}}$.
    \item The stoppage point $X_{stop}(X, H)$ of $(X, H)$ is after $x$.
    \item The event $\below{]x, X_{stop}(X, H)[}{(x, h)}{\frac{h-H}{x-X}}$.
\end{enumerate}
Then, one simply has to use the spatial Markov property in the order of these events:
\begin{enumerate}
    \item The presence of the building yields the indicator as well as the density part.
    \item $\below{[X, x[}{(X, H)}{\frac{h-H}{x-X}}$ yields the part $exp\Biggl(-\lambda\frac{(\primi(h)-\primi(H))(x-X)}{h-H}\Biggl)$.
    \item Conditional on the previous event, $\landsc$ is a Poisson point process of intensity $\lambda\Leb \otimes \mathcal{L}$ in $\Half\cap \R_{> x_2}\times \R_+$, independent from the first two events. Moreover, the stoppage point cannot be before $x$ (from convexity of the range $\mathrm{R}$). As such, we can use Lemma \ref{dist_stop} to find its position.
    \item Conditioned on the stoppage point, the buildings between $x$ and $X_{stop}(X, H)$ form a Poisson point process of intensity $\lambda\Leb \otimes \mathcal{L}$ in $\mathrm{R}+(X, H)\cap ]x, X_{stop}(X, H)[\times \R_+$ independent from the two last steps. This, together with the distribution of $X_{stop}(X, H)$, yield the part $\placehold((X, H), (x, h))$.
\end{enumerate}
\end{proof}

Now, while the distribution of the blocking building is well understood, we have no simple formula for the joint evolution as we had through Proposition \ref{evo_finite} or Theorem \ref{evo}. This, as the absence of direct Markovian dynamics, can be seen as a consequence of the absence of decreasing characteristic of the shade. 

As such, while in the case of stationary processes, all the results of Subsection \ref{ev_co_fini} still apply, we lack the computational results to leverage them.
\section*{Conclusion}

In this paper, we generalized the model of \cite{Junse_Francois}, giving precise computational results as well as structural properties for the evolution of visibility from a user, and for the number of connections to a given relay. Through this study, we highlighted two key properties: On the one hand, the framework of \textit{Poisson building processes}, thanks to the spatial Markov property, allows us to derive detailed computational results, thanks to the absence of correlation \textemdash \ and independently of the stationarity, as our methods straightforwardly adapt to inhomogeneous point processes, up to keeping in memory the position of the blocking building if one wants to keep the Markovian characteristic in Corollary \ref{Markov}. On the other hand, the stationarity of the process of buildings and the equivariant characteristic of our connection scheme allows us to use the mass transport principle, tying our relaying schemes to the theory of \textit{Eternal Family Trees}, immediately yielding structural properties and allowing us to have access to estimates of the backwards dynamic through the knowledge of the forward one, such as in Proposition \ref{mass_transport_save}.

When looking at both properties at the same time, that is our framework of \textit{homogeneous Poisson Building process}, we end up with a computationally well understood Eternal Family Tree, which we called the \textit{Relay EFT}. Leveraging this computational framework, further studies of this object for its belonging to the class of unimodular point shifts might be envisioned, for instance a study of its fractal dimension (as defined in \cite{unimod_dim}) depending on the height distribution $\mathcal{L}$, though such studies fall outside of the scope of present article.

Lastly, we showed the limits of our methods in the general range case, where we can only recover partial information. Further studies could focus on a higher dimensional case, though the finite range behaviour is expected to greatly vary, as does the behaviour of continuum percolation.
\newpage
\printbibliography
\end{document}